\numberwithin{equation}{section}
\newtheorem{theorem}{Theorem}[section]
\newtheorem{corollary}[theorem]{Corollary}
\newtheorem{lemma}[theorem]{Lemma}
\newtheorem{proposition}[theorem]{Proposition}
\theoremstyle{definition}
\newtheorem{remark}[theorem]{Remark}
\theoremstyle{definition}
\theoremstyle{definition}
\newtheorem{assumption}[theorem]{Assumption}
\newcommand{\bR}{\mathbb{R}}
\newcommand\cH{\mathcal{H}}
\newcommand\cX{\mathcal{X}}
\newcommand\Div{\operatorname{div}}
\providecommand{\set}[1]{\{#1\}}
\providecommand{\norm}[1]{\lVert#1\rVert}
\begin{document}
\title[Stokes systems with VMO coefficients]{Boundary Lebesgue mixed-norm estimates for non-stationary Stokes systems with VMO coefficients}

\author[H. Dong, D. Kim, and T. Phan]{Hongjie Dong, Doyoon Kim, and Tuoc Phan}

\address[H. Dong]{Division of Applied Mathematics, Brown University,
182 George Street, Providence, RI 02912, USA}
\email{\href{mailto:Hongjie\_Dong@brown.edu}{\nolinkurl{Hongjie\_Dong@brown.edu}}}
\address[D. Kim]{Department of Mathematics, Korea University, 145 Anam-ro, Seongbuk-gu, Seoul 02841, Republic of Korea}
\email{\href{mailto:doyoon_kim@korea.ac.kr}{\nolinkurl{doyoon_kim@korea.ac.kr}}}
\address[T. Phan]{Department of Mathematics, University of Tennessee, 227 Ayres Hall, 1403 Circle Drive, Knoxville, TN 37996, USA}
\email{phan@math.utk.edu}
\thanks{H. Dong was partially supported by the NSF under agreement DMS-1600593; D. Kim was supported by the National Research Foundation of Korea (NRF) grant funded by the Korea government (MSIT) (2019R1A2C1084683); T. Phan is partially supported by the Simons Foundation, grant \#354889.}

\subjclass[2010]{76D03, 76D07, 35K51, 35B45}

\keywords{time-dependent Stokes system, boundary Lebesgue mixed-norm  estimates}

\begin{abstract}
We consider Stokes systems with measurable coefficients and Lions-type boundary conditions.
We show that, in contrast to the Dirichlet boundary conditions, local boundary mixed-norm $L_{s,q}$-estimates hold for the spatial second-order derivatives of solutions, assuming the smallness of the mean oscillations  of the coefficients with respect to the spatial variables in small cylinders.
In the un-mixed norm case with $s=q=2$, the result is still new and provides local boundary Caccioppoli-type estimates.
The main challenges in the work arise from the lack of regularity of the pressure and time derivatives of the solutions and from interaction of the boundary with the nonlocal structure of the system. To overcome these difficulties, our approach relies heavily on several newly developed regularity estimates for both divergence and non-divergence form parabolic equations with coefficients that are only measurable in the time variable and in one of the spatial variables.
\end{abstract}

\maketitle

\section{Introduction and main results}

In this paper, we investigate local boundary mixed-norm $L_{s,q}$-estimates for solutions to time-dependent Stokes systems.
In particular, we show that for time-dependent Stokes systems with the Lions boundary conditions (see  \cite{Li69,MR1422251} and \eqref{bdr-cond} below), the local boundary $L_{s,q}$-estimates for the solutions hold. Our results are established for a general class of Stokes systems in non-divergence form with measurable coefficients, so they could therefore be useful, for example, for studying flows of inhomogeneous fluids with density-dependent viscosity \cite{CKST, MR2092995}.  Precisely, we investigate the following Stokes system:
\begin{equation}  \label{eq7.41c}
                      u_t-a_{ij}(t, x)D_{ij} u+\nabla p=f,\quad \Div u= g \quad \text{in} \  Q_1^+,
\end{equation}
with the Lions boundary conditions on $\{x_d= 0\}$:
\begin{equation} \label{bdr-cond}
D_d u_k = u_d = 0 \quad \text{on} \  (-1, 0] \times B_1' \times \{0\}, \quad k=1,2,\ldots, d-1.
\end{equation}
The Lions boundary conditions are a special case of the Navier (or slip) boundary conditions introduced in \cite{Na27}.
In the above equations, $Q_1^+$ is the unit upper half-parabolic cylinder and $B_1'$ is the unit ball in $\bR^{d-1}$. See Section \ref{sec2.1} for their definitions. In \eqref{eq7.41c},
 $$u= (u_1(t,x), u_2(t,x), \ldots, u_d(t,x))  \in \bR^d, \quad \text{where} \quad (t,x) \in  Q_1^+,$$
is an unknown vector-valued function representing the velocity of the considered fluid,  $p = p(t,x)$ is an unknown fluid pressure,  $f = (f_1, f_2, \ldots, f_d)$ is a given measurable vector-valued function, and $g = g(t,x)$ is a given measurable function.
In addition, $a_{ij} = a_{ij}(t,x)$ is a given measurable symmetric matrix of the viscosity coefficients. Throughout the paper, we assume that  $a_{ij}$ satisfies the following boundedness and ellipticity conditions with the ellipticity constant $\nu \in (0,1)$: for a.e. $(t,x)$,
\begin{equation} \label{ellipticity}
\nu |\xi|^2 \leq a_{ij}(t,x) \xi_i \xi_j \quad \text{and} \quad |a_{ij}| \leq \nu^{-1} \quad \text{for} \  \xi = (\xi_1, \xi_2, \ldots, \xi_d) \in \bR^d.
\end{equation}
 As a regularity assumption on the coefficients,  we impose the following vanishing mean oscillation in $x$ (VMO$_x$) condition on $a_{ij}$, which was introduced in \cite{MR2304157}, with a constant $\delta \in (0,1)$.
\begin{assumption}[$\delta$]
                        \label{assump1}
There exists $R_0\in (0, 1/4)$ such that for any $(t_0,x_0)\in \overline{Q_2^+}$ and $r\in (0,R_0)$, there exists $\hat{a}_{ij}(t)$ satisfying \eqref{ellipticity} and
$$
\fint_{Q_r^+(t_0,x_0)} |a_{ij}(t,x)-\hat{a}_{ij}(t)|\,dx\,dt\le \delta \quad \text{for} \  i, j = 1,2,\ldots, d.
$$
\end{assumption}

For the definitions of $Q_r^+(t_0,x_0)$ and various function spaces, we refer the reader to Section \ref{sec2.1}. We say that $(u,p) \in  W_1^{1,2}(Q_1^+)^d \times W_1^{0,1}(Q_1^+)$ is a strong solution of \eqref{eq7.41c} on $Q_1^+$ if \eqref{eq7.41c} holds for a.e. $(t,x) \in Q_1^+$ and \eqref{bdr-cond} holds in the sense of trace.
The main result of the paper on the local $L_{s,q}$-estimate for solutions to \eqref{eq7.41c} is now stated as the following theorem.

\begin{theorem}
                                \label{thm2.3b}
Let $s, q\in (1,\infty)$.
There exists $\delta=\delta(d,\nu,s,q) \in (0,1)$ such that the following statement holds. Suppose that Assumption \ref{assump1} $(\delta)$ holds.
Then, if $(u,p) \in W^{1,2}_{s,q}(Q_1^+)^d \times W_1^{0,1}(Q_1^+)$ is a strong solution to \eqref{eq7.41c} in $Q_1^+$ with the boundary conditions \eqref{bdr-cond}, $f \in   L_{s,q}(Q_1^+)^d$, and $D g \in L_{s,q}(Q_1^+)^d$, it follows that
\begin{equation} \label{main-est-2}
\begin{split}
\|D^2u\|_{L_{s, q}(Q_{1/2}^+)} & \le N(d,\nu, s, q)\Big[ \|f\|_{L_{s,q}(Q_{1}^+)} +  \|Dg\|_{L_{s,q}(Q_{1}^+)}\Big]\\
& \quad +N(d,\nu,s, q) R_0^{-2}\|u\|_{L_{s,q}(Q_{1}^+)}.
\end{split}
\end{equation}
\end{theorem}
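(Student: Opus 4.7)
The plan is to implement the freezing-of-coefficients/mean-oscillation program adapted to the Stokes system. First I would reduce to the divergence-free case: construct a corrector $v$ satisfying $\Div v=g$ together with the Lions boundary conditions, with $\|D^{2}v\|_{L_{s,q}(Q_1^+)}\lesssim \|Dg\|_{L_{s,q}(Q_1^+)}$; such $v$ is obtained by solving a standard $\Div$-problem on the half-cylinder. Replacing $u$ by $u-v$, the pair $(u-v,p)$ solves a Stokes system of the same form with $g\equiv 0$ and a new forcing that still has the desired $L_{s,q}$ bound. The heart of the argument is then to establish a pointwise control of the sharp maximal function of $D^{2}u$ on small half-cylinders by $L_{q_0}$-averages of $f$, $Dg$, and a small multiple of $D^{2}u$ itself (the smallness being the $\delta$ of Assumption~\ref{assump1}). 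The mixed-norm $L_{s,q}$ bound \eqref{main-est-2} then follows from a Fefferman--Stein type sharp-function theorem, the Hardy--Littlewood maximal inequality, and absorption of the small $D^2 u$ term into the left-hand side after choosing $\delta$ sufficiently small.

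\textbf{Frozen-coefficient model problem.} Fix a base point $(t_0,x_0)\in\overline{Q_{1/2}^{+}}$, freeze $a_{ij}(t,x)\leadsto\hat{a}_{ij}(t)$ on $Q_r^{+}(t_0,x_0)$, and consider the corresponding solution $w$ with the same boundary and initial data as $u$. Here the Lions boundary conditions are decisive: the tangential components $u_k$ ($k<d$) and the pressure $p$ admit even reflection across $\{x_d=0\}$, while $u_d$ admits odd reflection, so that after a preliminary shear in $x_d$ (eliminating the cross entries $\hat{a}_{kd}$) the reflected equation is a whole-space Stokes system with coefficients depending only on $t$. For such systems one has $L_2$-Caccioppoli estimates and $C^{\alpha}$ regularity of $D^{2}u$ on interior subcylinders; these are precisely the "newly developed regularity estimates for parabolic equations with coefficients measurable in the time variable and in one spatial variable" referred to in the abstract. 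I would invoke those as a black box to obtain Campanato-type decay
\[
\fint_{Q_\rho^{+}(t_0,x_0)}|D^{2}w-(D^{2}w)_{Q_\rho^+(t_0,x_0)}|^{q_0}\le C\bigl(\rho/r\bigr)^{q_0\alpha}\fint_{Q_r^{+}(t_0,x_0)}|D^{2}w|^{q_0}
\]
for $\rho\le r$ and some $q_0\in(1,\min\{s,q\})$.

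\textbf{Mean oscillation bootstrap and main obstacle.} With the frozen estimate in hand, decomposing $u=w+(u-w)$ and performing a standard energy estimate on $u-w$ (whose equation has forcing $(a_{ij}-\hat{a}_{ij})D_{ij}u+f$), one controls the oscillation of $D^{2}u$ over $Q_\rho^+(t_0,x_0)$ by $(\rho/r)^{\alpha}\|D^{2}u\|+\delta^{\theta}\mathcal{M}(|D^{2}u|^{q_0})^{1/q_0}+\mathcal{M}(|f|^{q_0}+|Dg|^{q_0})^{1/q_0}$, with $\theta>0$ coming from H\"older's inequality. Taking first $\rho/r$ small and then $\delta$ small yields the desired sharp-function inequality, completing the argument via the mixed-norm Fefferman--Stein theorem. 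The main obstacle, already flagged by the authors, is the frozen-coefficient step: the Stokes pressure $p$ is non-locally coupled to $u$ and is not expected to possess independent regularity in a useful Sobolev class, so reflecting across $\{x_d=0\}$ and extracting $C^\alpha$-regularity of $D^{2}u$ must be done by working with the combined quantity $u_t+\nabla p$ rather than $u_t$ or $\nabla p$ separately, and by exploiting the divergence-free constraint to recover the normal-normal component $D_{dd}^2 u$ from tangential derivatives and the equation.
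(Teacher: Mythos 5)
Your high-level framework (freeze coefficients, reflect using the Lions boundary conditions, get a decay estimate for the frozen model, control mean oscillations, absorb via Fefferman--Stein) is the right one, and you correctly identify that the pressure is the obstruction. However, there is a genuine gap in how you propose to handle it, and a step that would actually fail. The paper's resolution of the pressure problem is to pass to the vorticity $\omega_{kl}=\partial_k u_l-\partial_l u_k$: taking the curl of the momentum equation eliminates $\nabla p$ entirely, and $\omega_{kl}$ satisfies a scalar parabolic equation \eqref{eq8.05} (in both divergence and non-divergence form after reflection) with coefficients measurable only in $(t,x_d)$, with either Dirichlet or conormal boundary data depending on whether one of $k,l$ equals $d$. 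One then derives the $C^{1/2,1}$ decay for $D\omega$ (Lemma \ref{lem1.3}), proves the sharp-function estimate for $D\omega$ rather than for $D^2 u$ (Lemmas \ref{interior-lemma}, \ref{non-vorticity-oss.est}), and finally recovers $D^2 u$ from $D\omega$ and $Dg$ via the elliptic Poisson-type identity $\Delta u_i = D_i g + \sum_k D_k\omega_{ki}$ together with the boundary conditions inherited from \eqref{bdr-cond}. Your proposal instead aims at a Campanato decay for $D^2 w$ directly, with a vague suggestion to "work with $u_t+\nabla p$"; without the vorticity reduction you would still need independent regularity of $\nabla p$ to close this, which is exactly what is not available.

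A second, concrete error: your "preliminary shear in $x_d$ to eliminate the cross entries $\hat a_{kd}$" cannot work. After the even/odd reflection across $\{x_d=0\}$, the coefficients become $\bar a_{jd}(t,x_d)=\pm a_{jd}(t)$ with a sign flip at $x_d=0$; they are genuinely discontinuous in $x_d$, so a change of variables linear in $x_d$ cannot remove them. The paper instead invokes the solvability and regularity theory for parabolic equations with coefficients merely measurable in $(t,x_d)$ (cf.\ \cite{MR2550072,MR2833589}) as a black box — this is the technical backbone, and you cannot dispense with it by a coordinate change. Relatedly, the construction of the approximating solution $w$ (your "frozen-coefficient model problem") rests on the non-trivial solvability result, Theorem \ref{existence-lemma}, for the half-space Stokes system with Lions boundary conditions and only $t$-dependent coefficients; the paper devotes Section~\ref{sec4} to building this via the vorticity and the fundamental solution of the Laplacian. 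Your proposal treats the existence of $w$ as immediate, and also proposes a reduction to $g\equiv 0$ by a divergence corrector, which is plausible but is not how the paper proceeds (it retains $g$ and the constant-in-$x$ term $[g(t,\cdot)]_{\zeta_r,B_r^+}$ in the perturbation step, which is needed to match the half-space solvability hypotheses).
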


\begin{remark}

(i) By using interpolation and a standard iteration argument, it is easily shown that \eqref{main-est-2} still holds if we replace the term $R_0^{-2}\norm{u}_{L_{s,q}(Q_1^+)}$ on the right-hand side with $R_0^{-2-d+d/q}\norm{u}_{L_{s,1}(Q_1^+)}$.

(ii) The estimate \eqref{main-est-2} holds trivially for $d=1$. Therefore, throughout the paper, we set $d \geq 2$.
\end{remark}

Even in the un-mixed norm case with $s=q=2$, the estimate \eqref{main-est-2} is new.  In this case, local boundary estimates as in \eqref{main-est-2} are known as Caccioppoli-type estimates. See \cite{MR2429247, arXiv:1805.04143, MR3116962, MR2646528}, for instance. However, in contrast to the case we consider, the local boundary Caccioppoli-type estimates for non-stationary Stokes systems do not hold under the homogeneous Dirichlet boundary conditions, as demonstrated in a recent work \cite{arXiv:1806.02516}.
Therefore,  besides other interests, finding a right class of boundary conditions so that \eqref{main-est-2} holds is an interesting question, which this paper answers.

We emphasize that the boundary conditions \eqref{bdr-cond} are essential to the validity of \eqref{main-est-2}.
Observe that unlike some known local regularity estimates (see \cite{MR3289443}, for instance), \eqref{main-est-2} does not contain the pressure on the right-hand side, and thus it requires only very mild regularity of the pressure. To the best of our knowledge, it is new even for the classical Stokes system, i.e., when $a_{ij}=\delta_{ij}$. As such, \eqref{main-est-2} might be useful in applications.
For more information regarding estimates without the pressure, see \cite[Remark IV.4.2]{MR2808162} and \cite{MR2027755, MR3822765, MR1789922} for stationary equations with constant coefficients, \cite{MR2429247} for time-dependent equations with constant coefficients, and \cite{arXiv:1805.04143} for time-dependent equations with measurable coefficients.

The $L_q$-estimates for solutions of Stokes systems are a research topic of great mathematical interest. See the monographs \cite{MR2808162, MR3289443, MR3822765}, as well as a survey paper \cite{HS18} and the references therein. The earliest work on equations with constant coefficients can be found in \cite{MR0171094}. See also \cite{GS91, MR1359996, MR1923551}. In these works, global estimates are proved either using fundamental solutions and potential analysis techniques, or using a functional analytic approach. Local estimates are more delicate and cannot be derived from these methods.
In recent work \cite{MR3758532, MR3949432}, the local and global $L_q$ and weighted $L_q$ theory are established for divergence form stationary Stokes systems with measurable coefficients using a perturbation method and localization technique.
However, this approach does not work for non-stationary Stokes systems owing to the lack of local regularity in the time variable of solutions and the pressure.
This problem is considered in a recent work \cite{arXiv:1805.04143}, in which local interior estimates in mixed-norm Lebesgue spaces are established by combining the perturbation argument with several regularity estimates for equations in divergence and non-divergence form applied to the vorticity equations. In this paper, we study the corresponding local boundary estimates.

The proof of Theorem \ref{thm2.3b} is based on the perturbation technique using the Fefferman--Stein sharp functions developed in \cite{MR2550072, MR2304157, MR2435520} and in \cite{MR3758532, MR3949432, arXiv:1805.04143}. There are several additional difficulties.
First, as we already mentioned, the localization technique typically used in the study of stationary Stokes systems \cite{MR3758532, MR3949432} is not applicable owing to the lack of regularity in the time variable for the Stokes system.
Second, the structure of the system is nonlocal in view of the pressure term, and its complicated interaction with the boundary is not very well understood.
Finally, the usual local energy estimates that are essential in perturbation methods are not known in the literature for the time-dependent Stokes system \eqref{eq7.41c}.
To overcome these difficulties, we modify the ideas used in \cite{arXiv:1805.04143} and take the boundary conditions \eqref{bdr-cond} into account to derive boundary estimates for the solutions of the vorticity equations and divergence equations. Several new intermediate results on the solvability and regularity estimates for the Stokes system and the vorticity equations near the boundary are  developed.

In the rest of this section, we briefly discuss a result on the solvability of the Stokes system with the Lions boundary conditions. The result is not only intrinsically interesting, but is also an essential ingredient that we develop to prove Theorem \ref{thm2.3b}.
Consider the following Stokes system in the upper half-space:
\begin{equation} \label{u-sol.eqn-whole}
\left\{
\begin{array}{cccl}
u_t -a_{ij}(t) D_{ij} u + \nabla p & = & f  & \quad \text{in} \   (0, T] \times \bR_{+}^d, \\
\Div u & = & g & \quad \text{in} \  (0, T] \times  \bR_{+}^d,\\
u(0,x) & =& 0& \quad \text{for} \ x \in \bR^d_+,
\end{array} \right.
\end{equation}
with the Lions boundary conditions
\begin{equation} \label{Lions-bdr-c}
D_du_k = u_d  =  0  \quad \text{on} \  (0, T] \times  \bR^{d-1} \times \{0\} \quad \text{for} \ k = 1,2,\ldots, d-1,
\end{equation}
where $T>0$ is some given number and $\mathbb{R}^d_+ = \mathbb{R}^{d-1} \times (0, \infty)$. In \eqref{u-sol.eqn-whole}, we assume that $a_{ij}$ is a measurable function depending only on the time variable, i.e., $a_{ij}: (0, T) \rightarrow \bR$, and that \eqref{ellipticity} holds.

\begin{theorem} \label{existence-lemma}
Let $T >0$ and $q_0 \in (1, \infty)$. Let $f \in L_{q_0}((0, T) \times \bR_{+}^d)^d$ and $g: (0, T) \times \bR_{+}^d \rightarrow \bR$ such that $g\in L_{q_0}((0, T) \times \bR_{+}^d)$, $Dg \in L_{q_0}((0, T) \times \bR_{+}^d)^d$, $g(0,\cdot) =0$, and $g_t = \Div G$ for some vector field
$$
G = (G_1, G_2,\ldots, G_d) \in L_{q_0}((0, T) \times \bR_{+}^d)^d
$$
in the sense that
\begin{equation} \label{eq0702_01}
\int_{(0, T)\times \bR^d_{+}} g \varphi_t \, dx \, dt = \int_{(0, T) \times \bR^d_{+}} G_i D_i \varphi \, dx \, dt
\end{equation}
for any $\varphi \in C_0^\infty((0, T) \times \bR^d)$.
Then there exists a unique strong solution $(u, p)$ of \eqref{u-sol.eqn-whole}--\eqref{Lions-bdr-c} such that
\[
\begin{split}
& u \in L_\infty\left((0,T), L_{q_0}(\bR^d_+)\right), \quad u_t,  Du, D^2 u \in L_{q_0}((0, T) \times \bR_{+}^d),\\
& p \in L_{q_0}((0, T), L_{q_0, \textup{loc}}(\bR^d_+)), \quad \nabla p  \in L_{q_0}((0, T) \times \bR_{+}^d ).
\end{split}
\]
Moreover,  $(u,p)$ satisfies the estimates
\begin{equation}\label{eq0715_07}
\begin{split}
& \norm{Du}_{L_{q_0}((0, T) \times \bR_{+}^d )} \leq N_1 \norm{f}_{L_{q_0}((0, T) \times \bR_{+}^d)} + N_2 \norm{g}_{L_{q_0} ((0, T) \times \bR_{+}^d)},
\\
& \norm{D^2u}_{L_{q_0}((0, T) \times \bR_{+}^d)}  \leq N_2 \left[ \norm{f}_{L_{q_0}((0, T) \times \bR_{+}^d)} + \norm{D g}_{L_{q_0} ((0, T) \times \bR_{+}^d)} \right],
\\
& \norm{\nabla p}_{L_{q_0}((0, T) \times \bR_{+}^d)} \leq N_2 \left[ \norm{f}_{L_{q_0}((0, T) \times \bR_{+}^d)} + \norm{D g}_{L_{q_0} ((0, T) \times \bR_{+}^d)} + \norm{G}_{L_{q_0}((0, T) \times \bR_{+}^d)} \right],
\end{split}
\end{equation}
and
\begin{equation}
\label{eq0715_09}
\norm{u_t}_{L_{q_0}((0, T) \times \bR_{+}^d)} \leq N_2 \left[  \norm{f}_{L_{q_0}((0, T) \times \bR_{+}^d)} + \norm{G}_{L_{q_0}((0, T) \times \bR_{+}^d)} \right],
\end{equation}
for some constants $N_1 = N_1(\nu, d, q_0, T) >0$ and $N_2 = N_2(\nu, d, q_0) >0$.
\end{theorem}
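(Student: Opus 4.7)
The plan is to reduce the half-space problem to a whole-space Stokes system by an odd/even reflection across $\{x_d=0\}$, and then to invoke the $L_{q_0}$-solvability for whole-space Stokes systems with coefficients measurable in $(t,x_d)$ and independent of $x'$. Extend $u_k$, $f_k$, $G_k$ for $k<d$, together with $p$ and $g$, evenly across $\{x_d=0\}$, and extend $u_d$, $f_d$, $G_d$ oddly; these parities are chosen precisely so that the Lions boundary conditions $D_d u_k = u_d = 0$ are exactly the compatibility needed for the reflected velocity to lie in $W^{1,2}_{q_0}(\bR^d)^d$ and the reflected $\tilde g$ in $W^{0,1}_{q_0}(\bR^d)$. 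To keep the Stokes equation invariant under the reflection, define a reflected symmetric matrix $\hat a_{ij}(t,x)$ equal to $a_{ij}(t)$ for $x_d>0$ and to $\epsilon_{ij}\,a_{ij}(t)$ for $x_d<0$, where $\epsilon_{ij}=-1$ if exactly one of $i,j$ equals $d$ and $\epsilon_{ij}=1$ otherwise. A direct component-by-component computation then verifies that $(\tilde u,\tilde p)$ solves the Stokes system on $(0,T)\times\bR^d$ with coefficient $\hat a$, data $(\tilde f,\tilde g)$, and $\tilde g_t=\operatorname{div}\tilde G$; ellipticity of $\hat a$ with the same constant $\nu$ follows from the substitution $\eta=(\xi_1,\ldots,\xi_{d-1},-\xi_d)$.

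After this reduction $\hat a$ is measurable in $(t,x_d)$ and independent of $x'$, so it falls within the class of Stokes systems covered by the whole-space $L_{q_0}$-solvability developed in the body of the paper via vorticity-based perturbation arguments combined with the recent parabolic $L_q$-theory for coefficients measurable in $t$ and one spatial variable. That result provides a unique strong solution $(\tilde u,\tilde p)$ on $\bR^d$ together with the $L_{q_0}$-bounds on $D^2\tilde u$, $\nabla \tilde p$, and $\tilde u_t$ in terms of $\tilde f$, $D\tilde g$, and $\tilde G$. Restriction to $\{x_d>0\}$ produces the required $(u,p)$; the Lions boundary conditions are recovered from the built-in symmetry, and the half-space $L_{q_0}$-estimates transfer from the whole-space ones. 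Uniqueness on the half-space reduces to whole-space uniqueness after re-extending any candidate difference by the same symmetry.

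The delicate estimate is \eqref{eq0715_09}, which bounds $\|u_t\|_{L_{q_0}}$ without any $\|Dg\|_{L_{q_0}}$ term. For this I would use the Helmholtz decomposition combined with the identity $\operatorname{div} u_t=g_t=\operatorname{div} G$: the curl-free part $P_\parallel u_t$ coincides with $P_\parallel G$ and is therefore controlled by $\|G\|_{L_{q_0}}$, while the divergence-free part $P_\perp u_t$ is orthogonal to $\nabla p$, and applying $P_\perp$ to the Stokes equation yields the parabolic system $(P_\perp u)_t-a_{ij}(t)D_{ij}(P_\perp u)=P_\perp f$, to which the classical parabolic $L_{q_0}$-theory for coefficients measurable only in $t$ applies and gives $\|(P_\perp u)_t\|_{L_{q_0}}\lesssim \|f\|_{L_{q_0}}$. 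The first-order bound on $\|Du\|_{L_{q_0}}$ with $T$-dependent constant $N_1$ then follows from the $\|D^2u\|_{L_{q_0}}$ estimate by Gagliardo-Nirenberg interpolation together with a Gronwall control on $\|u\|_{L_{q_0}}$ using the zero initial data. The main obstacle in this plan is verifying that the whole-space $L_{q_0}$-theory does accommodate the step-function coefficient $\hat a$ produced by the reflection, which reduces to confirming that the perturbation/Fefferman--Stein arguments in the paper treat $\hat a(t,x_d)$ as an admissible measurable coefficient, which is their intended framework.
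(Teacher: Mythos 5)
Your reflection step matches the paper's: tangential components extended evenly in $x_d$, normal components oddly, with a reflected coefficient matrix whose entries $\hat a_{dj}(t,x_d)$, $j<d$, change sign across $x_d=0$. But there is no whole-space $L_{q_0}$-solvability theorem for Stokes systems with coefficients merely measurable in $(t,x_d)$ to cite as a black box; constructing it is the heart of the proof. The paper does so by solving the non-divergence parabolic system for the vorticity $\omega_{kl}=D_k\tilde u_l-D_l\tilde u_k$ in the whole space (for which $(t,x_d)$-measurable parabolic $L_q$-theory is available), recovering $\tilde u$ from $\omega$ and $\tilde g$ via the Newtonian potential formula $\tilde u_l=\int D_l\Phi\,\tilde g+\sum_k\int D_k\Phi\,\omega_{kl}$, verifying the compatibility identities $\Div\tilde u=\tilde g$ and $D_k\tilde u_l-D_l\tilde u_k=\omega_{kl}$ (the latter hinges on the cyclic identity $D_k\omega_{jl}+D_j\omega_{lk}+D_l\omega_{kj}=0$, itself a small uniqueness argument), and building the pressure by line integration of the residual. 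This is a constructive potential-theoretic argument; the Fefferman--Stein perturbation machinery you gesture at is used only in Section 5, not here.

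More critically, your Helmholtz argument for \eqref{eq0715_09} has a gap at the step $(P_\perp u)_t-a_{ij}(t)D_{ij}(P_\perp u)=P_\perp f$, which needs $P_\perp$ to commute with $a_{ij}D_{ij}$. On $\bR^d$ after reflection this fails because $\hat a_{dj}(t,x_d)$ genuinely depends on $x_d$ while $P_\perp$ is a Fourier multiplier; on $\bR^d_+$ with $a_{ij}(t)$ it fails because the half-space Leray projector does not commute with $D_{ij}$ near $\{x_d=0\}$ --- indeed the normal component of $a_{ij}D_{ij}u$ does not vanish on $\{x_d=0\}$ even when $u_d$ does (it contains $a_{dd}D_{dd}u_d$ and $a_{dj}D_{dj}u_d$), so $a_{ij}D_{ij}u$ leaves the solenoidal subspace and $P_\perp$ acts nontrivially on it. Without the commutation, the identity $(P_\perp u)_t=P_\perp(a_{ij}D_{ij}u)+P_\perp f$ only yields $\|u_t\|_{L_{q_0}}\lesssim\|D^2u\|_{L_{q_0}}+\|f\|_{L_{q_0}}\lesssim\|f\|_{L_{q_0}}+\|Dg\|_{L_{q_0}}$, reintroducing exactly the $\|Dg\|$ term that \eqref{eq0715_09} is designed to avoid. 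The paper obtains \eqref{eq0715_09} instead by differentiating the Newtonian potential representation of $\tilde u$ in time, applying $\partial_t\tilde g=\Div\tilde G$ to the $\tilde g$-term and the weak form of the divergence-form vorticity equation to the $\omega$-term, so that each contribution is bounded by $\|G\|_{L_{q_0}}$ and $\|f\|_{L_{q_0}}$ alone.
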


Although the Stokes system with the Lions boundary conditions appeared some time ago \cite{Li69, MR1422251}, Theorem \ref{existence-lemma} seems new.
To carry out the proof, we use the boundary conditions and carefully use odd/even extensions to look for a solution in the whole space.
To avoid the complication due to the pressure, we first solve for the vorticity, from which we recover the solution using the divergence equation and the fundamental solution of the Laplace equation. Because of the odd and even extensions, the new coefficients of the Stokes system in the whole space are merely measurable with possibly very large oscillation in the $x_d$ direction.
Therefore, solving and estimating the solutions in Sobolev spaces are quite involved. Several recent results developed in \cite{MR2833589, MR2550072} on the existence, uniqueness, and regularity results for equations with coefficients only measurable in $t$ and one of the spatial directions are carefully applied to obtain the desired results.

The rest of the paper is organized as follows. In Section \ref{Preli}, we introduce the notation and recall several known inequalities and estimates that are needed in the paper. In Section \ref{constant-sec}, we study the Stokes system with coefficients depending only on the time variable.
Several regularity estimates of solutions near the boundary are proved using the divergence and vorticity equations.
In Section \ref{sec4}, we prove Theorem \ref{existence-lemma} on the existence and uniqueness of strong solutions to the Stokes system in the upper half-space with the Lions-type boundary conditions.
In the last section, Section \ref{non-div-se}, Theorem \ref{thm2.3b} is proved.

\section{Notation and preliminary estimates} \label{Preli}

\subsection{Notation}           \label{sec2.1}
We denote the upper half-ball in $\mathbb{R}^d$ of radius $\rho$ centered at $x_0 = (x'_0, x_{d0}) \in \bR^{d-1} \times \mathbb{R}$ as
\[
B_\rho^+(x_0) =\{x = (x', x_d) \in \bR^{d-1} \times \bR: |x-x_0| <\rho, \ x_d > 0 \}
\]
and the upper half-parabolic cylinder centered at $z_0 = (t_0, x_0) \in \bR^{d+1}$ with radius $\rho>0$ as
\[
Q_\rho^+(z_0) = (t_0 - \rho^2, t_0] \times B_\rho^+(x_0).
\]
For brevity, when $z_0 = (0,0)$, we write $Q_\rho^+ = Q_\rho^+(0,0)$ and $B_\rho^+ = B_\rho^+(0)$. We also denote by $B_\rho'$ the unit ball in $\bR^{d-1}$ centered at the origin with radius $\rho>0$.

For each $s, q \in [1, \infty)$ and each parabolic cylinder $Q  = \Gamma \times \Omega \subset \bR \times \bR^{d}$, the Lebesgue mixed $(s,q)$-norm of a measurable function $h$ defined in $Q$ is
\[
\norm{h}_{L_{s,q}(Q)} = \left[\int_{\Gamma} \left( \int_{\Omega} |h(t,x)|^q \,dx \right)^{s/q} dt \right]^{1/s},
\]
and we denote the mixed-norm Lebesgue spaces as
\[
L_{s,q}(Q)= \{h : Q \rightarrow \mathbb{R}: \|h\|_{L_{s,q}(Q)} <\infty\}.
\]
We also denote the parabolic Sobolev space as
\begin{align*}
W_{s,q}^{1,2}(Q)&=
\set{u:\,u, Du,D^2u\in L_{s,q}(Q), \ u_t \in L_{1}(Q)},
\end{align*}
which is slightly different from the usual parabolic Sobolev spaces as it does not require $u_t \in L_{{s,q}}(Q)$. We also set
$$
W_{s,q}^{0,1}(Q) = \set{u:\,u, Du\in L_{s,q}(Q)}.
$$
When $s= q$, we omit one of these two indices and write
\[
L_q(Q) = L_{q,q}(Q), \quad W_{q}^{1,2}(Q) = W_{q,q}^{1,2}(Q) , \quad W_{q,q}^{0,1}(Q) = W_q^{0,1}(Q).
\]

\subsection{Sharp function estimates}
The following result is a special case of \cite[Theorem 2.3 (i)]{MR3812104}.
Let $\cX\subset \bR^{d+1}$ be a space of homogeneous type, which is endowed with the parabolic distance and a doubling measure $\mu$ that is naturally inherited from the Lebesgue measure.
As in \cite{MR3812104}, we take a filtration of partitions of $\cX$ (cf. \cite{MR1096400}) and, for any $f\in L_{1,\text{loc}}$, we define its dyadic sharp function $f^{\#}_{\text{dy}}$ in $\cX$ associated with the filtration of partitions.
In addition, for each $q \in [1, \infty]$, $A_q$ denotes the Muckenhoupt class of weights.

\begin{theorem} \label{Feffer}
Let $s, q \in (1,\infty)$, $K_0 \geq 1$, and $\omega \in A_{q}$ with $[\omega]_{A_q} \leq K_0$. Suppose that $f \in L_{s}(\omega d\mu)$.
Then,
\[
\norm{f}_{L_s(\omega d\mu)} \leq  N\left[\norm{f^{\#}_{\text{dy}}}_{L_{s}(\omega d\mu)} + \mu(\mathcal{X})^{-1} \omega(\textup{supp}(f))^{\frac{1}{s}} \norm{f}_{L_1(\mu)} \right],
\]
where $N>0$ is a constant depending only on $s$, $q$, $K_0$, and the doubling constant of $\mu$, and the second term on the right-hand side is understood to be zero if $\mu(\cX) = \infty$.
\end{theorem}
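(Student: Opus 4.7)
The plan is to establish this weighted sharp function inequality via the Fefferman--Stein good-$\lambda$ machinery, worked out in the dyadic setting on a space of homogeneous type and upgraded using the $A_\infty$ self-improvement of the $A_q$ hypothesis.

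First I would introduce the dyadic maximal function
$$
f^*_{\text{dy}}(x) = \sup_{Q \ni x} \fint_Q |f|\, d\mu,
$$
the supremum being over cubes of the filtration containing $x$. A Calder\'on--Zygmund stopping-time argument at level $\lambda > 0$, using only doubling of $\mu$ and the dyadic nesting, produces the good-$\lambda$ inequality
$$
\mu\bigl(\Set{f^*_{\text{dy}} > 2\lambda,\ f^{\#}_{\text{dy}} \leq \gamma\lambda}\bigr) \leq C\gamma\, \mu\bigl(\Set{f^*_{\text{dy}} > \lambda}\bigr)
$$
valid for all $\gamma \in (0,1)$. On each maximal dyadic cube $Q$ where $f^*_{\text{dy}} > \lambda$, one compares $\fint_Q |f - f_{\widehat Q}|\, d\mu$ to $f^{\#}_{\text{dy}}$ (with $\widehat Q$ the dyadic parent) to control the relative measure of the bad set inside $Q$.

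Next, I would upgrade to the weighted version. The $A_q$ hypothesis with $[\omega]_{A_q} \leq K_0$ yields the $A_\infty$ bound
$$
\omega(E)/\omega(Q) \leq C(K_0, q)\bigl(\mu(E)/\mu(Q)\bigr)^\varepsilon
$$
for any dyadic $Q$, measurable $E \subset Q$, and some $\varepsilon = \varepsilon(K_0, q) > 0$. Applying this cube by cube in the good-$\lambda$ decomposition promotes the inequality to
$$
\omega\bigl(\Set{f^*_{\text{dy}} > 2\lambda,\ f^{\#}_{\text{dy}} \leq \gamma\lambda}\bigr) \leq C\gamma^\varepsilon\, \omega\bigl(\Set{f^*_{\text{dy}} > \lambda}\bigr),
$$
and integrating against $s\lambda^{s-1}\, d\lambda$, choosing $\gamma$ small to absorb, delivers $\|f^*_{\text{dy}}\|_{L_s(\omega d\mu)} \leq N\,\|f^{\#}_{\text{dy}}\|_{L_s(\omega d\mu)}$ for every $s > 0$. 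Combined with the pointwise bound $|f| \leq f^*_{\text{dy}}$ a.e.\ from dyadic Lebesgue differentiation, this settles the case $\mu(\mathcal{X}) = \infty$.

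The main obstacle is the finite-measure case, which is what produces the correction term $\mu(\mathcal{X})^{-1}\omega(\supp f)^{1/s}\|f\|_{L_1(\mu)}$. When $\mu(\mathcal{X}) < \infty$ the filtration terminates at $\mathcal{X}$, so the differentiation estimate only gives $|f(x)| \leq f^*_{\text{dy}}(x) + \fint_{\mathcal{X}} |f|\, d\mu$ a.e., and the good-$\lambda$ argument must be restricted to $\lambda > \fint_{\mathcal{X}} |f|\, d\mu = \mu(\mathcal{X})^{-1}\|f\|_{L_1(\mu)}$. The contribution of the constant piece in $L_s(\omega d\mu)$, supported on $\supp f$, is bounded trivially by $\mu(\mathcal{X})^{-1}\|f\|_{L_1(\mu)}\cdot\omega(\supp f)^{1/s}$, which is exactly the stated correction. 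A subtle point worth flagging is that we only assume $\omega \in A_q$, not $\omega \in A_s$, so one cannot simply invoke weighted boundedness of the maximal operator on $L_s(\omega)$ when $s < q$; the good-$\lambda$ route circumvents this, delivering the $L_s$-bound for every $s > 0$ directly from the $A_\infty$ estimate above.
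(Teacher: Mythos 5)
The paper states this result as a special case of \cite{MR3812104}, Theorem 2.3(i), rather than giving an internal proof, so there is no in-paper argument to compare against; your reconstruction via the dyadic good-$\lambda$ inequality together with the $A_\infty$ self-improvement $\omega(E)/\omega(Q) \lesssim (\mu(E)/\mu(Q))^{\varepsilon}$ is the standard route to such weighted Fefferman--Stein estimates on spaces of homogeneous type, and is the argument one would expect the cited theorem to rest on. You correctly identify that only the $A_\infty$ quantity (controlled by $q$ and $K_0$) enters, so the Lebesgue exponent $s$ and the weight-class index $q$ decouple. The one technical point your sketch passes over is the a priori finiteness of $\|f^{\#}_{\text{dy}}\|$-side: to absorb $\|f^{*}_{\text{dy}}\|_{L_s(\omega\,d\mu)}$ after integrating the good-$\lambda$ inequality one needs it finite to begin with, and since $\omega$ is only in $A_q$ this does not follow from $f \in L_s(\omega\,d\mu)$ when $s < q$. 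The standard fix is to run the argument for the truncation $\min(f^{*}_{\text{dy}}, M)$ (trivially in $L_s(\omega\,d\mu)$ in the finite-measure case relevant to the paper, since $\omega(\mathcal{X}) < \infty$ there), obtain bounds uniform in $M$, and let $M \to \infty$; the infinite-measure case requires an additional spatial truncation. Your treatment of the finite-measure correction term --- peeling off the top-level average $\lambda_0 = \mu(\mathcal{X})^{-1}\|f\|_{L_1(\mu)}$ and estimating its $L_s(\omega)$-contribution over $\operatorname{supp} f$ --- has the right structure, though the phrasing is slightly off: dyadic differentiation gives $|f| \leq f^{*}_{\text{dy}}$ directly (not $|f| \leq f^{*}_{\text{dy}} + \lambda_0$); the constant $\lambda_0$ enters because the good-$\lambda$ inequality is only available for $\lambda$ above the top-level average, so the low-$\lambda$ part of the layer-cake integral must be bounded crudely by $\lambda_0^s \omega(\operatorname{supp} f)$.
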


As a direct consequence of Theorem \ref{Feffer}, we have the following lemma, where $f^{\#}_{\textup{dy}}$ is the dyadic sharp function of $f$ on $Q_R^+$ associated with a filtration of partitions of $Q_R^+$ satisfying the properties in, for instance, \cite[Theorem 2.1]{MR3812104}.
Note that, as for $Q_R^+$, the constants in \cite[Theorem 2.1]{MR3812104} depend only on the dimension $d$.

\begin{lemma}
    \label{mixed-norm-lemma}
For any $s,q \in (1, \infty)$, there exists a constant $N = N(d, s, q) >0$ such that
\[
\norm{f}_{L_{s,q}(Q^+_R)} \leq N\Big[ \norm{f^{\#}_{\textup{dy}}}_{L_{s,q}(Q_R^+)} +   R^{\frac{2}{s}+\frac {d}{q} - d-2} \norm{f}_{L_1(Q_R^+)}\Big]
\]
for any $R>0$ and $f \in L_{s,q}(Q_R^+)$.
\end{lemma}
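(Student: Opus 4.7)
Since the lemma is flagged as a direct consequence of Theorem \ref{Feffer}, my plan is to transfer the weighted $L_p$-inequality in Theorem \ref{Feffer} to the mixed-norm $L_{s,q}$-setting via extrapolation. First, I would apply Theorem \ref{Feffer} to the space of homogeneous type $\cX = Q_R^+$ equipped with Lebesgue measure $\mu$ (for which $\mu(Q_R^+)$ is comparable to $R^{d+2}$) and the filtration of dyadic parabolic cubes furnished by \cite[Theorem 2.1]{MR3812104}. For every $p \in (1,\infty)$, $K_0\ge 1$, and every $\omega \in A_p$ with $[\omega]_{A_p}\le K_0$, this yields
\[
\norm{f}_{L_p(\omega\,d\mu)} \le N\bigl[\norm{f^{\#}_{\textup{dy}}}_{L_p(\omega\,d\mu)} + R^{-(d+2)}\omega(\supp f)^{1/p}\norm{f}_{L_1(\mu)}\bigr],
\]
with $N$ depending only on $d,p,K_0$.

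Next I would invoke a mixed-norm Rubio de Francia extrapolation theorem, which is the standard device for upgrading a one-parameter weighted $L_p$-inequality (valid for all $A_p$ weights with a fixed bound on the $A_p$-characteristic) into an $L_{s,q}$-inequality for every $s,q \in (1,\infty)$. Applied to the family of pairs $(f,\,f^{\#}_{\textup{dy}})$ with the $L_1$-piece carried along as a fixed error contribution, extrapolation delivers
\[
\norm{f}_{L_{s,q}(Q_R^+)} \le N\bigl[\norm{f^{\#}_{\textup{dy}}}_{L_{s,q}(Q_R^+)} + R^{-(d+2)}\norm{\chi_{Q_R^+}}_{L_{s,q}(Q_R^+)}\norm{f}_{L_1(Q_R^+)}\bigr].
\]
A direct computation from the definition of the mixed-norm gives $\norm{\chi_{Q_R^+}}_{L_{s,q}(Q_R^+)} \simeq R^{2/s + d/q}$, and hence the prefactor in the lower-order term collapses to $R^{2/s + d/q - d - 2}$, exactly the exponent appearing in the statement.

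The only real subtlety I foresee is ensuring the extrapolation is applied correctly in the presence of the weight-dependent factor $\omega(\supp f)^{1/p}$: since that factor is built from the underlying measure $\mu$ rather than from the $A_p$-characteristic of $\omega$, it behaves uniformly under the change of weights required by extrapolation, and so does not obstruct the argument. Everything else is exponent bookkeeping: matching the scaling $-(d+2) + 2/s + d/q$ against the claimed exponent and absorbing constants depending only on $d,s,q$. If a cleaner self-contained route is preferred, one can bypass extrapolation by applying Theorem \ref{Feffer} with power weights $\omega$ in the $x_d$-direction that lie in $A_p$ and then integrating in the remaining variables, but the extrapolation approach is the shortest and is the one consistent with \cite{MR3812104}.
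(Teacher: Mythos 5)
Your overall strategy (transfer Theorem \ref{Feffer} to mixed norms via extrapolation) is the same as the paper's, and the final exponent bookkeeping is right, but the argument as you wrote it has two real problems.

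First, your treatment of the term $\mu(\cX)^{-1}\omega(\supp f)^{1/p}\norm{f}_{L_1(\mu)}$ is incorrect as stated. You claim that this factor ``is built from the underlying measure $\mu$ rather than from the $A_p$-characteristic of $\omega$'' and therefore ``behaves uniformly under the change of weights.'' That is false: $\omega(\supp f)$ depends explicitly on the weight $\omega$ and scales with it, so it is not a fixed error term that passes through extrapolation unchanged. The correct observation (and this is exactly what the paper's proof does) is that
$$
\mu(\cX)^{-1}\,\omega(\supp f)^{1/p}\,\norm{f}_{L_1(\mu)} = \bignorm{(|f|)_{Q_R^+}\, I_{\supp f}}_{L_p(\omega\, d\mu)},
$$
so one applies extrapolation to the single pair $\bigl(f,\; f^{\#}_{\textup{dy}} + (|f|)_{Q_R^+}\bigr)$ of fixed functions, not to $\bigl(f, f^{\#}_{\textup{dy}}\bigr)$ with a ``carried-along'' scalar error. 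The paper implements this by defining $\phi(t)=\norm{f^{\#}_{\textup{dy}}(t,\cdot)+(|f|)_{Q_R^+}}_{L_q(B_R^+)}$, which bakes the average into the comparison function from the start.

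Second, your invocation of a ``mixed-norm Rubio de Francia extrapolation theorem'' as a black box glosses over the step that actually makes the argument work. The paper does \emph{not} cite a single extrapolation theorem that takes a weighted $L_p$-inequality to an $L_{s,q}$-inequality; rather, it first applies Theorem \ref{Feffer} with weights of the special form $\tilde\omega(t,x)=\omega(t)$, $\omega\in A_q((-R^2,0))$ (these are automatically in the parabolic $A_q$ class on $Q_R^+$). For such weights the two sides of Theorem \ref{Feffer} factor as one-dimensional weighted $L_q$ norms of $\psi(t)=\norm{f(t,\cdot)}_{L_q(B_R^+)}$ and $\phi(t)$, giving a weighted inequality in the time variable only. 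One then applies ordinary one-dimensional Rubio de Francia extrapolation (this is \cite[Theorem 2.5]{MR3812104}) to change the outer exponent from $q$ to $s$, and finally sets $\omega\equiv 1$. If you were to prove the mixed-norm extrapolation statement you invoke, you would end up reproducing exactly this reduction, so presenting it as a known off-the-shelf tool hides the step that is the actual content of the lemma. Relatedly, your proposed ``cleaner self-contained route'' via power weights in the $x_d$-direction does not match the geometry of the problem: the mixed norm $L_{s,q}$ here is $L_s$ in $t$ and $L_q$ in all of $x$, so the relevant reduction must separate $t$ from $x$, not single out $x_d$.
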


\begin{proof}
For $t\in (-R^2,0)$, let
\[
\psi(t) = \norm{f(t, \cdot)}_{L_{q}(B_R^+)} \quad \text{and} \quad
\phi(t) = \norm{f^{\#}_{\textup{dy}}(t,\cdot)+(|f|)_{Q_R^+}}_{L_{q}(B_R^+)}.
\]
Moreover, for any $\omega \in A_{q}((-R^2,0))$ with $[\omega]_{A_{q}} \leq K_0$, we write $\tilde{\omega}(t,x) = \omega(t)$ for all $(t,x) \in Q_R^+$.
Then, by applying Theorem \ref{Feffer} with $\cX=Q_R^+$, we obtain
\[
\norm{\psi}_{L_{q}((-R^2,0), \omega)}= \norm{f}_{L_{q}(Q_R^+, \tilde{\omega})}\le N\norm{f^\#_{\text{dy}}+(|f|)_{Q_R^+}}_{L_{q}(Q_R^+, \tilde{\omega})}
=  N \norm{\phi}_{L_{q}((-R^2,0), \omega)},
\]
where $N = N(d, K_0, s)$.
Then, by the extrapolation theorem (see, for instance, \cite[Theorem 2.5]{MR3812104}), we see that
\[
\norm{\psi}_{L_{s}((-R^2,0), \omega)} \leq 4N \norm{\phi}_{L_{s}((-R^2,0), \omega)}, \quad \forall \ \omega \in A_{s}, \quad [\omega]_{A_{s}} \leq K_0.
\]
Note that in the special case of $\omega\equiv 1$, $\norm{\psi}_{L_{s}((-R^2,0), \omega)}  = \norm{f}_{L_{s,q}(Q_R^+)}$ and
\[
 \norm{\phi}_{L_{s}((-R^2,0), \omega)} \le \norm{f^{\#}_{\text{dy}}}_{L_{s,q}(Q_R^+)} + R^{\frac 2 s+\frac d q} (|f|)_{Q_R^+}.
\]
Therefore, the desired estimate follows.
\end{proof}
\section{Stokes systems with simple coefficients} \label{constant-sec}
In this section, we consider the time-dependent Stokes system with coefficients depending only on the time variable:
\begin{equation}  \label{eq7.41}
u_t- a_{ij}(t) D_{ij} u +\nabla p=0,\quad \Div u= 0 \quad \text{in} \,\, Q_1^+.
\end{equation}
The system \eqref{eq7.41} is equipped with the Lions boundary conditions on $\{x_d =0\} \cap B_1$: for $k =1,2,\ldots, d-1$,
\begin{equation} \label{simple-bd-cond}
D_d u_k (t, x', 0) = u_d (t, x', 0) =0 \quad \text{for a.e.} \,\, (t,x') \in (-1, 0] \times B_1',
\end{equation}
where $a_{ij}=  (a_{ij}(t))$ is a given symmetric matrix of coefficients depending only on the time variable $t$ and satisfying the ellipticity condition \eqref{ellipticity}.
This section provides key estimates that are needed for the proof of Theorem  \ref{thm2.3b}. We begin with the following estimates of the gradient and the second derivatives of solutions.

\begin{lemma}  \label{lem1.2}
Let $q_0\in (1,\infty)$, and $(u,p)\in W^{1,2}_{q_0}(Q_1^+)^d\times  W_1^{0,1}(Q_1^+)$ be a strong solution to \eqref{eq7.41} in $Q_1^+$ with the boundary conditions \eqref{simple-bd-cond}. Then we have
\begin{align}
                                        \label{eq7.46b}
\|D^2u\|_{L_{q_0}(Q_{1/2}^+)} & \le N(d,\nu,q_0)
\sum_{i=1}^{d-1}\Big(\|D_{x'}u_i-[D_{x'}u_i]_{B_1^+}(t)\|_{L_{q_0}(Q_{1}^+)}
+\|D_{d}u_i\|_{L_{q_0}(Q_{1}^+)}\Big)\nonumber\\
&\quad+N(d,\nu,q_0) \|D_{x'}u_d\|_{L_{q_0}(Q_{1}^+)},
\end{align}
where $[Du_i]_{B_1^+}(t)$ is the average of $Du_i(t,\cdot)$ in $B_1^+$.
Moreover, \eqref{eq7.46b} also holds if the second equation in \eqref{eq7.41} is replaced by
\[
\Div u = g(t) \quad \text{in} \  Q_1^+
\]
for some given measurable function $g: (-1,0) \rightarrow \mathbb{R}$, which is independent of the spatial variables.
\end{lemma}

\begin{proof}
We prove \eqref{eq7.46b} when the second equation in \eqref{eq7.41} is replaced by
\[
\Div u = g(t) \quad \text{in} \  Q_1^+.
\]
Let $(\omega_{kl})_{k, l =1}^d$ be a matrix-valued function defined in $Q_1^+$ as
\begin{equation} \label{omega-def}
\omega_{kl} = \partial_k u_l - \partial_l u_k \quad \text{in} \  Q_1^+, \quad \text{for} \ k, l \in \{1, 2,\ldots, d\}.
\end{equation}
Then $\omega_{kl} \in \cH_{q_0}^1(Q_1^+)$ for $k,l \in \{1,2,\ldots,d\}$. See \cite[p. 362]{MR2764911} for the definition of $\cH_{q_0}^1(Q_1^+)$.
We set
$$
\tilde{a}_{ij}(t) = a_{ij}(t), \quad \tilde{a}_{dd} = a_{dd}(t)
$$
for $i,j \in \{1,\ldots,d-1\}$ and
$$
\tilde{a}_{dj}(t) = 2a_{dj}(t), \quad
\tilde{a}_{jd} = 0
$$
for $j=1,\ldots,d-1$.
We observe that for every $k, l \in \{1, 2,\ldots, d\}$, $\omega_{kl}\in \cH^1_{q_0}{ (Q_1^+)}$ is a weak solution to the parabolic equation
\begin{equation}
                        \label{eq8.05}
\partial_t \omega_{kl}- \Div \left( \tilde{a} (t)^{\top} \nabla \omega_{kl} \right) =0  \quad \text{in} \  Q_1^+
\end{equation}
with the homogeneous Dirichlet or homogeneous conormal derivative boundary condition on $\{x_d =0\}$. Precisely,
\begin{equation} \label{bdr-vorticity}
\left\{
\begin{array}{ll}
 \displaystyle \sum_{j=1}^d \tilde{a}_{jd}(t) D_j \omega_{k l} = a_{dd}(t) D_d \omega_{kl} = 0 \quad \text{on} \ \{x_d =0\} \cap B_1 & \ \ \text{if} \, k, l \in \{1, 2,\ldots, d-1\},\\
 \omega_{kl} =0  \quad \text{on} \ \{x_d =0\} \cap B_1 & \ \ \text{if} \, k =d \ \text{or} \ l =d.
\end{array} \right.
\end{equation}
From this, we apply the local boundary $\cH^1_p$-estimate for linear parabolic equations in divergence form (cf. \cite{MR2764911}) to obtain
\begin{equation}
                            \label{eq7.57}
\|D\omega\|_{L_{q_0}(Q_{2/3}^+)}\le N(d,\nu,q_0)\|\omega\|_{L_{q_0}(Q_{3/4}^+)}.
\end{equation}
Since $\Div u = g(t)$ and $g$ is independent of the $x$-variable, we have
\begin{equation} \label{ui-eqn}
\Delta u_i=  -D_i  \sum_{k=1}^d D_k u_k + \sum_{k= 1 }^d D_{kk}u_i
=  \sum_{k=1}^d D_k \omega_{ki} \quad \text{a.e. in}  \quad Q_1^+.
\end{equation}
Then, upon using the boundary conditions \eqref{simple-bd-cond} and \eqref{bdr-vorticity}, for a.e. $t \in (-1,0)$, one can view \eqref{ui-eqn} as the following Poisson equations in non-divergence form with the Neumann and Dirichlet boundary condition, respectively. Precisely, for a.e. $t \in (-1,0)$, the function $u_i = u_i(t,\cdot)$ satisfies
\[
\left\{
\begin{aligned}
\Delta \left(u_i - [u_i]_{B_1^+}(t)\right)=\sum_{k=1}^d D_k \omega_{ki} & \quad \text{in} \  B_1^+, \\
 D_d \left(u_i - [u_i]_{B_1^+}(t)\right) =  0 & \quad \text{on} \ \{x_d = 0\} \cap B_1
\end{aligned}
\right.
 \]
for $i=1,\ldots,d-1$, where $[u]_{B_1^+}(t)$ is the average of $u(t,\cdot)$ in $B_1^+$, and
\[
\left\{
\begin{aligned}
\Delta u_d =  \sum_{k=1}^d D_k \omega_{kd} & \quad \text{in} \  B_1^+, \\
  u_d = 0 & \quad \text{on} \ \{x_d = 0 \} \cap B_1.
\end{aligned} \right.
\]
We apply the local boundary $W_p^2$-estimate for the Laplace operator and then integrate it over the time variable to obtain
$$
\|D^2 u\|_{L_{q_0}(Q_{1/2}^+)} \le N \|D\omega\|_{L_{q_0}(Q_{2/3}^+)}+ N\sum_{i=1}^{d-1}\|u_i - [u_i]_{B_1^+} \|_{L_{q_0}(Q_{2/3}^+)} + N\|u_d\|_{L_{q_0}(Q_{2/3}^+)}.
$$
From this inequality and \eqref{eq7.57} we obtain that
\begin{align*}
&\|D^2 u\|_{L_{q_0}(Q_{1/2}^+)}
\le N \|\omega\|_{L_{q_0}(Q_{3/4}^+)}+ N\sum_{i=1}^{d-1}\|u_i - [u_i]_{B_1^+} \|_{L_{q_0}(Q_{2/3}^+)} + N\|u_d\|_{L_{q_0}(Q_{2/3}^+)}\\
&\le \varepsilon\|D^2u\|_{L_{q_0}(Q_{3/4}^+)}+N\varepsilon^{-1}\sum_{i=1}^{d-1}\|u_i-[u_i]_{B_1^+}(t)\|_{L_{q_0}(Q_{3/4}^+)} + N\varepsilon^{-1} \|u_d\|_{L_{q_0}(Q_{3/4}^+)},
\end{align*}
where in the second inequality we used multiplicative inequalities.
It then follows from a standard iteration argument that
$$
\|D^2u\|_{L_{q_0}(Q_{1/2}^+)}
\le N\sum_{i=1}^{d-1}\|u_i-[u_i]_{ B_1^+}(t)\|_{L_{q_0}(Q_{1}^+)} + N\|u_d\|_{L_{q_0}(Q_{1}^+)}.
$$
By the multiplicative inequalities again, we arrive at
\begin{equation}
							\label{eq1230_01}
\|Du\|_{L_{q_0}(Q_{1/2}^+)}
\le N\sum_{i=1}^{d-1}\|u_i-[u_i]_{ B_1^+}(t)\|_{L_{q_0}(Q_{1}^+)} + N\|u_d\|_{L_{q_0}(Q_{1}^+)}.
\end{equation}

Now by using the method of finite-difference quotient in the $x'$ direction and taking the limit, from \eqref{eq1230_01}, we get
\begin{equation}
                    \label{eq10.03}
\|DD_{x'}u\|_{L_{q_0}(Q_{1/2}^+)}\le
N \sum_{i=1}^{d-1}\|D_{x'}u_i-[D_{x'}u_i]_{ B_1^+}(t)\|_{L_{q_0}(Q_{1}^+)} + N\|D_{x'} u_d\|_{L_{q_0}(Q_{1}^+)},
\end{equation}
where $N = N(d, \nu,q_0)$.
Using the condition that $\Div u$ is independent of $x$, we also have
\begin{equation}
                    \label{eq10.05}
\|D_d^2 u_d\|_{L_{q_0}(Q_{1/2}^+)}\le N\sum_{i=1}^{d-1}\|D_{x'}u_i-[D_{x'}u_i]_{ B_1^+}(t)\|_{L_{q_0}(Q_{1}^+)} + N\|D_{x'}u_d\|_{L_{q_0}(Q_{1}^+)}.
\end{equation}
It remains to estimate $D_d^2 u_i$ for $i=1,\ldots,d-1$. Since
$$
D_d^2 u_i=D_d\omega_{di}+D_dD_i u_d,
$$
it follows from \eqref{eq7.57} and \eqref{eq10.03} that

\begin{equation}
                    \label{eq10.20}
\|D_d^2 u_i\|_{L_{q_0}(Q_{1/2}^+)}\le N \sum_{i=1}^{d-1}\left(\|D_{x'}u_i-[D_{x'}u_i]_{ B_1^+}(t)\|_{L_{q_0}(Q_{1}^+)} + \|\omega_{di}\|_{L_{q_0}(Q_{1}^+)}\right) + N\|D_{x'} u_d\|_{L_{q_0}(Q_{1}^+)}.
\end{equation}
Combining \eqref{eq10.03}, \eqref{eq10.05}, \eqref{eq10.20}, and the triangle inequality, we obtain \eqref{eq7.46b}.  The lemma is proved.
\end{proof}

Now, recall that for each $\alpha \in (0, 1]$ and each parabolic cylinder $Q \subset \bR^{d+1}$, the parabolic H\"{o}lder semi-norm of the function $u$ defined in $Q$ is
\[
[[u]]_{C^{\alpha/2, \alpha}(Q)} = \sup_{\substack{(t,x), (s,y) \in Q\\ (t,x) \not=(s,y)}} \frac{|u(t,x)-u(s,y)|}{|t-s|^{\alpha/2} + |x-y|^{\alpha}},
\]
and its H\"{o}lder norm is
\[
\|u\|_{C^{\alpha/2, \alpha}(Q)} = \|u\|_{L_\infty(Q)} + [[u]]_{C^{\alpha/2, \alpha}(Q)}.
\]
The following lemma is needed later in this paper.
\begin{lemma}
                    \label{lem1.3}
Under the assumptions of Lemma \ref{lem1.2}, we have
\begin{equation}
                                    \label{eq9.40}
\|\omega\|_{C^{1/2,1}(Q_{1/2}^+)}\le N(d,\nu,q_0)\|\omega\|_{L_{q_0}(Q_{1}^+)},
\end{equation}
and for any $\alpha\in (0,1)$,
\begin{equation*}
\|D\omega\|_{C^{\alpha/2,\alpha}(Q_{1/2}^+)}\le N(d,\nu,\alpha, q_0)\|D\omega\|_{L_{q_0}(Q_{1}^+)},
\end{equation*}
where $\omega = (\omega_{kl})_{k,l=1}^d$ is defined in \eqref{omega-def}.
\end{lemma}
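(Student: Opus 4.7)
\textbf{Proof plan for Lemma \ref{lem1.3}.}

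The approach exploits two facts: the coefficients $\tilde a(t)$ in the vorticity equation \eqref{eq8.05} are independent of $x$, so tangential differentiation is free at every order; and both boundary conditions in \eqref{bdr-vorticity}---conormal for $k, l < d$, homogeneous Dirichlet otherwise---are preserved under differencing in the $x'$-direction.

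\emph{Step 1: tangential bootstrapping.} Taking finite differences in $x'$ of $\omega_{kl}$ and passing to the limit, each tangential derivative $D_{x'}^m \omega_{kl}$ is again a weak solution of the same boundary-value problem \eqref{eq8.05}--\eqref{bdr-vorticity}. Iterating the local boundary $\cH^1_{q_0}$-estimate \eqref{eq7.57} on a nested sequence of cylinders between $Q_{1/2}^+$ and $Q_1^+$, we obtain for every $m\ge 0$ and $1/2 \le r' < r \le 1$,
\[
\|D_{x'}^m \omega\|_{L_{q_0}(Q_{r'}^+)} + \|D D_{x'}^m \omega\|_{L_{q_0}(Q_{r'}^+)} \le N(d,\nu,q_0,m,r,r')\,\|\omega\|_{L_{q_0}(Q_r^+)}.
\]

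\emph{Step 2: normal and time derivatives via the equation.} Since $\tilde a$ depends only on $t$, equation \eqref{eq8.05} is equivalent to the non-divergence form $\partial_t \omega = a_{ij}(t)\,D_{ij}\omega$; using $a_{dd}(t)\ge \nu$, we solve algebraically
\[
D_d^2 \omega = a_{dd}(t)^{-1}\Big(\partial_t \omega - \sum_{(i,j)\neq(d,d)} a_{ij}(t)\,D_{ij}\omega\Big),
\]
in which only one $D_d$-derivative appears on the right. Applying $D_{x'}^m$ to this identity and iterating, combined with $\partial_t D_{x'}^m \omega = a_{ij}(t)\,D_{ij} D_{x'}^m \omega$ (higher $t$-derivatives are justified by a standard mollification in $t$ of $\tilde a$), the Step~1 bounds bootstrap to $L_{q_0}$-control of $\partial_t^\ell D^\beta \omega$ for arbitrarily many derivatives on any cylinder slightly interior to $Q_1^+$.

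\emph{Step 3: Sobolev embedding.} Parabolic Sobolev embedding on $Q_{1/2}^+$ now gives $\omega, D\omega, \partial_t\omega \in L_\infty$, which yields \eqref{eq9.40}. For the second estimate, note that $D_{x'}\omega$ is itself a solution of the same boundary-value problem, so Steps~1--2 applied to it---with $D_d \omega$ recovered from the equation in terms of $D_{x'}\omega$ and $D_{x'} D_d \omega$---produce $L_{q_0}$-control starting from $\|D\omega\|_{L_{q_0}(Q_1^+)}$; Sobolev embedding then upgrades this to $D\omega \in C^{\alpha/2,\alpha}(Q_{1/2}^+)$ for every $\alpha<1$. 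The principal obstacle is the interplay between the normal derivative $D_d$ and the boundary conditions: $D_d\omega$ preserves neither the conormal nor the Dirichlet condition, which is precisely why one extracts $D_d^2\omega$ algebraically from the equation in Step~2 rather than differentiating the boundary-value problem in the normal direction.
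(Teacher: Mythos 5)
Your plan is close in spirit to the alternative sketched in the paper's Remark following Lemma \ref{lem1.3} (working directly in $Q_1^+$ with the Dirichlet/conormal boundary-value problem for the vorticity, rather than the even/odd extension to the full cylinder used in the main proof). Step~1 (tangential differencing, preservation of boundary conditions, iterating the $\cH^1_{q_0}$ estimate) is sound. But there is a genuine gap in Step~2.

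The algebraic identity $D_d^2\omega = a_{dd}(t)^{-1}\bigl(\partial_t\omega - \sum_{(i,j)\ne(d,d)}a_{ij}D_{ij}\omega\bigr)$ is only useful \emph{after} you know that $\partial_t\omega$ and $D_d^2\omega$ separately exist in $L_{q_0,\loc}$, i.e.\ that $\omega\in W^{1,2}_{q_0,\loc}(Q_1^+)$. That is precisely what must be established, and it does not follow from Step~1: the $\cH^1_{q_0}$ estimate controls $\omega$ and $D\omega$ but leaves $\partial_t\omega$ only in a negative Sobolev space, and tangential differencing yields $D_{x'}^m\omega$ and $D_dD_{x'}^m\omega$ but never $D_d^2\omega$ or $\partial_t\omega$. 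The equation is one relation between two as-yet-uncontrolled quantities, so you cannot ``solve algebraically.'' The aside about mollifying $\tilde a$ in $t$ does not close this: mollifying the coefficients gives a different equation, and transferring the resulting estimates back to $\omega$ requires a uniqueness statement in the class $\cH^1_{q_0}$ that identifies $\omega$ with the $W^{1,2}_{q_0}$ strong solution. The paper supplies exactly this missing ingredient by invoking $W^{1,2}_p$ solvability for non-divergence parabolic equations with coefficients measurable in $(t,x_d)$ (with $a_{dd}$ measurable in $t$ only) together with the unique solvability of the divergence-form problem, either after the even/odd extension (main proof) or via the boundary $W^{1,2}_q$ estimate for the Dirichlet/Neumann problem in the half-cylinder plus uniqueness (the Remark). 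Once that $W^{1,2}_{q_0}$ regularity is in hand, your bootstrap and Sobolev-embedding Steps~1--3 do go through, including the observation that $D_d\omega_{dl}$ (odd case) does not satisfy the same boundary-value problem and must instead be handled via a $W^{1,2}_p$ estimate for $\omega_{dl}$ with large $p$ and embedding, which is where the restriction $\alpha<1$ in the second estimate comes from.
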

\begin{proof} Let
$\tilde{u}_k(t,x)$ be the even extensions of $u_k(t,x)$ with respect to $x_d$, $k=1,\ldots,d-1$, and $\tilde{u}_d(t,x)$ be the odd extension of $u_d(t,x)$ with respect to $x_d$.
Further, let $\tilde{p}$ be the even extension of $p$ in $x_d$.
Set
$$
\bar{a}_{ij} = a_{ij}(t) \quad \text{for} \  i,j=1,\ldots,d-1,
\quad \bar{a}_{dd}=a_{dd}(t),
$$
$$
\bar{a}_{jd} = \bar{a}_{dj} = \left\{
\begin{aligned}
a_{jd}(t) &\quad x_d > 0,
\\
- a_{jd}(t) &\quad x_d < 0,
\end{aligned}
\right.
\quad \text{for} \  j=1,\ldots,d-1.
$$
Then by the boundary conditions on $u$, we have $\tilde{u} \in W_{q_0}^{1,2}(Q_1)^d$, $ \tilde{p} \in W_1^{0,1}(Q_1)$, and
$$
\tilde{u}_t - \bar{a}_{ij}(t,x_d) D_{ij} \tilde{u} + \nabla \tilde{p} = 0 \quad \text{in} \  Q_1.$$

We again denote by $\omega_{kl}$ the extensions of those $\omega_{kl}$ defined in the proof of Lemma \ref{lem1.2} with respect to $x_d$.
That is, $\omega_{kl}$, $k,l \in \{1,\ldots,d-1\}$, is even and $\omega_{dl}$, $l \in \{1,\ldots,d-1\}$,  is odd with respect to $x_d$, so
$$
\omega_{kl} = \partial_k \tilde{u}_l - \partial_l \tilde{u}_k
$$
in $Q_1$ for $k,l \in \{1,2,\ldots,d\}$.
It is easily seen that $\omega_{kl}$ satisfies the following equation in divergence form:
\begin{equation*}
\partial_t \omega_{kl} - \Div (\tilde{a}^{\top} \nabla \omega_{kl})   =  0
\end{equation*}
in $Q_1$, $k,l \in \{1,2,\ldots,d\}$,
where
\[
\begin{split}
\tilde{a}_{dj} & =\tilde{a}_{dj}(t, x_d)
= \left\{
\begin{array}{ll}
 2a_{dj}(t) & \quad x_d \geq 0,
\\
- 2a_{dj}(t) & \quad x_d <0,
\end{array}
\right.
\\
\tilde{a}_{jd} & = \tilde{a}_{jd}(t, x_d) = 0,
\end{split}
\]
for $j =1, 2,\ldots, d-1$,
and
$$
\tilde{a}_{ij} = a_{ij}(t), \quad \text{and} \quad \tilde{a}_{dd} = a_{dd}(t)
$$
for $i, j \in \{1, 2,\ldots, d-1\}$.

If we know a priori that $\omega_{kl}$ is sufficiently smooth, then $\omega_{kl}$ also  satisfies the non-divergence form equation
\begin{equation}
							\label{eq0712_01}
\partial_t \omega_{kl} - \bar{a}_{ij}(t,x_d) D_{ij} \omega_{kl} = 0
\end{equation}
in $Q_1$.
While checking this, we use the identity
\begin{equation}
							\label{eq0717_01}
D_d \left( \bar{a}_{dj}(t,x_d) D_{dj} \tilde{u}_l \right) = \bar{a}_{dj}(t,x_d) D_{dj} D_d \tilde{u}_l
\end{equation}
for $l = 1, \ldots, d-1$,
which follows from the definition of $\bar{a}_{dj}$ and the evenness of $\tilde{u}_l$ with respect to $x_d$.
Indeed, one can show that $\omega_{kl}$ belongs to $W_{q_0}^{1,2}(Q_r)$ for any $r \in (0,1)$ and  satisfies \eqref{eq0712_01} in $Q_r$ by using the $W^{1,2}_p$ solvability of parabolic equations in non-divergence form with coefficients being measurable functions of $(t,x_d)$ except for $a_{dd}$, which is a measurable function of $t$ only (cf. \cite{MR2550072,MR2833589}), as well as the unique solvability of the divergence form equation for $\omega_{kl}$ (cf. \cite{MR2764911}).

Once we check that $\omega_{kl} \in W_{q_0}^{1,2}(Q_r)$, $r \in (0,1)$, satisfies \eqref{eq0712_01}, we use the parabolic Sobolev embedding theorem combined with bootstrap and iterations to obtain \eqref{eq9.40}.

Since the coefficients in \eqref{eq0712_01} are independent of $x'$, by differentiating \eqref{eq0712_01} in $x'$ (in fact, using finite-difference quotients), we find that $D_{x'}\omega_{kl} \in W_{q_0}^{1,2}(Q_r)$, $r \in (0,1)$, also satisfies \eqref{eq0712_01}. This together with \eqref{eq9.40} shows that
\begin{equation*}
\|D_{x'}\omega\|_{C^{1/2,1}(Q_{1/2}^+)}\le N(d,\nu,q_0)\|D_{x'}\omega\|_{L_{q_0}(Q_{1}^+)}.
\end{equation*}

It remains to estimate $D_d \omega$.
For $k,l \in \{1,\ldots,d-1\}$, using the evenness of $\omega_{kl}$ and the unique solvability of the non-divergence and divergence form equations as above, we notice that $D_d\omega_{kl}$ belongs to $W_{q_0}^{1,2}(Q_r)$, $r \in (0,1)$, and satisfies
$$
\partial_t D_d \omega_{kl} - \bar{a}_{ij}(t,x_d) D_{ij} D_d \omega_{kl} = 0
$$
in $Q_r$.
Then, by the same reasoning as above, we obtain
$$
\|D_{d}\omega_{kl}\|_{C^{1/2,1}(Q_{1/2}^+)}\le N(d,\nu,q_0)\|D_{d}\omega_{kl}\|_{L_{q_0}(Q_{1}^+)}.
$$

Finally, for $l=1,\ldots,d-1$, $\omega_{dl}$ satisfies \eqref{eq0712_01} in $Q_1^+$ with the Dirichlet boundary condition on $Q_1 \cap \{(t,x',x_d) \in \bR^{d+1}: x_d = 0\}$.
Thus, by the boundary $W^{1,2}_p$ estimate with $p>(d+2)/(1-\alpha)$, the parabolic Sobolev embedding theorem, and the boundary Poincar\'e inequality, we have
$$
\|D_d\omega_{dl}\|_{C^{\alpha/2,\alpha}(Q_{1/2}^+)}
\le N\|\omega_{dl}\|_{L_{q_0}(Q_{1}^+)}
\le N\|D_d\omega_{dl}\|_{L_{q_0}(Q_{1}^+)},
$$
where $N=N(d,\nu,\alpha, q_0)$. The lemma is proved.
\end{proof}
\begin{remark}
Lemma \ref{lem1.3} can also be proved by using the boundary $W^{1,2}_q$-estimate with either the Dirichlet or Neumann boundary condition. We give a sketch below. Recall that $\omega_{kl}$ satisfies the divergence form equation \eqref{eq8.05} with either the conormal or Dirichlet boundary condition. Since the coefficients are independent of $x$, we can use the uniqueness of strong solutions in the half-space to show that $\omega_{kl}$ is also in $W^{1,2}_q(Q_{1/2}^+)$ for any $q<\infty$. See, for instance, \cite{MR2550072}. To obtain the estimates in Lemma \ref{lem1.3}, it remains to use the parabolic Sobolev embedding theorem.
\end{remark}

\section{A solvability result: proof of Theorem \ref{existence-lemma}}      \label{sec4}

In this section, we prove Theorem \ref{existence-lemma}, which demonstrates the existence of a solution to the system \eqref{u-sol.eqn-whole} with the boundary conditions \eqref{Lions-bdr-c}.
Henceforth, we denote
\[
\bR^d_T = (0, T) \times \bR^d.
\]
We first give a lemma.

\begin{lemma}
							\label{lem0715_1}
Let $T > 0$, $q_0 \in (1,\infty)$, $q_1 \in (1,d)$, $h \in L_{q_0}(\bR^d_T) \cap L_{q_1}(\bR^d_T)$, and $k \in \{1, \ldots, d\}$.
We define
$$
v_k(t,x) = \int_{\bR^d} D_k \Phi(x-y) h(t,y) \, dy
$$
in $\bR^d_T$, where $\Phi(\cdot)$ is the fundamental solution of the Laplace equation in $\bR^d$.
Then we have the following.
\begin{enumerate}
\item $\displaystyle \int_0^T \|v_k(t,\cdot)\|_{L_{q_1^*}(\bR^d)}^{q_1} \, dt < \infty$ and $D_x v_k \in L_{q_0}(\bR^d_T)$ with the estimates
\begin{equation}
							\label{eq0715_01}
\left( \int_0^T \|v_k(t,\cdot)\|_{L_{q_1^*}(\bR^d)}^{q_1} \, dt \right)^{\frac 1 {q_1}} \leq N(d,q_1) \|h\|_{L_{q_1}(\bR^d_T)},
\end{equation}
\begin{equation}
							\label{eq0715_08}
\|D_x v_k\|_{L_{q_0}(\bR^d_T)} \leq N(d,q_0) \|h\|_{L_{q_0}(\bR^d_T)},
\end{equation}
where $q_1^* = d q_1/(d-q_1)$.
We also have
\begin{equation}
							\label{eq0715_14}
\sum_{k=1}^d D_k v_k(t,x) = h(t,x) \quad \text{in} \  \bR^d_T.
\end{equation}

\item If $D_x h \in L_{q_0}(\bR^d_T)$, then
$D^2_x v_k \in L_{q_0}(\bR^d_T)$ with
\begin{equation}
							\label{eq0715_15}
\Delta v_k(t,x) = D_k h(t,x) \quad \text{in} \  \bR^d_T,
\end{equation}
and the following estimate holds:
\begin{equation}
							\label{eq0715_16}
\|D^2_x v_k\|_{L_{q_0}(\bR^d_T)} \leq N(d,q_0) \|D_k h\|_{L_{q_0}(\bR^d_T)}.
\end{equation}

\item If $D_x h \in L_{q_0}(\bR^d_T) \cap L_{q_1}(\bR^d_T)$, it holds that
\begin{equation}
							\label{eq0715_17}
D v_k(t,x) = \int_{\bR^d} D_k \Phi(x-y) D h(t,y) \, dy \quad \text{in} \  \bR^d_T.
\end{equation}

\end{enumerate}
\end{lemma}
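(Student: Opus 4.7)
The plan is to invoke classical potential theory: the Hardy--Littlewood--Sobolev inequality for the Riesz-potential-type bound, Calder\'on--Zygmund theory for the first-derivative estimate, and a density argument to justify the pointwise identities when $h$ is only as regular as stated. I would prove part (1) first, and then obtain parts (3) and (2) as consequences.

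For part (1), the bound \eqref{eq0715_01} uses the pointwise estimate $|D_k\Phi(x)| \le N(d)\,|x|^{1-d}$, which gives $|v_k(t,\cdot)| \le N\,I_1(|h(t,\cdot)|)$, where $I_1$ is the Riesz potential of order one. Applying the Hardy--Littlewood--Sobolev inequality slice-by-slice in $t$ (valid because $q_1 \in (1,d)$ and $1/q_1^* = 1/q_1 - 1/d$) yields $\|v_k(t,\cdot)\|_{L_{q_1^*}(\bR^d)} \le N\|h(t,\cdot)\|_{L_{q_1}(\bR^d)}$; raising to the $q_1$-power and integrating in $t$ gives \eqref{eq0715_01}. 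For \eqref{eq0715_08}, $D_jD_k\Phi$ is a standard Calder\'on--Zygmund kernel (up to a constant multiple of $\delta_{jk}$ times the Dirac mass at the origin), so for each $t$ the operator $h(t,\cdot) \mapsto D_jv_k(t,\cdot)$ is bounded on $L_{q_0}(\bR^d)$ with constant independent of $t$, and integration in $t$ yields \eqref{eq0715_08}. The divergence identity \eqref{eq0715_14} follows from $\Delta \Phi = \delta_0$ distributionally: testing $\sum_k D_kv_k$ against $\psi \in C_c^\infty(\bR^d_T)$ and using Fubini (justified by the preceding estimates),
\[
\iint \sum_k D_k v_k\,\psi\,dx\,dt = -\iint h(t,y)\Big[\sum_k \int D_k\Phi(z)\,D_k\psi(t,z+y)\,dz\Big]dy\,dt = \iint h\,\psi\,dx\,dt,
\]
where the inner bracket equals $-\psi(t,y)$ by the distributional identity $\langle\nabla\Phi,\nabla g\rangle = -g(0)$.

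For part (3), I would approximate $h$ by $h_n \in C_c^\infty(\bR^d_T)$ with $h_n \to h$ and $Dh_n \to Dh$ in $L_{q_0} \cap L_{q_1}$. For each smooth $h_n$, moving the derivative $D_j^x$ from the kernel onto $h_n$ via $D_j^xD_k\Phi(x-y) = -D_j^yD_k\Phi(x-y)$ and integrating by parts in $y$ (valid by the compact support of $h_n$) yields
\[
D_j v_k^n(t,x) = -\int D_j^y D_k\Phi(x-y)\,h_n(t,y)\,dy = \int D_k\Phi(x-y)\,D_jh_n(t,y)\,dy,
\]
which is \eqref{eq0715_17} for $h_n$. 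Passing to the limit on both sides using the continuities furnished by part (1) produces \eqref{eq0715_17} for $h$. For part (2), given \eqref{eq0715_17} one applies \eqref{eq0715_08} with $D_jh \in L_{q_0}(\bR^d_T)$ in place of $h$, which yields $D_iD_jv_k \in L_{q_0}$ together with the bound \eqref{eq0715_16}. The identity \eqref{eq0715_15} is the distributional consequence of $\Delta D_k\Phi = D_k\delta_0$ and upgrades to a genuine $L_{q_0}$ identity because $D_kh \in L_{q_0}$ by hypothesis; equivalently, one may take the divergence of \eqref{eq0715_17} and invoke \eqref{eq0715_14} with $D_jh$ replacing $h$.

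The main obstacle is justifying all the formal distributional identities and integrations by parts when $h$ has no a priori decay or smoothness; this is handled by the approximation scheme described above, with the uniform $L^p$ continuity furnished by part (1) supplying the convergence needed to pass to the limit.
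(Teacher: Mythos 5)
Your arguments for parts (1) and (3), and your duality derivation of \eqref{eq0715_14}, are sound and match the paper's approach in spirit (the paper runs \eqref{eq0715_14} and \eqref{eq0715_08} through the same density scheme rather than a direct duality pairing, but that is a cosmetic difference).

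The step for part (2) has a genuine gap. You derive \eqref{eq0715_15}--\eqref{eq0715_16} from \eqref{eq0715_17}, i.e., from part (3). But part (3) assumes $D_x h \in L_{q_0}(\bR^d_T)\cap L_{q_1}(\bR^d_T)$, while part (2) only assumes $D_x h \in L_{q_0}(\bR^d_T)$. The identity \eqref{eq0715_17} really does need $D_xh\in L_{q_1}$ with $q_1<d$: the right-hand side $\int D_k\Phi(x-y)\,D h(t,y)\,dy$ is a Riesz potential of $Dh$ and its absolute convergence is exactly what Hardy--Littlewood--Sobolev gives for $Dh\in L_{q_1}$; for $Dh$ only in $L_{q_0}$ with $q_0\ge d$ the integral may not be defined pointwise. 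So as written your proof of part (2) proves it only under the stronger hypothesis of part (3). A second, smaller issue: even granting \eqref{eq0715_17}, applying \eqref{eq0715_08} with $D_jh$ in place of $h$ bounds $\|D_x D_j v_k\|_{L_{q_0}}$ by $N\|D_jh\|_{L_{q_0}}$; summing in $j$ yields $\|D^2_x v_k\|_{L_{q_0}}\le N\|D h\|_{L_{q_0}}$, not the sharper $\|D_k h\|_{L_{q_0}}$ asserted in \eqref{eq0715_16}.

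The paper avoids both issues by never passing through \eqref{eq0715_17}: take $h^m\in C_0^\infty$ with $h^m\to h$ in $L_{q_0}\cap L_{q_1}$ and $D_xh^m\to D_xh$ in $L_{q_0}$ only. For smooth compactly supported $h^m$, integration by parts gives the alternative representation $v_k^m=\int\Phi(x-y)D_kh^m(t,y)\,dy$, so $D_iD_jv_k^m$ is a double Riesz transform applied to $D_kh^m$, yielding $\|D^2_xv_k^m\|_{L_{q_0}}\le N\|D_kh^m\|_{L_{q_0}}$ with only the $k$-th derivative on the right. Since $D_kh^m$ is Cauchy in $L_{q_0}$, so is $D^2_xv_k^m$; its $L_{q_0}$ limit must be $D^2_xv_k$ by the convergence $v_k^m\to v_k$ already established in part (1), and passing to the limit in $\Delta v_k^m=D_kh^m$ gives \eqref{eq0715_15}. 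This circumvents the need to make pointwise sense of $\int D_k\Phi(x-y)\,Dh(t,y)\,dy$ when $Dh\notin L_{q_1}$. You should replace the ``part (3) $\Rightarrow$ part (2)'' step with this direct approximation argument.
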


\begin{proof}
For a.e. $t \in [0,T]$, $h(t,\cdot) \in L_{q_1}(\bR^d)$.
Thus, for a.e. $t \in [0,T]$, by the Hardy--Littlewood--Sobolev theorem of fractional integration (see \cite[Chapter V]{MR0290095}), we have
$$
\|v_k(t,\cdot)\|_{L_{q_1^*}(\bR^d)} \leq N(d,q_1)\|h(t,\cdot)\|_{L_{q_1}(\bR^d)}.
$$
By integrating both sides of the above inequality with respect to $t \in [0,T]$, we obtain \eqref{eq0715_01}.

Now we find $h^m(t,x) \in C_0^\infty\left([0,T] \times \bR^d\right)$ such that
\begin{equation}
							\label{eq0716_01}
\|h^m - h\|_{L_{q_1}(\bR^d_T)} + \|h^m - h\|_{L_{q_0}(\bR^d_T)} \to 0
\end{equation}
as $m \to \infty$.
For each $m = 1,2,\ldots$, we set
\begin{equation}
							\label{eq0715_04}
v_k^m(t,x) := \int_{\bR^d} D_k \Phi(x-y) h^m(t,y) \, dy.
\end{equation}
Then, again by the Hardy--Littlewood--Sobolev theorem,
we have
\begin{equation}
							\label{eq0715_05}
\int_0^T \|v_k^m(t,\cdot) - v_k(t,\cdot)\|_{L_{q_1^*}(\bR^d)}^{q_1} \, dt \to 0
\end{equation}
as $m \to \infty$.
Integration by parts applied to \eqref{eq0715_04} gives
\begin{equation}
							\label{eq0715_02}
v_k^m(t,x) = \int_{\bR^d} \Phi(x-y) D_k h^m(t,y) \, dy,
\end{equation}
from which it follows that
\begin{equation}
							\label{eq0715_06}
\Delta v_k^m (t,x) = D_k h^m(t,y).
\end{equation}
Note that
\begin{equation}
							\label{eq0715_03}
v_k^m(t,x) = D_k \int_{\bR^d} \Phi(x-y) h^m(t,y) \, dy
\end{equation}
and
\begin{equation}
							\label{eq0715_18}
D v_k^m(t,x) = \int_{\bR^d} D_k \Phi(x-y) D h^m(t,y) \, dy = DD_k \int_{\bR^d} \Phi(x-y) h^m(t,y) \, dy.
\end{equation}
Thus,
\begin{equation}
							\label{eq0715_13}
\sum_{k=1}^d D_k v_k^m(t,x) = \Delta \int_{\bR^d} \Phi(x-y) h^m(t,y) \, dy = h^m(t,x)
\end{equation}
in $\bR^d_T$.
By applying to \eqref{eq0715_03} the fact that the double Riesz transform is bounded in $L_p(\bR^d)$, $1<p<\infty$, (see \cite[Chapter III]{MR0290095}) and by integrating both sides of the obtained inequality in $t$, we arrive at
$$
\|D_x v_k^m\|_{L_{q_0}(\bR^d_T)} \leq N(d,q_0) \|h^m\|_{L_{q_0}(\bR^d_T)}.
$$
Then, \eqref{eq0715_08} and \eqref{eq0715_14} follow from this inequality, \eqref{eq0715_13}, \eqref{eq0716_01}, and \eqref{eq0715_05}.

If $D_x h \in L_{q_0}(\bR^d_T)$, we find $h^m \in C_0^\infty\left([0,T] \times \bR^d\right)$ such that
\begin{equation}
							\label{eq0715_19}
\|h^m - h\|_{L_{q_1}(\bR^d_T)} + \|h^m - h\|_{L_{q_0}(\bR^d_T)} + \|D_x h^m - D_x h\|_{L_{q_0}(\bR^d_T)} \to 0
\end{equation}
as $m \to \infty$.
Then, by applying the boundedness in $L_p(\bR^d)$ of the double Riesz transform to \eqref{eq0715_02}, we obtain
$$
\|D_x^2 v^m\|_{L_{q_0}(\bR^d_T)} \leq N(d,q_0) \|D_k h^m\|_{L_{q_0}(\bR^d_T)}.
$$
Using this estimate, \eqref{eq0715_05}, \eqref{eq0715_06}, and \eqref{eq0715_19}, we prove \eqref{eq0715_15} and \eqref{eq0715_16}.

Finally, to prove \eqref{eq0715_17}, we use the Hardy--Littlewood--Sobolev theorem as well as the first equality in \eqref{eq0715_18} with $h^m \in C_0^\infty\left([0,T] \times \bR^d\right)$ satisfying \eqref{eq0715_19} as well as
$$
\|D_x h^m - D_x h\|_{L_{q_1}(\bR^d_T)} \to 0
$$
as $m \to \infty$.
The lemma is proved.
\end{proof}

\begin{proposition}
							\label{prop0718_1}
Let $T >0$, $q_0 \in (1, \infty)$.
Assume that $a_{ij} = a_{ij}(t)$ for $t \in (0,T)$.
Let $f \in L_{q_0}((0, T) \times \bR_{+}^d)^d$ and $g\in L_{q_0}((0, T) \times \bR_{+}^d)$, $Dg \in L_{q_0}((0, T) \times \bR_{+}^d)^d$, $g(0,\cdot) = 0$, and $g_t = \Div(G)$ in the sense of \eqref{eq0702_01} for some vector field
$$
G = (G_1, G_2,\ldots, G_d) \in L_{q_0}((0, T) \times \bR_{+}^d)^d.
$$
Additionally, assume that $g$ and $G$ vanish for large $|x|$ uniformly in $t \in [0,T]$ and $f \in C_0^\infty((0,T) \times \bR^d_{+})^d$.
Then, there exists a solution $(u, p)$ of \eqref{u-sol.eqn-whole}--\eqref{Lions-bdr-c} such that
\[
\begin{split}
& u \in L_\infty\left((0,T), L_{q_0}(\bR^d_+)\right), \quad u_t, Du, D^2 u \in L_{q_0}((0, T) \times \bR_{+}^d), \\
& p \in L_{q_0}((0, T), L_{q_0, \textup{loc}}(\bR^d_+)), \quad \nabla p  \in L_{q_0}((0, T) \times \bR_{+}^d )
\end{split}
\]
and that satisfies \eqref{eq0715_07} and \eqref{eq0715_09}.
\end{proposition}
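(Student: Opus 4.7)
The plan is to construct $(u,p)$ explicitly, sidestepping the pressure at the outset by first solving for the vorticity in the whole space and then recovering $u$ via Newtonian potentials of the Laplacian.

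First I would extend the data from $\bR^d_+$ to $\bR^d$ using the parities dictated by the Lions boundary conditions: extend $f_k, G_k$ ($k<d$) and $g$ evenly in $x_d$, and $f_d, G_d$ oddly; the sought $u$ will then be even in its first $d-1$ components and odd in the last. Let $\bar a_{ij}(t,x_d)$ denote the coefficients obtained by the sign-flipping construction used in the proof of Lemma \ref{lem1.3}. Taking the formal curl of the extended Stokes system, the candidate vorticity $\omega_{kl}$ should satisfy a scalar parabolic equation
\begin{equation*}
(\omega_{kl})_t - \Div\bigl(\tilde a(t,x_d)^{\top}\nabla \omega_{kl}\bigr) = D_k \bar f_l - D_l \bar f_k \quad \text{in} \ \bR^d_T,
\end{equation*}
with zero initial data, where $\tilde a$ is the related coefficient matrix from the proof of Lemma \ref{lem1.2}. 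The chosen parities make the right-hand side have the same parity as $\omega_{kl}$ itself (even for $k,l<d$, odd otherwise). I would solve this equation in $W^{1,2}_{q_0}(\bR^d_T)$ using the $L_p$-solvability theory for parabolic equations with coefficients only measurable in $(t,x_d)$ from \cite{MR2764911} in divergence form and \cite{MR2550072, MR2833589} in non-divergence form, obtaining $\omega, D\omega, D^2\omega, \omega_t \in L_{q_0}$.

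Given $\omega$, I recover $u$ from the identity $\Delta \bar u_i = D_i \bar g + \sum_k D_k \omega_{ki}$ via Lemma \ref{lem0715_1}, setting
\begin{equation*}
\bar u_i(t,x) = -\int_{\bR^d} D_i \Phi(x-y)\, \bar g(t,y)\,dy - \sum_{k=1}^d \int_{\bR^d} D_k \Phi(x-y)\, \omega_{ki}(t,y)\,dy.
\end{equation*}
Parts (1)--(3) of Lemma \ref{lem0715_1} give $Du, D^2 u \in L_{q_0}$ with the bounds in \eqref{eq0715_07} (the compact $x$-support of $g$ and the decay of $\omega$ supply the required $L_{q_1}$ integrability with $q_1<d$). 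The parities of $\omega$ imply $\bar u_k$ is even for $k<d$ and $\bar u_d$ is odd, so the Lions boundary conditions hold upon restriction to $\bR^d_+$, and $\Div \bar u = \bar g$ by \eqref{eq0715_14}. For the pressure I define $\nabla \bar p := \bar f - \bar u_t + \bar a_{ij}(t,x_d) D_{ij}\bar u$, which is curl-free by construction because we have already enforced the vorticity equation. Taking the divergence of the Stokes equation and using $g_t = \Div G$ yields $\Delta \bar p = \Div(\bar f - \bar G) + \bar a_{ij} D_{ij} \bar g$; then the $L_{q_0}$-boundedness of the Riesz transforms gives the $\nabla p$ estimate in \eqref{eq0715_07}, and the estimate for $u_t$ in \eqref{eq0715_09} follows by reading off $u_t = f + a_{ij}D_{ij}u - \nabla p$. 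Finally, $u \in L_\infty\bigl((0,T); L_{q_0}(\bR^d_+)\bigr)$ follows from $u(0,\cdot)=0$ (a consequence of $\omega(0,\cdot)=0$ and $g(0,\cdot)=0$) by writing $\|u(t,\cdot)\|_{L_{q_0}} \leq \int_0^t \|u_s(s,\cdot)\|_{L_{q_0}}\,ds$ and applying H\"older.

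The main obstacle I expect is solving the vorticity equation in the whole space: the extended coefficients $\tilde a(t,x_d)$ have arbitrary (not small) jumps at $x_d=0$ in their off-diagonal rows, so standard VMO-based $L_p$-theory does not apply. The rescue is precisely the solvability theory of \cite{MR2550072, MR2833589, MR2764911} for coefficients merely measurable in $(t,x_d)$. Careful bookkeeping is also required to pass between the divergence and non-divergence forms of the vorticity equation using the identity \eqref{eq0717_01} (so that the recovered pressure is genuinely a gradient and not just a formal expression), and to verify that the weak identity $g_t = \Div G$ in \eqref{eq0702_01} correctly encodes the trace condition $G_d|_{x_d=0}=0$ that is needed to validate the odd extension of $G_d$ across the boundary.
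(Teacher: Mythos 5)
Your overall strategy---parity extensions, the vorticity equation with coefficients merely measurable in $(t,x_d)$, recovery of $u$ from Newtonian potentials via Lemma~\ref{lem0715_1}, and then defining $\nabla p$ from the Stokes equation---is exactly what the paper does, and your remarks about the discontinuous coefficients and the trace condition on $G_d$ are on target. But there are two substantive gaps.

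First, your treatment of the $u_t$ and $\nabla p$ estimates does not work. You propose to first bound $\nabla \bar p$ via the Poisson equation $\Delta\bar p = \Div(\bar f - \bar G) + \bar a_{ij}D_{ij}\bar g$ and then read off $u_t$ from the momentum equation. The term $\bar a_{ij}D_{ij}\bar g$ is not controlled in $L_{q_0}$, since only $Dg\in L_{q_0}$ is assumed; worse, for the even extension $\bar g$, the distributional second derivative $D_{dd}\bar g$ contains a surface Dirac measure on $\{x_d=0\}$ (the normal derivative of $g$ has a jump), and the same happens to $D_d\bar a_{dj}$ for $j<d$, so one cannot simply integrate by parts and invoke Riesz transforms. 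Moreover, even if this route closed, you would only get $\|u_t\|\lesssim\|f\|+\|Dg\|+\|G\|$, which is strictly weaker than \eqref{eq0715_09}, where the $\|Dg\|$ term is absent. The paper instead proves \eqref{eq0715_09} \emph{directly} (its Step~2): it writes $\int \tilde u_l\varphi_t$ using the Newtonian-potential representation of $\tilde u_l$, substitutes $\partial_t\tilde g = \Div\tilde G$ for the $\tilde g$-part, and substitutes the \emph{divergence-form} vorticity equation \eqref{eq0625_01} for the $\omega_{kl}$-part, expressing $u_t$ as a sum of Riesz-transform images of $\tilde G$, $\tilde a_{ji} D_j\omega_{kl}$, and $\tilde f$; then $\nabla p := \tilde f - u_t + \bar a_{ij}D_{ij}\tilde u$ is estimated last. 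Your order is reversed and the Poisson step fails.

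Second, the claim that $h := \bar f - \bar u_t + \bar a_{ij}D_{ij}\bar u$ is ``curl-free by construction'' hides a real verification. One must show $\partial_k\tilde u_l - \partial_l\tilde u_k = \omega_{kl}$, and this in turn requires the Jacobi-type identity $\partial_k\omega_{jl} + \partial_j\omega_{lk} + \partial_l\omega_{kj} = 0$, which the paper proves by differentiating the non-divergence-form vorticity equation and invoking uniqueness of its strong solution; the even/odd structure limits which directions $r$ one may differentiate in, and the case $k=j=d$ must be handled separately using the antisymmetry $\omega_{dl}=-\omega_{ld}$. You should also spell out how to produce the function $\bar p$ itself: the paper mollifies $h$ in $(t,x)$, constructs a primitive by line integration in $x$, proves uniform local $L_{q_0}$ bounds via Poincar\'e, and passes to a weak limit. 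Simply stating that a curl-free field in $L_{q_0}(\bR^d_T)$ is a gradient is not automatic at this level of regularity.

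Everything else in the outline---the extensions, the choice between solving the divergence or non-divergence form of the vorticity equation first and reconciling them through \eqref{eq0717_01}, Lemma~\ref{lem0715_1} for the $Du$ and $D^2u$ bounds in \eqref{eq0715_07}, and the final $L_\infty(L_{q_0})$ conclusion from $u(0,\cdot)=0$ and $u_t\in L_{q_0}$---matches the paper.
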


\begin{proof}
Set $\bar{a}_{ij}$ to be as defined in the proof of Lemma \ref{lem1.3}.
Our goal is to construct a strong solution $(\tilde{u}, \tilde{p})$ in $\bR^d_T = (0,T) \times \bR^d$ of the Stokes system
\begin{equation} \label{extended-solution-St}
\left\{
\begin{array}{cccl}
\tilde{u}_t -\bar{a}_{ij} D_{ij} \tilde{u} + \nabla \tilde{p} & = & \tilde{f}  & \quad \text{in} \ \bR_{T}^d,\\
\Div(\tilde{u}) & = & \tilde{g} & \quad \text{in} \  \bR_{T}^d,\\
 \tilde{u}(0,x) & =& 0& \quad \text{for} \ x \in \bR^d,
\end{array} \right.
\end{equation}
where $\tilde{g}(t,x)$ and $\tilde{f}(t,x) = (\tilde{f}_1, \tilde{f}_2,\ldots, \tilde{f}_d)$ are functions such that $\tilde{g}(t, \cdot)$ is the even extension of $g(t,\cdot)$ on $\bR^d$, $\tilde{f}_{k}(t,\cdot)$  is the even extension of $f_k(t, \cdot)$ on $\bR^d$, $k =1, 2,\ldots, d-1$, and $\tilde{f}_d(t,\cdot)$ is the odd extension of $f_{d}(t, \cdot)$.
We will see that the constructed solution $\tilde{u}_k(t,\cdot)$ is even in $x_d$ for all $k =1, 2,\ldots, d-1$, and $u_d(t,\cdot)$ is odd in the $x_d$-variable. Therefore, $u (t,\cdot) = \tilde{u}(t,\cdot)|_{ \bR^d_+}$ and $p(t,\cdot) = \tilde{p}(t,\cdot)|_{ \bR^d_+}$ satisfy \eqref{u-sol.eqn-whole}.

Since $g(t,x) = f(t,x) = G(t,x) =0$ for $|x|>R$ with a sufficiently large $R > 0$, there exists $q_1 \in (1,d)$ such that
$$
g, D_lg, f_l, G_l \in L_{q_0}((0, T) \times \bR^d_{+}) \cap L_{q_1}((0, T) \times \bR^d_{+}), \quad l = 1, \ldots, d.
$$

\noindent
{\bf Step 1}: We construct $\tilde{u}$ and prove the first two estimates in \eqref{eq0715_07}.
Recall that $f = (f_1,\ldots,f_d)$ is assumed to be smooth with compact support in $(0,T) \times \bR^d_+$.
Thus,
$$
D_k\tilde{f}_l - D_l \tilde{f}_k \in L_{q_0}(\bR^d_T),
$$
and according to the results in \cite{MR2833589}, there exist $\omega_{kl} \in W_{q_0}^{1,2}(\bR^d_T)$, $k,l \in \{1,\ldots,d\}$, satisfying the non-divergence form equations
\begin{equation}
							\label{eq0704_01}
\left\{
\begin{aligned}
\partial_t \omega_{kl} - \bar{a}_{ij} D_{ij}\omega_{kl} &= D_k \tilde{f}_l - D_l \tilde{f}_k \quad \text{in} \  \bR^d_T,
\\
\omega_{kl}(0,x) &= 0 \quad \text{for} \ x \in \bR^d.
\end{aligned}
\right.
\end{equation}
Since $\tilde{f}_k(t,x)$, $k=1,\ldots,d-1$,  is the even extension of $f_k(t,x)$ and $\tilde{f}_d(t,x)$ is the odd extension of $f_d(t,x)$,
$$
D_k\tilde{f}_l(t,x) - D_l \tilde{f}_k(t,x), \quad k,l \in \{1,\ldots,d-1\},
$$ is even with respect to $x_d$, and
$$
D_d \tilde{f}_l - D_l \tilde{f}_d, \quad l \in \{1,\ldots,d-1\},
$$ is odd with respect to $x_d$.
By the evenness and oddness of the right-hand sides and coefficients of \eqref{eq0704_01}, we see that $\omega_{kl}$, $k,l \in \{1,\ldots,d-1\}$,  is even with respect to $x_d$, and $\omega_{dl}$, $l \in \{1,\ldots,d-1\}$,  is odd with respect to $x_d$.
We also see that $\omega_{kl} = - \omega_{kl}$.
Furthermore, one can check that $\omega_{kl}$, $k,l \in \{1,\ldots,d\}$, satisfies the following divergence form equation:
\begin{equation} \label{eq0625_01}
\left\{
\begin{array}{cccl}
\partial_t \omega_{kl} - \Div (\tilde{a}^{\top} \nabla \omega_{kl})  & = & D_{k} \tilde{f}_l - D_l \tilde{f}_k  & \quad \text{in} \  \bR_{T}^d,
\\
\omega_{kl}(0,x) & =& 0& \quad \text{for} \ x \in \bR^d,
\end{array} \right.
\end{equation}
where $\tilde{a}_{ij}$ (which  is different from $\bar{a}_{ij}$)  is as defined in the proof of Lemma \ref{lem1.3}.
By the $\cH^1_p$ estimates (see \cite[Theorem 2.1]{MR2764911}), we have
\begin{equation}
							\label{eq0626_03}
\norm{D{\omega}_{kl}}_{L_{q_0}(\bR^d_T)}\le N(\nu, q_0) \norm{\tilde{f}}_{L_{q_0}(\bR^d_T)},\quad  \norm{{\omega}_{kl}}_{L_{q_0}(\bR^d_T)} \leq N(\nu, T, q_0) \norm{\tilde{f}}_{L_{q_0}(\bR^d_T)}.
\end{equation}
Since $f \in L_{q_1}((0,T) \times\bR_{+}^d)^d$, we also have
\begin{equation}
							\label{eq0626_04}
\norm{D{\omega}_{kl}}_{L_{q_1}(\bR^d_T)}\le N(\nu, q_1) \norm{\tilde{f}}_{L_{q_1}(\bR^d_T)},\quad  \norm{{\omega}_{kl}}_{L_{q_1}(\bR^d_T)} \leq N(\nu, T, q_1) \norm{\tilde{f}}_{L_{q_1}(\bR^d_T)}.
\end{equation}

Now we set
\begin{equation}
							\label{eq0715_10}
\tilde{u}_l(t,x) = \int_{\bR^d}D_l \Phi(x-y) \tilde{g}(t,y) \, dy + \sum_{k=1}^d \int_{\bR^d} D_k \Phi(x-y) \omega_{kl}(t,y) \, dy
\end{equation}
in $\bR^d_T$.
By the properties of the fundamental solution $\Phi(\cdot)$, $\tilde{g}$, and $\omega_{kl}$, we see that $\tilde{u}_l(t,\cdot)$, $l=1,\ldots,d-1$,  is even in $x_d$, and $\tilde{u}_d(t,\cdot)$ is odd in $x_d$.
Note that
$$
\tilde{g}, \, D_x \tilde{g}, \, \omega_{kl}, \, D_x \omega_{kl} \in L_{q_0}(\bR^d_T) \cap L_{q_1}(\bR^d_T).
$$
Then, by Lemma \ref{lem0715_1},
\begin{equation}
                                    \label{eq10.53}
\Delta \tilde{u}_l = D_l \tilde{g} + \sum_{k=1}^d D_k \omega_{kl}\quad\text{in}\,\,\bR^d_T,
\end{equation}
\begin{equation*}
\|D\tilde{u}_l\|_{L_{q_0}(\bR^d_T)} \leq N \|\tilde{g}\|_{L_{q_0}(\bR^d_T)} + N\|\omega_{kl}\|_{L_{q_0}(\bR^d_T)},
\end{equation*}
and
\begin{equation*}
\|D^2\tilde{u}_l\|_{L_{q_0}(\bR^d_T)} \leq N \|D\tilde{g}\|_{L_{q_0}(\bR^d_T)} + N \|D\omega_{kl}\|_{L_{q_0}(\bR^d_T)},
\end{equation*}
where $N = N(d,q_0)$.
These estimates combined with \eqref{eq0626_03} prove the first two estimates in \eqref{eq0715_07}, provided that $u(t,\cdot) = \tilde{u}(t,\cdot)|_{\bR^d_+}$ satisfies \eqref{u-sol.eqn-whole}.

\vspace{1em}

\noindent
{\bf Step 2}: We prove \eqref{eq0715_09}.
Observe that, for $\varphi \in C_0^\infty(\bR^d_T)$,
\begin{align*}
\int_{\bR^d_T} \tilde{u}_l \varphi_t \, dx \, dt &= \int_{\bR^d_T} \varphi_t(t,x) \int_{\bR^d} D_l \Phi(x-y) \tilde{g}(t,y) \, dy \, dx \, dt\\
&\quad + \sum_{k=1}^d\int_{\bR^d_T} \varphi_t(t,x) \int_{\bR^d} D_k \Phi(x-y) \omega_{kl}(t,y) \, dy \, dx \, dt\\
&= \int_{\bR^d} D_l\Phi(y) \int_{\bR^d_T} \tilde{g}(t,x) \varphi_t(t,x+y) \, dx \, dt \, dy\\
&\quad + \sum_{k=1}^d \int_{\bR^d} D_k \Phi(y) \int_{\bR^d_T} \omega_{kl}(t,x) \varphi_t(t,x+y) \, dx \, dt \, dy
=: J_1 + J_2.
\end{align*}
For $k=1,2,\ldots,d-1$, we set $\tilde G_k(t,\cdot)$ to be the even extension of $G_k(t,\cdot)$ with respect to $x_d$ and $\tilde G_d(t,\cdot)$ to be the odd extension of $G_d(t,\cdot)$ with respect to $x_d$.
By \eqref{eq0702_01} we have
\begin{equation*}
\partial_t \tilde{g} = \Div \tilde{G}
\end{equation*}
in the sense of distribution.
Then
\begin{align*}
J_1 &= \int_{\bR^d} D_l \Phi(y) \int_{\bR^d_T} \tilde{G}(t,x) \cdot \nabla  \varphi(t,x+y) \, dx \, dt \, dy\\
&= \sum_{k=1}^d \int_{\bR^d_T} D_k \varphi(t,x) \int_{\bR^d} D_l \Phi(x-y) \tilde{G}_k(t,y) \, dy \, dx \, dt.
\end{align*}
Since $\tilde{G}_k \in L_{q_0}(\bR^d_T) \cap L_{q_1}(\bR^d_T)$, by Lemma \ref{lem0715_1},
$$
V_{lk}(t,x) := \int_{\bR^d} D_l \Phi(x-y) \tilde{G}_k(t,y) \, dy
$$
satisfies $D_x V_{lk} \in L_{q_0}(\bR^d_T)$ with the estimate as in \eqref{eq0715_08}.
Thus, if we set $V_1(t,x) = \sum_{k=1}^d D_k V_{kl}(t,x)$, then
\begin{equation*}
J_1 = - \int_{\bR^d_T} V_1(t,x) \varphi(t,x) \, dx \, dt
\end{equation*}
and
$$
\|V_1\|_{L_{q_0}(\bR^d_T)} \leq N(d,q_0)\|\tilde{G}\|_{L_{q_0}(\bR^d_T)}.
$$
For $J_2$, we observe that from \eqref{eq0625_01},
\begin{align*}
&\int_{\bR^d_T} \omega_{kl}(t,x) \varphi_t(t,x+y) \, dx \, dt = \int_{\bR^d_T} \tilde{a}_{ji} D_j \omega_{kl}(t,x) D_i \varphi(t,x+y) \, dx \, dt\\
&\quad + \int_{\bR^d_T} \tilde{f}_l (t,x) D_k \varphi(t,x+y) \, dx \, dt - \int_{\bR^d_T} \tilde{f}_k(t,x) D_l\varphi(t,x+y) \, dx \, dt.
\end{align*}
From \eqref{eq0626_03} and \eqref{eq0626_04}, we see that $\tilde{a}_{ji} D_j \omega_{kl} \in L_{q_0}(\bR^d_T) \cap L_{q_1}(\bR^d_T)$ and $\tilde{f} \in  L_{q_0}(\bR^d_T) \cap L_{q_1}(\bR^d_T)$; thus,
by proceeding as above, we find that there exists $V_2 \in L_{q_0}(\bR^d)$ such that
$$
J_2 = - \int_{\bR^d_T} V_2(t,x) \varphi(t,x) \, dx \, dt
$$
and
$$
\|V_2\|_{L_{q_0}(\bR^d_T)} \leq N \|D\omega_{kl}\|_{L_{q_0}(\bR^d_T)} + N \|\tilde{f}\|_{L_{q_0}(\bR^d_T)} \leq N \|\tilde{f}\|_{L_{q_0}(\bR^d_T)},
$$
where $N = N(d, \nu, q_0)$, and the last inequality is due to  the first estimate in \eqref{eq0626_03}.
From the above observations on $J_1$ and $J_2$, we see that
$$
\partial_t\tilde{u}_l = V_1 + V_2,
$$
and
$$
\|\partial_t\tilde{u}_l\|_{L_{q_0}(\bR^d_T)} \leq N \|\tilde{G}\|_{L_{q_0}(\bR^d_T)} + N \|\tilde{f}\|_{L_{q_0}(\bR^d_T)},
$$
where $N = N(d,q_0,\nu)$.
This proves \eqref{eq0715_09}.

\vspace{1em}

\noindent
{\bf Step 3}: We prove that in $\bR^d_T$,
\begin{equation}
							\label{eq0715_12}
\Div \tilde{u}(t,x) = \tilde{g}(t,x)
\end{equation}
and
\begin{equation}
							\label{eq0716_02}
\partial_k \tilde{u}_l - \partial_l \tilde{u}_k = \omega_{kl}.
\end{equation}
By \eqref{eq0715_10} and Lemma \ref{lem0715_1}, one can write
\begin{equation}
							\label{eq0715_11}
\sum_{l=1}^d D_l \tilde{u}_l = \sum_{l=1}^d D_l \int_{\bR^d} D_l \Phi(x-y) \tilde{g}(t,y) \, dy + \sum_{l,k=1}^d D_l \int_{\bR^d} D_k \Phi(x-y) \omega_{kl}(t,y) \, dy,
\end{equation}
where the second term is zero because $\omega_{kl} = - \omega_{lk}$.
Regarding the first term in \eqref{eq0715_11}, we observe that
\begin{align*}
&\sum_{l=1}^d\int_{\bR^d_T} \tilde{u}_l D_l \varphi \, dx \, dt = \sum_{l=1}^d \int_{\bR^d_T} D_l \varphi(t,x) \int_{\bR^d} D_l \Phi(x-y) \tilde{g}(t,y) \, dy \, dx \, dt\\
&= \sum_{l=1}^d \int_{\bR^d_T} \tilde{g}(t,x) \int_{\bR^d} (D_l \Phi)(y-x) D_l \varphi(t,y) \, dy \, dx \, dt\\
&= - \int_{\bR^d_T} \tilde{g}(t,x) \Delta \int_{\bR^d} \Phi(x-y) \varphi(t,y) \, dy \, dx \, dt = -  \int_{\bR^d_T} \tilde{g}(t,x) \varphi(t,x) \, dx \, dt
\end{align*}
for any $\varphi \in C_0^\infty(\bR^d_T)$.
Hence, \eqref{eq0715_12} is proved.

To prove \eqref{eq0716_02}, we first show that
\begin{equation}
							\label{eq0715_20}
\partial_k \omega_{jl} - \partial_l \omega_{jk} = \partial_j \omega_{kl}
\end{equation}
in $\bR^d_T$ for all $k, j, l \in \{1,\ldots,d\}$.
By the properties of $\omega_{kl}$, this is equivalent to showing that
\begin{equation}
							\label{eq0704_02}
\partial_k \omega_{jl} + \partial_j\omega_{lk} + \partial_l \omega_{kj} = 0
\end{equation}
in $\bR^d_T$.
It is sufficient to check \eqref{eq0704_02}
for three cases:
$k,j,l \in \{1,\ldots,d-1\}$, $k = d$, $j,l \in \{1,\ldots,d-1\}$, and
 $k=j=d$ and $l\in \{1,\ldots,d-1\}$.
In the last case, \eqref{eq0704_02} becomes
$$
\partial_d \omega_{dl} + \partial_d \omega_{ld} = 0,
$$
which is guaranteed by the property of $\omega_{kl}$.
For the first and second cases, by differentiating the equations \eqref{eq0704_01} in $x_r$, we write them as
\begin{equation}
							\label{eq0715_21}
\left\{
\begin{aligned}
\partial_t D_r\omega_{kl} - \bar{a}_{ij} D_{ij}D_r \omega_{kl} &= D_r (D_k \tilde{f}_l - D_l \tilde{f}_k) \quad \text{in} \ \bR^d_T,
\\
D_r\omega_{kl}(0,x) &= 0 \quad \text{for} \ x \in \bR^d,
\end{aligned}
\right.
\end{equation}
where
$$
\left\{
\begin{aligned}
r&=1,\ldots,d \quad \text{if} \quad 	k,l\in \{1,\ldots,d-1\},
\\
r&=1,\ldots,d-1 \quad \text{if} \quad k = d \,\, \text{or} \,\, l = d.
\end{aligned}
\right.
$$
In particular, note that when $k,l \in \{1,\ldots,d-1\}$,
$$
D_d \bar{a}_{ij} D_{ij} \omega_{kl} = \bar{a}_{ij} D_{ij}D_d \omega_{kl},
$$
which follows from the evenness of $\omega_{kl}$ with respect to $x_d$.
From \eqref{eq0715_21}, one can see that $\partial_k \omega_{jl} + \partial_j\omega_{lk} + \partial_l\omega_{kj} \in W_{q_0}^{1,2}(\bR^d_T)$ satisfies \eqref{eq0715_21} with the right-hand side being zero.
Then, by uniqueness, \eqref{eq0704_02} follows.

Now we prove \eqref{eq0716_02}.
From \eqref{eq0715_10} we have
\begin{align*}
\partial_k \tilde{u}_l - \partial_l \tilde{u}_k &= D_k \int_{\bR^d} D_l \Phi(x-y) \tilde{g}(t,y) \, dy - D_l \int_{\bR^d} D_k \Phi(x-y) \tilde{g}(t,y) \, dy\\
&\quad + \sum_{r=1}^d \left[D_k \int_{\bR^d} D_r \Phi(x-y) \omega_{rl}(t,y) \, dy - D_l \int_{\bR^d} D_r \Phi(x-y) \omega_{rk}(t,y) \, dy \right],
\end{align*}
where the first two terms on the right-hand side cancel each other.
Then, since $D\omega_{rl} \in L_{q_0}(\bR^d_T) \cap L_{q_1}(\bR^d_T)$, by \eqref{eq0715_17}
\begin{align*}
\partial_k \tilde{u}_k - \partial_k \tilde{u}_k &= \sum_{r=1}^d \int_{\bR^d} D_r \Phi(x-y) \left( D_k \omega_{rl}(t,y) - D_l \omega_{rk}(t,y) \right) \, dy\\
&= \sum_{r=1}^d \int_{\bR^d} D_r \Phi(x-y) D_r \omega_{kl}(t,y) \, dy = \omega_{kl},
\end{align*}
where we used \eqref{eq0715_20} in the second equality and \eqref{eq0715_17} as well as \eqref{eq0715_14} in the last equality.

\vspace{1em}

\noindent
{\bf Step 4}:
We prove that there exists $\tilde{p}: \bR^d_T \to \bR$ such that $(\tilde{u}, \tilde{p})$ satisfies \eqref{extended-solution-St}.
In fact, the second relation in \eqref{extended-solution-St} is shown in \eqref{eq0715_12}, and the third one follows from the definition of $\tilde{u}$ in \eqref{eq0715_10} and the initial conditions on $g$ and $\omega_{kl}$.
Thus, we prove here
\begin{equation}
							\label{eq0716_04}
\tilde{u}_t -\bar{a}_{ij} D_{ij} \tilde{u} + \nabla \tilde{p} = \tilde{f}
\end{equation}
in $\bR_{T}^d$.
Once this is proved, the last estimate in \eqref{eq0715_07} follows from \eqref{eq0716_04}, the second estimate in \eqref{eq0715_07}, and \eqref{eq0715_09}, as well as the evenness and oddness of the involved functions.
We set $h = (h_1,\ldots, h_d)$, where
$$
h_l(t,x) = \tilde{f}_l(t,x) - \partial_t \tilde{u}_l(t,x) + \bar{a}_{ij}(t,x_d) D_{ij}\tilde{u}_l(t,x)
$$
in $\bR^d_T$.
Then, using \eqref{eq0716_02} and \eqref{eq0625_01}, we see that
$$
D_k h_l - D_l h_k = 0
$$
in $\bR^d_T$ in the distribution sense.
In particular, if $k=d$ and $l \in \{1,\ldots,d-1\}$, then it follows that
\begin{align*}
D_d h_l - D_l h_d &= D_d \tilde{f}_l - D_l \tilde{f}_d - \partial_t (D_d \tilde{u}_l - D_l \tilde{u}_d) + D_d \left( \bar{a}_{ij} D_{ij} \tilde{u}_l \right) - D_l \left( \bar{a}_{ij} D_{ij} \tilde{u}_d \right)\\
&= D_d \tilde{f}_l - D_l \tilde{f}_d - \partial_t \omega_{dl} + D_i \left( \tilde{a}_{ji} D_j \omega_{dl} \right) = 0,
\end{align*}
where we used \eqref{eq0717_01}, which was deduced from the evenness of $\tilde{u}_l$ in $x_d$.

We extend $h_l$ to be zero for $t < 0$ and take infinitely differentiable functions $\eta(t) \in C_0^\infty(\bR)$ and $\zeta(x) \in C_0^\infty(\bR^d)$ with unit integrals  such that
$\eta(t) = 0$ for $t \geq 0$.
We set
$$
h_l^{(\varepsilon)}(t,x) = \int_{-\infty}^T\int_{\bR^d} h_l(s,y) \phi_\varepsilon(t-s,x-y) \, dy \, ds,
$$
where
$$
\phi_\varepsilon(t,x) = \varepsilon^{-d-2}\eta(t/\varepsilon^2)\zeta(x/\varepsilon).
$$
Then $h_l^{(\varepsilon)} \in C^\infty\left([0,T] \times \bR^d\right)$ and
\begin{equation}
							\label{eq0716_03}
D_k h_l^{(\varepsilon)} - D_l h_k^{(\varepsilon)} = 0
\end{equation}
in $\bR^d_T$.
Let
\begin{align*}
p^\varepsilon(t,x) &= \int_0^{x_1} h_1^{(\varepsilon)}
(t,r,0,\ldots,0) \, dr + \int_0^{x_2} h_2^{(\varepsilon)}(t,x_1,r,0,\ldots,0) \, dr+ \ldots\\
&\quad + \int_0^{x_d} h_d^{(\varepsilon)}(t,x_1,x_2,\ldots, x_{d-1},r) \, dr.
\end{align*}
We define
$$
\tilde{p}^\varepsilon(t,x) = p^\varepsilon(t,x) - \frac{1}{B_1} \int_{B_1} p^\varepsilon(t,y) \, dy.
$$
Using \eqref{eq0716_03}, we see that
$$
\nabla \tilde{p}^\varepsilon = \left(h_1^{(\varepsilon)}, \ldots, h_d^{(\varepsilon)}\right)
$$
in $\bR^d_T$, and
$$
\|\nabla \tilde{p}^\varepsilon\|_{L_{q_0}(\bR^d_T)} =
\|h^{(\varepsilon)}\|_{L_{q_0}(\bR^d_T)} \leq \|h\|_{L_{q_0}(\bR^d_T)}
$$
is bounded uniformly in $\varepsilon > 0$.

On the other hand, for each $R > 1$, by the Poincar\'{e} inequality,
$$
\|\tilde{p}^\varepsilon(t,\cdot)\|_{L_{q_0}(B_R)} \leq N(d,q_0,R) \|\nabla \tilde{p}^\varepsilon(t,\cdot)\|_{L_{q_0}(B_R)}
$$
for each $t \in [0,T]$.
By integrating both sides of the above inequality in $t$, we obtain
$$
\|\tilde{p}^\varepsilon\|_{L_{q_0}((0,T) \times B_R)} \leq N \|\nabla \tilde{p}^\varepsilon\|_{L_{q_0}((0,T) \times B_R)},
$$
which is bounded uniformly in $\varepsilon > 0$.
Hence, there exists $\tilde{p}(t,x)$ defined in $\bR^d_T$, which is spatially locally in $L_{q_0}(\bR^d_T)$, such that $\nabla\tilde{p} \in L_{q_0}(\bR^d_T)$ and a subsequence of $\{\nabla \tilde{p}^\varepsilon\}$ converges weakly to $\nabla\tilde{p}$ in $L_{q_0}(\bR^d_T)$.
Since
$$
D_l \tilde{p}^\varepsilon = h_l^{(\varepsilon)}, \quad l = 1,\ldots,d,
$$
in $\bR^d_T$ and $h_l^{(\varepsilon)} \to h_l$ in $L_{q_0}(\bR^d_T)$, we conclude that $(\tilde{u}, \tilde{p})$ satisfies \eqref{eq0716_04}.

Finally, we note from \eqref{eq0716_04} that $\tilde{u} \in L_\infty\left((0,T), L_{q_0}(\bR^d)\right)$.
\end{proof}

\begin{proof}[Proof of Theorem \ref{existence-lemma}]
We denote  $\bR^d_{T, +} = (0,T) \times \bR^d_+$.
Thanks to Proposition \ref{prop0718_1} and the fact that $C_0^\infty(\bR^d_{T,+})$ is dense in $L_{q_0}(\bR^d_{T,+})$, we need to show only that there exist functions $g^m$ and $G^m = (G_1^m,\ldots,G_d^m)$ defined on $\bR^d_{T,+}$ such that $g^m(t,x)$ and $G^m(t,x)$ vanish for large $|x|$ uniformly in $t \in [0,T]$,
$$
g^m, \, |D g^m| \in L_{q_0}(\bR^d_{T,+}), \quad g^m(0,\cdot) = 0, \quad G^m \in L_{q_0}(\bR^d_{T,+})^d,
$$
$$
\partial_t g^m = \Div G^m
$$
in $\bR^d_{T,+}$, and
$$
\|g - g^m\|_{L_{q_0}(\bR^d_{T,+})} + \|D g - Dg^m\|_{L_{q_0}(\bR^d_{T,+})} + \|G - G^m\|_{L_{q_0}(\bR^d_{T,+})} \to 0\quad\text{as}\ m \to \infty.
$$

We take infinitely differentiable functions
$\chi_m(x)$ defined on $\bR^d$ such that
$\chi_m(x) = 1$ on $B_{m/2}$ and $\chi_m(x) = 0$ for $x \in \bR^d \setminus B_m$, $m = 1, 2,\ldots$, where
$$
B_m := \{|x| < m: x \in \bR^d\}.
$$
We set
$$
B_m^+ = B_m \cap \{x_d > 0\}, \quad
c_m(t) = \frac{\int_{B_m^+} \nabla \chi_m(y) \cdot
G(t,y) \, dy}{\int_{B_m^+} \chi_m(y) \, dy},
$$
and find $H^m$ in $(0,T) \times B_m^+$ such that
$$
\left\{
\begin{aligned}
\Div H^m &= - \nabla \chi_m \cdot G + c_m(t) \chi_m(x) \quad \text{in} \  (0,T) \times B_m^+,
\\
H^m(t,x) &= 0 \quad \text{on} \ (t,x) \in (0,T) \times \partial B_m^+,
\end{aligned}
\right.
$$
with the estimate
$$
\|D_x H^m\|_{L_{q_0}\left((0,T) \times B_m^+\right)} \leq N(d,q_0) \left(\|\nabla \chi_m \cdot G\|_{L_{q_0}\left((0,T) \times B_m^+ \right)} + \|c_m(t)\chi_m(x)\|_{L_{q_0}\left((0,T) \times B_m^+ \right)} \right).
$$
This is indeed possible by using an integral representation of the solutions to the divergence equations on star-shaped domains, as shown in, for instance, \cite{MR1880723}.
We note that
$$
\|\nabla \chi_m \cdot G\|_{L_{q_0}\left((0,T) \times B_m^+ \right)} + \|c_m(t)\chi_m(x)\|_{L_{q_0}\left((0,T) \times B_m^+ \right)} \leq N(d) m^{-1} \| 1_{B_m \setminus B_{m/2}} G\|_{L_{q_0}(\bR^d_{T,+})}.
$$
From the above two inequalities, the Poincar\'{e} inequality on $B_m^+$, and the fact that
$$
\| 1_{B_m \setminus B_{m/2}} G\|_{L_{q_0}(\bR^d_{T,+})} \to 0\quad\text{as}\ m \to \infty,
$$
we have
$$
\|H^m\|_{L_{q_0}\left((0,T) \times B_m^+\right)} \leq m \|D_x H^m\|_{L_{q_0}\left((0,T) \times B_m^+\right)} \to 0\quad\text{as}\ m \to \infty.
$$

We set
$$
g^m(t,x) := \chi_m(x) g(t,x) + \chi_m(x) \int_0^t c_m(s) \, ds
$$
and
$$
G^m(t,x) :=
\left\{
\begin{aligned}
\chi_m(x) G(t,x) + H^m(t,x) \quad &\text{in} \  (t,x) \in (0,T) \times B_m^+,
\\
0 \quad &\text{in} \ (t,x) \in (0,T) \times (\bR^d_+ \setminus B_m^+).
\end{aligned}
\right.
$$
We then see that $g^m$ and $G^m$ satisfy the required properties.
In particular, for $\varphi \in C_0^\infty (\bR^d_T)$,
\begin{align*}
\int_{\bR^d_{T,+}} g^m \varphi_t \, dx \, dt &= \int_{\bR^d_{T,+}} \left[ \chi_m G \cdot \nabla \varphi +  \varphi \nabla \chi_m \cdot G + \chi_m \left(\int_0^t c_m(s) \, ds \right) \varphi_t  \right] \, dx \, dt\\
&= \int_{\bR^d_{T,+}} \left( \chi_m G \cdot \nabla \varphi +  \varphi \nabla \chi_m \cdot G - \chi_m(x) c_m(t) \varphi \right) \, dx \, dt\\
&= \int_{\bR^d_{T,+}} \chi_m G \cdot \nabla \varphi \, dx \, dt - \int_0^T \int_{B_m^+}  \varphi \Div H^m\, dx \, dt\\
&= \int_{\bR^d_{T,+}} G^m \cdot \nabla \varphi \, dx \, dt,
\end{align*}
where we used the fact that $H^m = 0$ on $(0,T) \times \partial B_m^+$.
The theorem is proved.
\end{proof}

\section{Stokes system with measurable coefficients and proof of Theorem \ref{thm2.3b}.} \label{non-div-se}

In this section, we consider the non-divergence form Stokes system with measurable coefficients and the Lions boundary conditions.
We give the proof of the main result of the paper, Theorem \ref{thm2.3b}.
Throughout this section, for any locally integrable function $f$ defined in a neighborhood of the parabolic cylinder $Q = \Gamma \times \Omega \subset \bR \times \bR^{d}$, we denote
\[
(f)_Q =\fint_{Q} f(t,x)\ dx \ dt, \quad \text{and} \quad [f]_{\Omega}(t) =\fint_{\Omega} f(t,x)\ dx, \quad t \in \Gamma.
\]
For a domain $\Omega\subset \bR^d_+$ and $\rho>0$, we denote
$$
\Omega^\rho=  {\textstyle \bigcup_{y \in \Omega}} B_{\rho}^+(y).
$$
We say that $\Omega$ satisfies the interior measure condition if there exists $\gamma\in (0,1)$ such that for any $x_0\in \overline{\Omega}$ and $r\in (0,\text{diam}\,\Omega)$,
\begin{equation}
                            \label{eq10.19}
\frac {|B_r(x_0)\cap \Omega|} {|B_r(x_0)|}\ge \gamma.
\end{equation}
We begin with the following lemma estimating the second derivatives of solutions.

\begin{lemma}   \label{lem2.4c}
Let $q_0\in (1,\infty)$, $q\in (q_0,\infty)$, $r\in (0,R_0)$, and $\delta\in (0,1)$.
Further, let $u\in W^{1,2}_q(Q_{r}^+)^d$ be a strong solution to \eqref{eq7.41c} in $Q_r^+$ with the boundary conditions \eqref{bdr-cond}, where $p \in W_1^{0,1}(Q_r^+)$, $f \in L_{q_0}(Q_r^+)^d$, and $Dg \in L_{q_0}(Q_{r}^+(z_0))^d$.
Suppose that Assumption \ref{assump1} ($\delta$) holds.
Then we have
\begin{align} \label{eq9.51}
(|D^2u|^{q_0})_{Q_{r/2}^+}^{\frac 1 {q_0}}&\le N(d, \nu, q_0) \Big[ (|Dg|^{q_0})_{Q_{r}^+}^{\frac 1 {q_0}}  +  (|f|^{q_0})_{Q_{r}^+}^{\frac 1 {q_0}} + r^{-1}(|Du-[Du]_{B_r^+}(t)|^{q_0})_{Q_{r}^+}^{\frac 1 {q_0}} \Big] \nonumber \\
&\qquad +N(d,\nu,q_0) r^{-1} \Big[  \sum_{i=1}^{d-1} (|D_d u_i|^{q_0})_{Q_{r}^+}^{\frac 1 {q_0}} +   (|D_{x'} u_d|^{q_0})_{Q_{r}^+}^{\frac 1 {q_0}}  \Big] \nonumber\\
& \qquad +   N(d,\nu,q_0,q) \delta^{\frac 1 {q_0}-\frac 1 {q}}(|D^2u|^q)_{Q_{r}^+}^{\frac 1 {q}},
\end{align}
and
\begin{align} \label{eq9.52}
(|D^2u|^{q_0})_{Q_{r/2}^+}^{\frac 1 {q_0}}&\le N(d, \nu, q_0) \Big[ (|Dg|^{q_0})_{Q_{r}^+}^{\frac 1 {q_0}}  +  (|f|^{q_0})_{Q_{r}^+}^{\frac 1 {q_0}} + r^{-1}(|Du|^{q_0})_{Q_{r}^+}^{\frac 1 {q_0}} \Big]\nonumber\\
& \qquad +   N(d,\nu,q_0,q) \delta^{\frac 1 {q_0}-\frac 1 {q}}(|D^2u|^q)_{Q_{r}^+}^{\frac 1 {q}}.
\end{align}
\end{lemma}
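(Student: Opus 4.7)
The plan is a coefficient-freezing perturbation argument in the spirit of Krylov and Dong--Kim. By Assumption~\ref{assump1}$(\delta)$ applied at the base point $z_0=(t_0,x_0)$ of $Q_r^+$, pick $\hat{a}_{ij}(t)$ (independent of $x$) with $\fint_{Q_r^+}|a_{ij}-\hat{a}_{ij}|\,dx\,dt\le\delta$, so that in $Q_r^+$
\[
u_t-\hat{a}_{ij}(t)D_{ij}u+\nabla p=f+(a_{ij}-\hat{a}_{ij})D_{ij}u,\qquad\Div u=g.
\]
Using Theorem~\ref{existence-lemma}, I would construct $(w,p_w)$ in the half-space $(t_0-r^2,t_0]\times\bR^d_+$ with frozen-in-$x$ coefficients $\hat{a}_{ij}(t)$, zero initial data, Lions boundary, right-hand side $F=\chi_{Q_r^+(z_0)}[f+(a-\hat{a})D^2u]$, and divergence $\tilde g=\chi_{Q_r^+(z_0)}(g-[g]_{B_r^+(x_0)}(t))$. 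The point of subtracting the spatial mean $[g]_{B_r^+}(t)$ is that the difference $v=u-w$ then satisfies, in $Q_r^+$, the homogeneous system
\[
v_t-\hat{a}_{ij}(t)D_{ij}v+\nabla p_v=0,\qquad\Div v=[g]_{B_r^+(x_0)}(t),
\]
with Lions conditions on the flat part of the boundary, i.e.\ $v$ falls within the hypotheses of Lemma~\ref{lem1.2}.

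For $w$, Theorem~\ref{existence-lemma} supplies
\[
\|D^2w\|_{L_{q_0}(Q_r^+)}\le N\bigl(\|f\|_{L_{q_0}(Q_r^+)}+\|Dg\|_{L_{q_0}(Q_r^+)}+\|(a-\hat{a})D^2u\|_{L_{q_0}(Q_r^+)}\bigr),
\]
and an analogous bound for $\|Dw\|_{L_{q_0}(Q_r^+)}$ carrying an extra factor of $r$. To absorb the perturbation I would apply H\"older's inequality with exponents $q/q_0$ and $q/(q-q_0)$: combining $|a-\hat{a}|\le 2\nu^{-1}$ with $\fint_{Q_r^+}|a-\hat{a}|\le\delta$ yields $\fint_{Q_r^+}|a-\hat{a}|^{q_0q/(q-q_0)}\le N(\nu,q_0,q)\,\delta$, so that
\[
\bigl(|(a-\hat{a})D^2u|^{q_0}\bigr)_{Q_r^+}^{1/q_0}\le N(\nu,q_0,q)\,\delta^{1/q_0-1/q}\,\bigl(|D^2u|^q\bigr)_{Q_r^+}^{1/q}.
\]
This is precisely the mechanism generating the $\delta^{1/q_0-1/q}$ term in both \eqref{eq9.51} and \eqref{eq9.52}.

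To conclude I would apply Lemma~\ref{lem1.2} to $v$ on $Q_r^+$ in its $r$-scaled form. The scaled \eqref{eq7.46b} bounds $(|D^2v|^{q_0})_{Q_{r/2}^+}^{1/q_0}$ by $r^{-1}$ times averages over $Q_r^+$ of $D_{x'}v-[D_{x'}v]_{B_r^+}(t)$, of $D_dv_i$ for $i<d$, and of $D_{x'}v_d$; substituting $v=u-w$ and using the $Dw$ bound from the previous paragraph reproduces the right-hand side of \eqref{eq9.51}. Similarly, the scaled \eqref{eq7.46} applied to $v$ and a Poincar\'e-type rearrangement yield \eqref{eq9.52}. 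Writing $D^2u=D^2v+D^2w$, combining the two pieces, and moving the $\delta^{1/q_0-1/q}(|D^2u|^q)^{1/q}$ term to the right finishes the proof.

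The main obstacle is the construction of $(w,p_w)$. Theorem~\ref{existence-lemma} requires $\tilde g_t=\Div\tilde G$ with $\tilde G\in L_{q_0}$ and $\tilde g$ vanishing at the initial time. The initial-time condition is automatic from the time cutoff, but the compatibility condition $\tilde g_t=\Div\tilde G$ is delicate and is obtained by a Bogovskii-type construction on star-shaped domains, analogous to the one used at the end of Section~\ref{sec4}: the subtraction of $[g]_{B_r^+(x_0)}(t)$ is exactly what forces $\tilde g$ to have zero spatial mean, so that a suitable $\tilde G$ exists with $\|\tilde G\|_{L_{q_0}}$ controlled by the $L_{q_0}$ norms of $g$ and $Dg$. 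Once this construction is in place, the rest of the argument is bookkeeping with H\"older, parabolic scaling, Lemma~\ref{lem1.2}, and Theorem~\ref{existence-lemma}.
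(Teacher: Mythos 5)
Your overall strategy is exactly that of the paper: freeze coefficients via Assumption~\ref{assump1}, split $u=v+w$ with $w$ constructed from Theorem~\ref{existence-lemma} and $v$ falling under Lemma~\ref{lem1.2}, and absorb the perturbation term $(a-\hat a)D^2u$ by H\"older to generate $\delta^{1/q_0-1/q}(|D^2u|^q)^{1/q}$. That part is right. But the construction of $w$ has a genuine gap that your sketch does not resolve. Your forcing divergence is $\tilde g=\chi_{Q_r^+(z_0)}(g-[g]_{B_r^+}(t))$, a hard indicator cutoff. This fails two hypotheses of Theorem~\ref{existence-lemma}: (i) across the lateral boundary $\partial B_r^+\cap\{x_d>0\}$ the function $g-[g](t)$ does not vanish, so $D\tilde g$ has a singular part supported on that hypersurface and is not in $L_{q_0}$; and (ii) the limit of $\tilde g(t,\cdot)$ as $t\downarrow t_0-r^2$ is $\chi_{B_r^+}(g(t_0-r^2,\cdot)-[g](t_0-r^2))$, which is not zero in general, so the initial-time condition is not, as you assert, automatic. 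You have also not addressed the regularity needed for the compatibility condition: $\tilde g_t$ must be an $L_{q_0}$ function for the Bogovskii-type argument to produce $G\in L_{q_0}$, but a priori $u_t$ is only $L_1$, so $\partial_t g=\partial_t\Div u$ need not be in $L_{q_0}$.

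The paper fixes all three points simultaneously with one move you have missed. First, mollify the equation in $t$ (paper's first step) so that $\partial_t g\in L_{q_0}$. Second, replace the indicator by a smooth cutoff $\phi_r(t,x)=\psi_r(t)\zeta_r(x)$, which makes $D\tilde g\in L_{q_0}$ and forces $\tilde g$ to vanish at the initial time. Third---and this is the subtle part---replacing the plain average $[g]_{B_r^+}(t)$ by the $\zeta_r$-weighted average $[g]_{\zeta_r,B_r^+}(t)=\bigl(\int_{B_r^+}g\zeta_r\bigr)/\bigl(\int_{B_r^+}\zeta_r\bigr)$. Notice that with a smooth cutoff and the \emph{unweighted} average you lose the zero-mean property $\int_{B_r^+}\tilde g\,dx=0$ (which your hard cutoff did give you), and without that you cannot run the Bogovskii construction; the weighted average restores it. Your $v$ then satisfies $\Div v=[g]_{\zeta_r,B_r^+}(t)$ on $Q_{2r/3}^+$ rather than on all of $Q_r^+$, but since $2r/3>r/2$ this does not affect the conclusion. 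Once $w$ is correctly built, your subsequent use of the $Dw$ estimate (equivalently, the Poincar\'e inequality applied to the $w$-terms, which vanish on $\{x_d=0\}$, as the paper does) and of the rescaled Lemma~\ref{lem1.2} for $v$ goes through and yields both \eqref{eq9.51} and \eqref{eq9.52}.
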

\begin{proof} If $h$ is an integrable function defined on $Q_r^+$, we take the following mollification of $h(t,x)$ for $t \in (-r^2+\varepsilon^2, 0)$:
$$
h^{(\varepsilon)}(t,x) =  \int_{-r^2}^0 h(t+s,x) \eta_\varepsilon(s) \, ds,
$$
where $\eta(t) \in C_0^\infty(\bR)$ with $\eta(t) = 0$ for $t \ge 0$ and $\eta_\varepsilon(t) = \varepsilon^{-2}\eta(t/\varepsilon^2)$.
Note that $h^{(\varepsilon)}(t,x)$ is infinitely differentiable in $t$, and $\partial_t^k h^{(\varepsilon)}(t,x) \in L_{q_0}(Q_{r'}^+)$ for any $k = 1,2,\ldots$ if $h \in L_{q_0}(Q_r^+)$, $r' \in (0,r)$, and $\varepsilon$ is sufficiently small. By mollifying \eqref{eq7.41c} as above with respect to $t$, we have
$$
\partial_t u^{(\varepsilon)} - a_{ij} D_{ij} u^{(\varepsilon)} + \nabla p^{(\varepsilon)} = f^{(\varepsilon)} + \left( a_{ij} D_{ij} u \right)^{(\varepsilon)} - a_{ij} D_{ij} u^{(\varepsilon)}
$$
in $Q_{r'}^+$ for $r' \in (0,r)$.
We see that if we prove the estimate in the lemma for $u^{(\varepsilon)}$, by letting $\varepsilon \to 0$, we obtain the desired estimate for $u$.
Thus, henceforth we assume that $u(t,x)$ is infinitely differentiable in $t$ and $\partial_t^k u, \partial_t^k D_x u, \partial_t^k D_x^2 u \in L_{q_0}(Q_r^+)$ for any $k=1,2,\ldots$.

Let $\zeta_r(x)$ and $\psi_r(t)$ be infinitely differentiable functions defined on $\bR^d$ and $\bR$, respectively, such that
$$
\zeta_r(x) = 1 \quad \text{on} \  B_{2r/3}, \quad \zeta_r(x) = 0 \quad \text{on} \  \bR^d \setminus B_r,
$$
$$
\psi_r(t) = 1 \quad \text{on} \  t \in (-4r^2/9, 4r^2/9), \quad \psi_r(t) = 0 \quad \text{on} \  t \in \bR \setminus (-r^2,r^2).
$$
We set $\phi_r(t,x) = \psi_r(t) \zeta_r(x)$.
Then $\phi_r = 1$ on $Q_{2r/3}$ and $|D\phi_r|\le 4/r$.

For the given $r \in (0, R_0)$, let $\hat{a}_{ij}(t)$ be the matrix defined in Assumption \ref{assump1} ($\delta$) such that
\[
\fint_{Q_r^+} |a_{ij}(t,x)-\hat{a}_{ij}(t)|\,dx\,dt\le \delta, \quad \forall \ i, j =1,2,\ldots, d.
\]
We first consider the following equation:
$$
\left\{
\begin{array}{cccl}
w_t-\hat{a}_{ij}(t)D_{ij} w+\nabla p_1 & = & I_{Q_r^+}(f+(a_{ij}-\hat{a}_{ij})D_{ij} u) & \quad \text{in} \  (-r^2,0)\times \bR^d_+ \\
 \Div  w &= &  (g -  [g(t,\cdot)]_{\zeta_r, B_r^+}) \phi_r &  \quad \text{in} \   (-r^2,0)\times \bR^d_+\\
 w (-r^2, \cdot) & = & 0 & \quad \text{in} \  \bR^d_+
\end{array} \right.
$$
with the Lions boundary conditions
\[
D_d w_k = w_d =0 \quad \text{on} \  \{x_d =0\} \quad \text{for} \  k = 1,2,\ldots, d-1,
\]
 where
$$
[g(t,\cdot)]_{\zeta_r, B_r^+} = \frac{\int_{B_r^+} g(t,y) \zeta_r(y) \, dy}{\int_{B_r^+} \zeta_r(y) \, dy}.
$$
To find a strong solution $(w,p_1)$ to the above equation using Theorem \ref{existence-lemma}, we need to check that
$$
 \tilde{g}(t,x) := \left(g - [g(t,\cdot)]_{\zeta_r, B_r^+}\right) \phi_r \in L_{q_0}\left((-r^2,0)\times \bR^d_+\right), \quad D\tilde{g} \in L_{q_0}\left((-r^2,0)\times \bR^d_+\right),
$$
$\tilde{g}(-r^2,\cdot) = 0$, and that there exists $G=(G_1,\ldots,G_d) \in L_{q_0}(\bR^d_{T,+})$ such that
$$
\partial_t \tilde{g} = \Div G
$$
in $(-r^2,0) \times \bR^d_+$ in the sense as in \eqref{eq0702_01}.
The first three conditions are easy to check, so we check only the last one.
Since $u(t,x)$ is infinitely differentiable in $t$ and $\partial_t \Div u \in L_{q_0}(Q_r^+)$, we have
$$
\partial_t \tilde{g} = \left( \partial_t g - [ \partial_t g(t,\cdot)]_{\zeta_r,B_r^+} \right) \zeta_r(x) \psi_r(t) + \left(g - [g(t,\cdot)]_{\zeta_r, B_r^+}\right) \zeta_r(x) \psi_r'(t),
$$
which belongs to $L_{q_0}\left((-r^2,0) \times \bR^d_+\right)$.
From this it follows that
$$
\int_{B_r^+} \partial_t \tilde{g}(t,x) \, dx = 0.
$$
Then, as in the proof of Theorem \ref{existence-lemma}, we find $G \in W_{q_0}^{0,1}\left((-r^2,0) \times B_r^+\right)$ such that
$$
\left\{
\begin{aligned}
\Div G &= \partial_t \tilde{g}(t,x) \quad \text{in} \  (-r^2,0) \times B_r^+,
\\
G(t,x) &= 0 \quad for \ (t,x) \in (-r^2,0) \times \partial B_r^+.
\end{aligned}
\right.
$$
We again denote by $G$ the zero extension of $G$ on $(-r^2,0) \times \left( \bR^d_+ \setminus B_r^+ \right)$.
Then, using the fact that $\tilde{g}$ has compact support on $(-r^2,0] \times B_r^+$ and the zero boundary condition of $G(t,\cdot)$ on $\partial B_r^+$, we arrive at
$$
\partial_t \tilde{g} = \Div G
$$
in $(-r^2,0) \times \bR^d_+$.
Hence, the existence of $w$ is ensured by Theorem \ref{existence-lemma}.
Further, it follows from Theorem \ref{existence-lemma} that
$$
\|D^2w\|_{L_{q_0}\left((-r^2,0)\times \bR^d_+\right)} \leq N \left( \|f\|_{L_{q_0}(Q_r^+)} +   \|(a_{ij}-\hat{a}_{ij})D_{ij} u\|_{L_{q_0}(Q_r^+)} + \|D\tilde{g}\|_{L_{q_0}\left((-r^2,0)\times \bR^d_+\right)} \right),
$$
where $N = N(d,\nu,q_0)$.
Note that
\[
\begin{split}
\|D\tilde{g}\|_{L_{q_0}\left((-r^2,0)\times \bR^d_+\right)} & =
\left\|D \left[\left(g - [g(t,\cdot)]_{\zeta_r, B_r^+}\right) \phi_r\right]\right\|_{L_{q_0}\left((-r^2,0)\times \bR^d_+\right)}\\
&  \leq  \|Dg\|_{L_{q_0}(Q_r^+)} + \left\|\left(g - [g(t,\cdot)]_{\zeta_r, B_r^+}\right) D \phi_r \right\|_{L_{q_0}(Q_r^+)} \\
&  \leq \|Dg\|_{L_{q_0}(Q_r^+)} + 4r^{-1} \left\|g - [g(t,\cdot)]_{\zeta_r, B_r^+}\right\|_{L_{q_0}(Q_r^+)} ,
\end{split}
\]
where
\[
\begin{split}
\left\|g - [g(t,\cdot)]_{\zeta_r, B_r^+}\right\|_{L_{q_0}(Q_r^+)} & =\frac{1}{\int_{B_r^+} \zeta_r(y) \, dy} \left\| \int_{B_r^+} \left(g(t,x) - g(t,y)\right) \zeta_r(y) \, dy \right\|_{L_{q_0}(Q_r^+)} \\
& \leq N(d) \fint_{B_r^+} \| g(t,x) - g(t,y)\|_{L_{q_0}(Q_r^+)} \zeta_r(y) \, dy
\end{split}
\]
because $\int_{B_r^+} \zeta_r(y) \, dy$ is comparable to $|B_r^+|$. By H\"older's inequality and the Poincar\'e inequality,
\[
\begin{split}
& \fint_{B_r^+} \| g(t,x) - g(t,y)\|_{L_{q_0}(Q_r^+)} \zeta_r(y) \, dy \\
& \leq N\Big(\fint_{B_r^+} \int_{Q_r^+}| g(t,x) - g(t,y)|^{q_0}\,dx \,dt \, dy\Big)^{\frac 1 {q_0}} \le N(d,q_0) r \|Dg\|_{L_{q_0}(Q_r^+)}.
\end{split}
\]
Hence, we obtain
\begin{equation*}
\|D^2w\|_{L_{q_0}((-r^2,0)\times \bR^d_+)}\le N(d,\nu,q_0)\Big[ \|f\|_{L_{q_0}(Q_r^+)} + \|(a_{ij}-\hat{a}_{ij})D_{ij} u\|_{L_{q_0}(Q_r^+)} +   \|Dg\|_{L_{q_0}(Q_{r}^+)} \Big].
\end{equation*}
From this and by using Assumption \ref{assump1} ($\delta$) and H\"{o}lder's inequality for the middle term on the right-hand side of the last estimate, we have
\begin{align}
(|D^2w|^{q_0})_{Q_{r}^+}^{\frac 1 {q_0}}
 \leq N(d, \nu, q_0) \Big[  (|Dg|^{q_0})_{Q_{r}^+}^{\frac 1 {q_0}} + (|f|^{q_0})_{Q_{r}^+}^{\frac 1 {q_0}} \Big]  \label{eq8.37c}
+ N(d,\nu,q_0, q)  \delta^{\frac 1 {q_0}-\frac 1 {q}}(|D^2u|^q)_{Q_{r}^+}^{\frac 1 {q}}.
\end{align}
Now, let $(v,p_2) =(u-w,p-p_1)$. We see that $(v,p_2)$ satisfies
$$
v_t-\bar a_{ij}(t)D_{ij} v+\nabla p_2=0,\quad \Div v=  [g(t,\cdot)]_{\zeta_r, B_r^+}
$$
in $Q_{2r/3}^+$ with the boundary conditions as in \eqref{bdr-cond}.
By using \eqref{eq7.46b} in Lemma \ref{lem1.2} with suitable scaling, we have
\[
\begin{split}
\|D^2v\|_{L_{q_0}(Q_{r/2}^+)} & \le N
\sum_{i=1}^{d-1}r^{-1}\Big(\|D_{x'}v_i-[D_{x'}v_i]_{B_{2r/3}^+}(t)\|_{L_{q_0}(Q_{2r/3}^+)}
+\|D_{d}v_i\|_{L_{q_0}(Q_{2r/3}^+)}\Big)\nonumber\\
&\quad+N r^{-1} \|D_{x'}v_d\|_{L_{q_0}(Q_{2r/3}^+)},
\end{split}
\]
where $N = N(d,\nu,q_0)$.
It is clear that, for instance, $[D_{x'}v_i]_{B_{2r/3}^+}(t)$ can be replaced with $[D_{x'}v_i]_{B_r^+}(t)$ on the right-hand side of the above inequality.
From this, the triangle inequality, and the Poincar\'e inequality on terms involving $w$, we obtain
\begin{align} \nonumber
& (|D^2v|^{q_0})_{Q_{r/2}^+}^{\frac 1 {q_0}} \\ \nonumber
& \le r^{-1} N(d, \nu, q_0) \Big[ (|Du-[Du]_{B_{r}^+}(t)|^{q_0})_{Q_{r}^+}^{\frac 1 {q_0}} + (|Dw-[Dw]_{B_{r}^+}(t)|^{q_0})_{Q_{r}^+}^{\frac 1 {q_0}}  \Big] \\ \nonumber
& \quad + r^{-1} N(d, \nu, q_0)\sum_{i=1}^{d-1} \Big[ (|D_d u_i|^{q_0})_{Q_r^+}^{\frac 1 {q_0}} + (|D_d w_i|^{q_0})_{Q_r^+}^{\frac 1 {q_0}} \Big]\\ \nonumber
& \quad + r^{-1} N(d, \nu, q_0)  \Big[ (|D_{x'} u_d|^{q_0})_{Q_r^+}^{\frac 1 {q_0}} + (|D_{x'} w_d|^{q_0})_{Q_r^+}^{\frac 1 {q_0}} \\  \nonumber
 & \leq N(d, \nu, q_0) r^{-1}  \Big[ (|Du-[Du]_{B_{r}^+}(t)|^{q_0})_{Q_{r}^+}^{\frac 1 {q_0}}  +  \sum_{i=1}^{d-1} (|D_d u_i|^{q_0})_{Q_{r}^+}^{\frac 1 {q_0}} +   (|D_{x'} u_d|^{q_0})_{Q_{r}^+}^{\frac 1 {q_0}} \Big] \\ \label{eq8.43c}
& \quad   + N(d, \nu, q_0)  (|D^2w|^{q_0})_{Q_{r}^+}^{\frac 1 {q_0}}.
\end{align}
Observe that in the last inequality, we applied the Poincar\'e inequality to the terms $D_d w_i$ and $D_{x'} w_d$ with $i =1, 2,\ldots, d-1$ because these terms vanish on $\{x_d =0\}$. Then, by the triangle inequality and \eqref{eq8.43c}, we infer that
\[
\begin{split}
& (|D^2u|^{q_0})_{Q_{r/2}^+}^{\frac 1 {q_0}}  \leq  (|D^2 w|^{q_0})_{Q_{r/2}^+}^{\frac 1 {q_0}} + (|D^2 v|^{q_0})_{Q_{r/2}^+}^{\frac 1 {q_0}} \\
& \leq N(d, \nu, q_0) r^{-1} \Big[ (|Du-[Du]_{B_{r}^+}(t)|^{q_0})_{Q_{r}^+}^{\frac 1 {q_0}}  +  \sum_{i=1}^{d-1} (|D_d u_i|^{q_0})_{Q_{r}^+}^{\frac 1 {q_0}} +   (|D_{x'} u_d|^{q_0})_{Q_{r}^+}^{\frac 1 {q_0}} \Big] \\
& \qquad + N(d, \nu, q_0)(|D^2w|^{q_0})_{Q_{r}^+}^{\frac 1 {q_0}}.
\end{split}
\]
This estimate and \eqref{eq8.37c} imply \eqref{eq9.51} as well as \eqref{eq9.52}.
The proof of the lemma is thus complete.
\end{proof}

\begin{corollary}
                        \label{cor4.8}
Let $q_0\in (1,\infty)$, $q\in (q_0,\infty)$, $r\in (0,R_0)$, and $\delta\in (0,1)$.
Suppose that $T>0$ and $\Omega\subset \bR^d_+$ satisfies \eqref{eq10.19} for some $\gamma>0$. Let $u\in W^{1,2}_q((-T-r^2,0)\times \Omega^r)^d$ be a strong solution to \eqref{eq7.41c} in $(-T-r^2,0)\times \Omega^r$ with the boundary conditions \eqref{bdr-cond} on $(-T-r^2,0) \times \left(\Omega^r \cap \{x : x_d=0\}\right)$, where $p \in W_1^{0,1}((-T-r^2,0)\times \Omega^r)$, $f \in L_{q_0}((-T-r^2,0)\times \Omega^r)^d$, and $Dg \in L_{q_0}((-T-r^2,0)\times \Omega^r)^d$.
 Further, suppose that Assumption \ref{assump1} ($\delta$) holds.
Then we have
\begin{align} \label{eq9.53}
&(|D^2u|^{q_0})_{(-T,0)\times \Omega}^{\frac 1 {q_0}}\le N(d, \nu, q_0,\gamma)
\frac{((T+r^2) |\Omega^r|)^{\frac 1 {q_0}}}{(T|\Omega|)^{\frac 1 {q_0}}}
\Big[ (|Dg|^{q_0})_{(-T-r^2,0)\times \Omega^r}^{\frac 1 {q_0}}  +  (|f|^{q_0})_{(-T-r^2,0)\times \Omega^r}^{\frac 1 {q_0}}\nonumber\\
& \qquad  + r^{-1}(|Du|^{q_0})_{(-T-r^2,0)\times \Omega^r}^{\frac 1 {q_0}} \Big] +   N(d,\nu,q_0,q,\gamma) \frac{((T+r^2) |\Omega^r|)^{\frac 1 {q}}}{(T|\Omega|)^{\frac 1 {q}}} \delta^{\frac 1 {q_0}-\frac 1 {q}}(|D^2u|^q)_{(-T-r^2,0)\times \Omega^r}^{\frac 1 {q}}.
\end{align}
\end{corollary}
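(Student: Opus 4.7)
The proof will follow from Lemma \ref{lem2.4c} by a parabolic covering argument. I would begin by constructing a finite family of half-cylinders $\{Q_{r/2}^+(z_i)\}_{i\in I}$ with centers $z_i=(t_i,x_i)\in(-T,0]\times\overline{\Omega}$ such that: (a) each enlarged cylinder $Q_r^+(z_i)$ lies inside $(-T-r^2,0]\times\Omega^r$; (b) the family $\{Q_r^+(z_i)\}_{i\in I}$ has overlap bounded by a constant depending only on $d$; (c) $(-T,0]\times\Omega\subset\bigcup_i Q_{r/2}^+(z_i)$; and (d) $|I|\le N(d,\gamma)(T+r^2)|\Omega^r|/r^{d+2}$. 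Such a family can be obtained by choosing a maximal $r/8$-separated net $\{x_i\}\subset\overline{\Omega}$ — so that the balls $B_{r/8}(x_i)$ are pairwise disjoint and the balls $B_{r/4}(x_i)$ cover $\overline{\Omega}$ — combined with a uniform time grid of spacing $r^2/4$. The interior measure condition \eqref{eq10.19} is what delivers (d): each disjoint half-ball $B_{r/8}^+(x_i)$ has volume at least $\tfrac12|B_{r/8}|$, and the union lies in $\Omega^{r/8}\subset\Omega^r$, hence $|I|\lesssim(T+r^2)|\Omega^r|/r^{d+2}$.

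Next, at each center I apply inequality \eqref{eq9.52} of Lemma \ref{lem2.4c} (after translation in $x'$ and rescaling to radius $r$) and raise it to the $q_0$-th power, giving
\[
(|D^2u|^{q_0})_{Q_{r/2}^+(z_i)}\le N\Bigl[A_i^{q_0}+\delta^{q_0(1/q_0-1/q)}(|D^2u|^q)^{q_0/q}_{Q_r^+(z_i)}\Bigr],
\]
where $A_i:=(|Dg|^{q_0})^{1/q_0}_{Q_r^+(z_i)}+(|f|^{q_0})^{1/q_0}_{Q_r^+(z_i)}+r^{-1}(|Du|^{q_0})^{1/q_0}_{Q_r^+(z_i)}$. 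Property (c) gives
\[
\int_{(-T,0]\times\Omega}|D^2u|^{q_0}\,dz\le\sum_i|Q_{r/2}^+(z_i)|\,(|D^2u|^{q_0})_{Q_{r/2}^+(z_i)},
\]
and the inhomogeneous part $\sum_i|Q_{r/2}^+(z_i)|A_i^{q_0}$ is handled directly: using $|Q_{r/2}^+(z_i)|\le N|Q_r^+(z_i)|$ converts the averages in $A_i^{q_0}$ into integrals, and the bounded-overlap property (b) then yields a majorant $N\int_{(-T-r^2,0]\times\Omega^r}[|Dg|^{q_0}+|f|^{q_0}+r^{-q_0}|Du|^{q_0}]\,dz$. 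Dividing by $T|\Omega|$ and taking the $q_0$-th root produces the first term of \eqref{eq9.53} with the prefactor $((T+r^2)|\Omega^r|/(T|\Omega|))^{1/q_0}$.

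The $\delta$-term $\sum_i|Q_{r/2}^+(z_i)|(|D^2u|^q)^{q_0/q}_{Q_r^+(z_i)}$ is the delicate one and is handled by Hölder's inequality with the conjugate exponents $q/q_0$ and $q/(q-q_0)$:
\[
\sum_i|Q_{r/2}^+(z_i)|(|D^2u|^q)^{q_0/q}_{Q_r^+(z_i)}\le N\Bigl(\sum_i|Q_r^+(z_i)|\Bigr)^{(q-q_0)/q}\Bigl(\sum_i\int_{Q_r^+(z_i)}|D^2u|^q\,dz\Bigr)^{q_0/q}.
\]
The first sum is controlled by (b) and the volume of $(-T-r^2,0]\times\Omega^r$, giving $((T+r^2)|\Omega^r|)^{(q-q_0)/q}$; the second sum is controlled by (b) and equals $((T+r^2)|\Omega^r|)^{q_0/q}(|D^2u|^q)^{q_0/q}_{(-T-r^2,0]\times\Omega^r}$. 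Multiplying gives $N(T+r^2)|\Omega^r|\cdot(|D^2u|^q)^{q_0/q}_{(-T-r^2,0]\times\Omega^r}$; dividing by $T|\Omega|$ and taking the $q_0$-th root yields the second term of \eqref{eq9.53} with the stated prefactor. The main obstacle is the covering construction with the sharp cardinality count in (d) — the measure condition \eqref{eq10.19} is indispensable, since without it cusps in $\partial\Omega$ would force $|I|$ to exceed $(T+r^2)|\Omega^r|/r^{d+2}$ by an unbounded factor — together with arranging the Hölder step so that the various volume powers telescope into exactly the prefactors displayed in \eqref{eq9.53}.
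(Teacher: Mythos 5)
Your discrete covering argument is a legitimate alternative to the paper's approach (the paper integrates the pointwise estimate over centers $(t_0,x_0)\in(-T,0)\times\Omega$ and uses Fubini, whereas you sum over a finite net), and the overall architecture — local estimate at each center, then H\"older on the $\delta$-term — mirrors the paper's. However, there is a genuine gap in the H\"older step, and it is caused precisely by your misattribution of where the measure condition \eqref{eq10.19} is used.

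\textbf{The measure condition plays no role in your cardinality bound (d).} The disjointness of the half-balls $B^+_{r/8}(x_i)$, the lower volume bound $|B^+_{r/8}(x_i)|\ge \tfrac12|B_{r/8}|$ for $x_i\in \overline{\bR^d_+}$, and their containment in $\Omega^r$ are purely geometric facts about upper half-balls that hold for any $\Omega\subset\bR^d_+$; nothing about cusps in $\partial\Omega$ changes the count $|I|\lesssim (T+r^2)|\Omega^r|/r^{d+2}$. Condition \eqref{eq10.19} is instead needed to compare $|Q_{r/2}^+(z_i)|$ with $|Q_{r/8}^+(z_i)\cap((-T,0)\times\Omega)|$, and that is exactly the input missing from your H\"older step.

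\textbf{Consequently your H\"older step gives the wrong prefactor.} You bound
$$\sum_i|Q_{r/2}^+(z_i)|\,(|D^2u|^q)^{q_0/q}_{Q_r^+(z_i)}\le N\Bigl(\sum_i|Q_r^+(z_i)|\Bigr)^{1-q_0/q}\Bigl(\sum_i\int_{Q_r^+(z_i)}|D^2u|^q\Bigr)^{q_0/q}$$
and then estimate $\sum_i|Q_r^+(z_i)|\lesssim (T+r^2)|\Omega^r|$. After dividing by $T|\Omega|$ and taking the $q_0$-th root, this produces the prefactor $\bigl((T+r^2)|\Omega^r|/(T|\Omega|)\bigr)^{1/q_0}$ on the $\delta$-term, not the stated $\bigl((T+r^2)|\Omega^r|/(T|\Omega|)\bigr)^{1/q}$. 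Since $q_0<q$ and $(T+r^2)|\Omega^r|\ge T|\Omega|$, yours is strictly weaker, and you have not proved \eqref{eq9.53} as stated. (Your final sentence asserts you obtain "the stated prefactor," but the exponents do not match.) To repair this, use \eqref{eq10.19}: for a suitably chosen grid, $|Q_{r/2}^+(z_i)|\le N(d)\gamma^{-1}\,w_i$ with $w_i:=|Q_{r/8}^+(z_i)\cap\bigl((-T,0)\times\Omega\bigr)|$, and the $w_i$ enjoy bounded overlap so $\sum_i w_i\lesssim T|\Omega|$. Running H\"older against $\sum_i w_i(\cdots)$ instead of $\sum_i|Q_r^+(z_i)|(\cdots)$ replaces the first factor by $(T|\Omega|)^{1-q_0/q}$, and after dividing by $T|\Omega|$ and taking the $q_0$-th root one gets exactly $\gamma^{-1/q_0}\bigl((T+r^2)|\Omega^r|/(T|\Omega|)\bigr)^{1/q}$. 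This is precisely the role the interior measure condition plays in the paper's proof as well.

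A secondary, more minor omission: your centers $x_i$ lie in $\overline\Omega$ and are generally off $\{x_d=0\}$, so \eqref{eq9.52} does not apply directly after "translation in $x'$." You need the corresponding interior estimate (the paper cites \cite[Lemma 4.1]{arXiv:1805.04143}) for $x_{id}\ge r$, and the projection/nesting trick $Q_{r/8}^+(t_i,x_i)\subset Q_{r/2}^+(t_i,\hat x_i)\subset Q_r^+(t_i,x_i)$ (with $\hat x_i$ the projection to $\{x_d=0\}$) for $x_{id}<r$, exactly as the paper does. Also, a maximal $r/8$-separated net makes the balls $B_{r/16}(x_i)$, not $B_{r/8}(x_i)$, pairwise disjoint; the radii in (a)--(d) need a uniform adjustment.
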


\begin{proof}
We use a partition of unity argument. By using \eqref{eq9.52} and the corresponding interior estimate (cf. \cite[Lemma 4.1]{arXiv:1805.04143}), for any $x_0\in \Omega$ and $t_0\in (-T,0)$, we have
\begin{align*}
(|D^2u|^{q_0})_{Q_{r/8}^+(t_0,x_0)}&\le N(d, \nu, q_0) \Big[ (|Dg|^{q_0})_{Q_{r}^+(t_0,x_0)}  +  (|f|^{q_0})_{Q_{r}^+(t_0,x_0)} + r^{-q_0}(|Du|^{q_0})_{Q_{r}^+(t_0,x_0)} \Big]\\
& \qquad +   N(d,\nu,q_0,q) \delta^{1-q_0/q}(|D^2u|^q)_{Q_{r}^+(t_0,x_0)}^{q_0/q}.
\end{align*}
In particular, when $\operatorname{dist}(x_0, \{x_d=0\}) < r/8$ so that we need to apply the boundary estimate \eqref{eq9.52}, we use the relations
$$
Q_{r/8}^+(t_0,x_0) \subset Q_{r/4}^+(t_0,\hat{x}_0) \subset Q_{r/2}^+(t_0,\hat{x}_0) \subset Q_r^+(t_0,x_0),
$$
where $\hat{x}_0$ is the projection of $x_0$ onto $\{x_d=0\}$.

Now to obtain \eqref{eq9.53}, it suffices to integrate both sides of the above inequality with respect to $(t_0,x_0)\in (-T,0)\times \Omega$ and use H\"older's inequality and the interior measure condition \eqref{eq10.19}.
\end{proof}

We now state the following result on the interior mean oscillation estimate of the vorticity solutions, which is \cite[Lemma 4.7]{arXiv:1805.04143}.

\begin{lemma} \label{interior-lemma}
Let $q_1 \in (1, \infty), q_0 \in (1, q_1)$, $\delta \in (0,1)$, $R_0\in (0,1/4)$, $r \in (0, R_0)$, $\kappa \in (0, 1/4)$, and $z_0 \in \overline{Q_1^+}$ such that $Q_r^+(z_0) = Q_r(z_0)$. Suppose that Assumption \ref{assump1} ($\delta$) holds. Let $u\in W^{1,2}_{q_1}(Q_r(z_0))^d$ be a strong solution of \eqref{eq7.41c} in $Q_r(z_0)$, where $p \in W_1^{0,1}(Q_r(z_0))$, $f \in L_{q_0}(Q_r(z_0))^d$, and $Dg \in L_{q_0}(Q_{r}(z_0))^d$.
Then
\begin{align*}
& (|D\omega -(D\omega)_{Q_{\kappa r}(z_0)}|)_{Q_{\kappa r}(z_0)}\\
& \leq N(d, \nu, q_0) \kappa^{-\frac{d+2}{q_0}}(|f|^{q_0})_{Q_{r}(z_0)}^{\frac 1 {q_0}}  + N(d, q_0) \kappa^{-\frac{d+2}{q_0}} (|Dg|^{q_0})_{Q_{r}(z_0)}^{\frac 1 {q_0}} \\
& \quad \quad + N(n, \nu, q_0, q_1)\Big(\kappa^{-\frac{d+2}{q_0}} \delta^{\frac 1 {q_0}-\frac 1 {q_1}} + \kappa\Big) (|D^2u|^{q_1})_{Q_{r}(z_0)}^{1/{q_1}},
\end{align*}
where $\omega = \nabla \times u$ is the matrix of vorticity.
\end{lemma}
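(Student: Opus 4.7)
Since $Q_r^+(z_0)$ coincides with the full cylinder $Q_r(z_0)$, this is a purely interior estimate, and I would mirror the perturbation strategy of Lemma \ref{lem2.4c} but track the mean oscillation of $D\omega$ rather than $L^{q_0}$ averages of $D^2u$. Pick the $t$-only matrix $\hat{a}_{ij}(t)$ furnished by Assumption \ref{assump1}$(\delta)$ on $Q_r(z_0)$ and decompose $u = w + v$, $p = p_1 + p_2$, where $w$ absorbs the coefficient error, the forcing $f$, and the inhomogeneous part of the divergence, while $v = u-w$ solves a simple Stokes system with $t$-only coefficients. Following the construction in the proof of Lemma \ref{lem2.4c}, the whole-space solvability statement of the type in Theorem \ref{existence-lemma} (now without boundary) produces $(w,p_1)$ with
\[
w_t - \hat{a}_{ij}(t) D_{ij} w + \nabla p_1 = \mathbf{1}_{Q_r(z_0)}\bigl[f + (a_{ij} - \hat{a}_{ij}) D_{ij} u\bigr], \quad \Div w = \bigl(g - [g(t,\cdot)]_{\zeta_r}\bigr)\phi_r,
\]
zero initial data, and $\phi_r = \zeta_r \psi_r$ a standard cutoff supported in $Q_r(z_0)$. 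The Poincar\'e inequality bounds $\|g - [g]_{\zeta_r}\|_{L_{q_0}(Q_r)}$ by $N r\|Dg\|_{L_{q_0}(Q_r)}$, while H\"older's inequality combined with the VMO bound $\fint_{Q_r}|a-\hat a|\le \delta$ yields
\[
(|D^2 w|^{q_0})_{Q_r(z_0)}^{1/q_0} \le N\Bigl[(|f|^{q_0})_{Q_r}^{1/q_0} + (|Dg|^{q_0})_{Q_r}^{1/q_0} + \delta^{1/q_0 - 1/q_1}(|D^2 u|^{q_1})_{Q_r}^{1/q_1}\Bigr].
\]

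The remainder $v$ satisfies $v_t - \hat{a}_{ij}(t) D_{ij} v + \nabla p_2 = 0$ with $\Div v = [g(t,\cdot)]_{\zeta_r}$, a function of $t$ alone, on a slightly smaller interior cylinder. The vorticity $\omega^v = \nabla \times v$ therefore solves a parabolic equation with $t$-only coefficients, and the interior counterpart of Lemma \ref{lem1.3} yields a $C^{1/2,1}$ bound on $D\omega^v$ over $Q_{r/4}(z_0)$ controlled, after rescaling, by the $L^{q_0}$-average of $D\omega^v$ over $Q_{r/2}(z_0)$. Turning this Lipschitz-type control into a mean oscillation bound on $Q_{\kappa r}(z_0)\subset Q_{r/4}(z_0)$ (valid since $\kappa<1/4$) gives
\[
(|D\omega^v - (D\omega^v)_{Q_{\kappa r}(z_0)}|)_{Q_{\kappa r}(z_0)} \le N\kappa\,(|D\omega^v|^{q_0})_{Q_{r/2}}^{1/q_0} \le N\kappa\,\bigl[(|D^2 u|^{q_1})_{Q_r}^{1/q_1} + (|D^2 w|^{q_0})_{Q_r}^{1/q_0}\bigr],
\]
using $|D\omega^v|\le N|D^2 v|\le N(|D^2u|+|D^2w|)$ together with Jensen's inequality. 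Writing $D\omega = D\omega^w + D\omega^v$, estimating the $D\omega^w$ piece via $|D\omega^w|\le N|D^2 w|$ and H\"older on $Q_{\kappa r}\subset Q_r$ (producing the factor $\kappa^{-(d+2)/q_0}$), and absorbing the $\kappa(|D^2 w|^{q_0})_{Q_r}^{1/q_0}$ contribution into the $\kappa^{-(d+2)/q_0}(|D^2 w|^{q_0})_{Q_r}^{1/q_0}$ term (since $\kappa^{-(d+2)/q_0}\ge \kappa$), I combine to
\[
(|D\omega - (D\omega)_{Q_{\kappa r}}|)_{Q_{\kappa r}} \le N\kappa^{-(d+2)/q_0}(|D^2 w|^{q_0})_{Q_r}^{1/q_0} + N\kappa\,(|D^2 u|^{q_1})_{Q_r}^{1/q_1}.
\]
Substituting the estimate on $D^2 w$ then recovers the three-term bound, producing the characteristic combination $\kappa^{-(d+2)/q_0}\delta^{1/q_0 - 1/q_1} + \kappa$ multiplying $(|D^2 u|^{q_1})_{Q_r}^{1/q_1}$.

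The main obstacle is arranging the divergence split so that $\Div v$ depends only on $t$: this is precisely what makes $\omega^v$ satisfy a $t$-only-coefficient parabolic equation and thus enjoy the interior analog of the regularity in Lemma \ref{lem1.3}. As a consequence, $w$ must solve a divergence equation with right-hand side $(g - [g]_{\zeta_r})\phi_r$, and the hypothesis $g_t = \Div G$ required by Theorem \ref{existence-lemma} must be verified by constructing an auxiliary $G$ via the divergence equation on a star-shaped ball, exactly as in Lemma \ref{lem2.4c}. The other care-intensive step is balancing the blow-up $\kappa^{-(d+2)/q_0}$ arising from shrinking averages to $Q_{\kappa r}$ against the gain $\kappa$ from the $C^{1/2,1}$ regularity of $D\omega^v$; this interplay is what forces the specific coefficient structure $\kappa^{-(d+2)/q_0}\delta^{1/q_0 - 1/q_1} + \kappa$ that will later be optimized in $\kappa$ and $\delta$ when the lemma is fed into a Fefferman--Stein sharp-function argument.
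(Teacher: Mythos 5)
Your proposal is correct, and it is essentially the intended argument: the paper does not prove this lemma itself but quotes it from \cite[Lemma 4.7]{arXiv:1805.04143}, and your decomposition $u=w+v$ with the $L_{q_0}$ bound on $D^2w$ (the analogue of \eqref{eq8.37c}), the interior $C^{1/2,1}$ estimate for $D\omega^v$ yielding the factor $\kappa$, and the H\"older step producing $\kappa^{-(d+2)/q_0}$ is exactly the interior counterpart of the paper's own proof of the boundary version, Lemma \ref{non-vorticity-oss.est}. You also correctly identify why the interior case gives $\kappa$ rather than the $\kappa^{1/2}$ of the boundary case (full Lipschitz regularity of $D\omega^v$ is available without the Dirichlet-boundary obstruction on the components $\omega_{dl}$).
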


In the next lemma, we prove a boundary mean oscillation estimate of the derivatives of the vorticity matrix $\omega = \nabla \times u$.

\begin{lemma} \label{non-vorticity-oss.est}
Let $q_1 \in (1, \infty), q_0 \in (1, q_1)$, $\delta \in (0,1)$, $R_0\in (0,1/4)$, $r \in (0, R_0/4)$, $\kappa \in (0, 1/4)$, and $z_0 \in \overline{Q_1^+}$.
Suppose that Assumption \ref{assump1} ($\delta$) holds. Let $u\in W^{1,2}_{q_1}(Q^+_{5r}(z_0))^d$ be a strong solution to \eqref{eq7.41c} in $Q^+_{5r}(z_0)$ with the boundary conditions \eqref{bdr-cond} on $Q_{5r}^+(z_0) \cap \{(t,x): x_d=0\}$,  where $p \in W_1^{0,1}(Q_{5r}^+(z_0))$,  $f \in L_{q_0}(Q_{5r}^+(z_0))^d$, and $Dg \in L_{q_0}(Q_{5r}^+(z_0))^d$.
Then \begin{align*}
&(|D\omega -(D\omega)_{Q_{\kappa r}^+(z_0 )}|)_{Q_{\kappa r}^+(z_0 )}
\\
& \leq N(d, \nu, q_0) \kappa^{-\frac{d+2}{q_0}}(|f|^{q_0})_{Q_{5r}^+(z_0 )}^{\frac 1 {q_0}}  + N(d, \nu, q_0) \kappa^{-\frac{d+2}{q_0}} (|Dg|^{q_0})_{Q_{5r}^+(z_0 )}^{\frac 1 {q_0}} \\
& \quad \quad + N(n, \nu, q_0, q_1)\Big(\kappa^{-\frac{d+2}{q_0}} \delta^{\frac 1 {q_0}-\frac 1 {q_1}} + \kappa^{\frac 1 2}\Big) (|D^2u|^{q_1})_{Q_{5r}^+(z_0 )}^{1/{q_1}}.
\end{align*}
\end{lemma}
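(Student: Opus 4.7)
The proof mirrors the interior Lemma \ref{interior-lemma}, using the boundary ingredients developed earlier: Theorem \ref{existence-lemma} for solvability of the Lions problem in the half space, the cutoff-and-average trick from the proof of Lemma \ref{lem2.4c}, and Lemma \ref{lem1.3} for the H\"older regularity of the vorticity of simple-coefficient solutions. Preliminarily, if $x_{0,d} \geq r$ then (after at most a shift by $r$) $Q^+_{\kappa r}(z_0)$ is an interior cylinder inside $Q^+_{5r}(z_0)$ and the stronger Lemma \ref{interior-lemma} applies directly with $\kappa$ in place of $\kappa^{1/2}$. Hence we may assume $x_{0,d} < r$; by a harmless translation (absorbed by the gap between $\kappa r$ and $5r$), we may furthermore take $z_0 \in \{x_d = 0\}$.

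Let $\hat{a}_{ij}(t)$ be the averaged coefficients from Assumption \ref{assump1}($\delta$) on $Q_{5r}^+(z_0)$, and fix cutoffs $\chi, \phi$ supported in $Q_{5r}^+(z_0)$ equal to one on $Q_{4r}^+(z_0)$. Exactly as in the proof of Lemma \ref{lem2.4c}, Theorem \ref{existence-lemma} produces a strong solution $(w, p_w)$ on $(t_0 - (5r)^2, t_0) \times \bR^d_+$ with Lions BCs and zero initial data, of
\[
w_t - \hat{a}_{ij}(t)D_{ij}w + \nabla p_w = [f + (a_{ij} - \hat{a}_{ij})D_{ij}u]\chi, \qquad \Div w = (g - [g]_{\zeta,B_{5r}^+})\phi.
\]
Theorem \ref{existence-lemma}, Assumption \ref{assump1}($\delta$), H\"older's inequality, and the Poincar\'e step from Lemma \ref{lem2.4c} yield $(|D^2 w|^{q_0})_{Q_{5r}^+(z_0)}^{1/q_0} \leq N \cR$, where
\[
\cR := (|f|^{q_0})_{Q_{5r}^+(z_0)}^{1/q_0} + (|Dg|^{q_0})_{Q_{5r}^+(z_0)}^{1/q_0} + \delta^{1/q_0 - 1/q_1}(|D^2 u|^{q_1})_{Q_{5r}^+(z_0)}^{1/q_1}.
\]
By Jensen's inequality, this gives $(|D\omega_w|)_{Q_{\kappa r}^+(z_0)} \leq N \kappa^{-(d+2)/q_0} \cR$.

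The remainder $v = u - w$ solves, on $Q_{4r}^+(z_0)$, the homogeneous Stokes system
\[
v_t - \hat{a}_{ij}(t)D_{ij}v + \nabla p_v = 0, \qquad \Div v = [g]_{\zeta,B_{5r}^+}(t),
\]
with Lions BCs (since both $u$ and $w$ satisfy them). Since $\Div v$ depends only on $t$, rescaling the second part of Lemma \ref{lem1.3} (with $\alpha = 1/2$) from $Q_1^+$ to $Q_{4r}^+$ gives
\[
[D\omega_v]_{C^{1/4,1/2}(Q_{2r}^+(z_0))} \leq N r^{-1/2 - (d+2)/q_0}\|D\omega_v\|_{L_{q_0}(Q_{4r}^+(z_0))}.
\]
Combining the mean-oscillation bound $(|h - (h)_Q|)_Q \leq N (\diam Q)^{1/2}[h]_{C^{1/4,1/2}}$ with $D\omega_v = D\omega_u - D\omega_w$, Jensen (to pass from $L_{q_0}$ to $L_{q_1}$ averages on $D^2 u$), and the bound on $D^2 w$ from the previous paragraph, we obtain
\[
(|D\omega_v - (D\omega_v)_{Q_{\kappa r}^+(z_0)}|)_{Q_{\kappa r}^+(z_0)} \leq N \kappa^{1/2}\bigl[(|D^2 u|^{q_1})_{Q_{5r}^+(z_0)}^{1/q_1} + \cR\bigr].
\]
Writing $D\omega = D\omega_w + D\omega_v$, the triangle inequality bounds the target by $N\kappa^{-(d+2)/q_0}\cR + N\kappa^{1/2}[(|D^2 u|^{q_1})^{1/q_1} + \cR]$. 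Collecting the $(|f|^{q_0})^{1/q_0}$ and $(|Dg|^{q_0})^{1/q_0}$ contributions under the coefficient $N\kappa^{-(d+2)/q_0}$, and the $(|D^2 u|^{q_1})^{1/q_1}$ contributions under $N(\kappa^{-(d+2)/q_0}\delta^{1/q_0 - 1/q_1} + \kappa^{1/2})$, yields the claimed estimate.

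The main technical point is the good-part estimate: one must verify that $v \in W^{1,2}_{q_1}(Q_{3r}^+(z_0))$ (so that $D\omega_v \in L_{q_1}$ and Lemma \ref{lem1.3} genuinely applies) and that $\Div v$ is spatially constant on the relevant region. The $L_{q_1}$-regularity is inherited from the hypothesis $u \in W^{1,2}_{q_1}$ together with Theorem \ref{existence-lemma} applied at the $q_1$ level, while the spatial constancy of $\Div v$ is arranged by the cutoff construction. A secondary subtlety is that only $\alpha < 1$ is available for the H\"older exponent of $D\omega_v$ in Lemma \ref{lem1.3}, owing to the mixed Dirichlet/Neumann structure of the extended vorticity; fixing $\alpha = 1/2$ for concreteness is what forces the $\kappa^{1/2}$ (rather than $\kappa$) decay in the statement.
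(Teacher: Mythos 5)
Your proof follows the paper's argument closely: the interior/boundary case split (with Lemma \ref{interior-lemma} covering $x_{0,d}\geq r$), the $u=w+v$ decomposition with $w$ produced by Theorem \ref{existence-lemma} via the cutoff-and-average construction from Lemma \ref{lem2.4c}, the $L_{q_0}$ bound \eqref{eq8.37c} for $D^2 w$, the rescaled H\"older estimate for $D\omega_v$ from Lemma \ref{lem1.3} with $\alpha=1/2$, and the triangle inequality.

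The one imprecision is the ``harmless translation, so we may take $z_0\in\{x_d=0\}$'' reduction: the half-space $\{x_d>0\}$ is not invariant under a shift in $x_d$, so you cannot literally move $z_0$ to the boundary, and the mean oscillation in the lemma must be taken over $Q_{\kappa r}^+(z_0)$ centered at the \emph{original} $z_0$, not at its projection. The paper handles this by keeping $z_0$ fixed and introducing $\hat z_0=(t_0,x_0',0)$ only as the center of the larger cylinders on which the decomposition and H\"older estimate are performed, using the chain $Q_{\kappa r}^+(z_0)\subset Q_{4r/3}^+(\hat z_0)\subset Q_{4r}^+(\hat z_0)\subset Q_{5r}^+(z_0)$; the H\"older semi-norm of $D\omega_v$ on $Q_{4r/3}^+(\hat z_0)$ then controls the oscillation on the sub-cylinder $Q_{\kappa r}^+(z_0)$. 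Your subsequent estimates are in fact consistent with this (you bound the oscillation on $Q_{\kappa r}^+(z_0)$ by the H\"older semi-norm on a larger cylinder), so the fix is merely to replace the translation sentence by the explicit inclusion argument.
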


\begin{proof}
We write $z_0 = (t_0, x_0', x_{d0})$, and we split the proof into two cases.\\
\noindent
{\bf Case I}: $x_{d0} \geq r$. In this case, as $Q_{r}^+(z_0) = Q_r(z_0)$, we use Lemma \ref{interior-lemma} to conclude that
\begin{align*}
&(|D\omega -(D\omega)_{Q_{\kappa r}^+(z_0)}|)_{Q_{\kappa r}^+(z_0)}
\\
& \leq N(d, \nu, q_0) \kappa^{-\frac{d+2}{q_0}}(|f|^{q_0})_{Q_{r}^+(z_0)}^{\frac 1 {q_0}}  + N(d, q_0) \kappa^{-\frac{d+2}{q_0}} (|Dg|^{q_0})_{Q_{r}^+(z_0)}^{\frac 1 {q_0}} \\
& \quad \quad + N(n, \nu, q_0, q_1)\Big(\kappa^{-\frac{d+2}{q_0}} \delta^{\frac 1 {q_0}-\frac 1 {q_1}} + \kappa\Big) (|D^2u|^{q_1})_{Q_{r}^+(z_0)}^{1/{q_1}}.
\end{align*}
In addition, observe that since $Q_r^+(z_0) \subset Q_{5r}^+(z_0)$,
\[
(|h|)_{Q_{r}^+(z_0)} \leq N(d)  (|h|)_{Q_{5r}^+(z_0)}
\]
for every measurable function $h$. Therefore, the assertion of the lemma follows.

\noindent
{\bf Case II}: $x_{d0} < r$. In this case, we write $\hat{z}_0 = (t_0, \hat{x}_0)$, where $\hat{x}_0 =( x_0', 0)$. We observe that $Q_{r}^+(z_0) \subset Q_{2r}^+(\hat{z}_0)$. Moreover, as $\kappa < 1/4$ and $|z_0 -\hat{z}_0| <r$, we see that
\[
Q_{\kappa r}^+(z_0) \subset Q_{4r/3}^+(\hat{z}_0) \subset Q_{2r}^+(\hat{z}_0) \subset Q_{4r}^+(\hat{z}_0)  \subset Q_{5r}^+(z_0).
\]
Let $(w, p_1)$ and $(v, p_2)$ be as in the proof of Lemma \ref{lem2.4c}.
In particular, $(w, p_1)$ is the strong solution of
\[
\left\{
\begin{array}{ccc}
  w_t - \hat{a}_{ij}(t) D_{ij} w + \nabla p_1 & = & I_{Q_{4r}(\hat{z}_0)} [f + (a_{ij} - \hat{a}_{ij}(t)) D_{ij} u],   \\
 \Div w &  =  & \phi_{4r}(z-\hat z_0) (g - [g(t,\cdot)]_{\zeta_{4r}(\cdot-\hat x_0),B_{4r}^+(\hat{x}_0)})
 \end{array} \right.
\]
in $(-(4r)^2+t_0, t_0) \times \bR^d_+$ with zero initial condition at $t = t_0-(4r)^2$ and the Lions boundary conditions
\[
D_d w_k = w_d =0  \quad \text{on} \  \{x_d =0\} \quad \text{for} \  k = 1,2,\ldots, d-1.
\]
Here $\hat{a}_{ij}(t)$ is the matrix defined in Assumption \ref{assump1} ($\delta$) such that
\[
\fint_{Q_{4r}^+(\hat z_0)} |a_{ij}(t,x)-\hat{a}_{ij}(t)|\,dx\,dt\le \delta, \quad \forall \ i, j =1,2,\ldots, d.
\]
Moreover, $(v, p_2) = (u-w, p- p_1)$ is a strong solution of
\[
v_t - \bar{a}_{ij}(t) D_{ij} v + \nabla p_2 = 0, \quad \Div v = [g(t,\cdot)]_{\zeta_{4r}(\cdot-\hat x_0),B_{4r}^+(\hat{x}_0)}
\]
in $Q_{8r/3}^+(\hat{z}_0)$ satisfying the Lions boundary conditions on $\{x_d = 0\}$. Let us denote by $\omega_1 = \nabla \times w$ and $\omega_2 = \nabla \times v$ the vorticity matrices of $w$ and $v$, respectively. We deduce from \eqref{eq8.37c} that
\begin{align} \nonumber
& (|D\omega_1|^{q_0})_{Q_{4r}^+(\hat{z}_0)}^{\frac 1 {q_0}}  \leq (|D^2 w|^{q_0})_{Q_{4r}^+(\hat{z}_0)}^{\frac 1 {q_0}}  \label{non-omega-1.est} \\
& \leq  N(d, \nu, q_0)\Big[ (|Dg|^{q_0})_{Q_{4r}^+(\hat{z}_0)}^{\frac 1 {q_0}} +  (|f|^{q_0})_{Q_{4r}^+(\hat{z}_0)}^{\frac 1 {q_0}} \Big] + \delta^{\frac 1 {q_0}-\frac 1 {q_1}}  N(d, \nu, q_1, q_0) (|D^2u|^{q_1})_{Q_{4r}^+(\hat{z}_0)}^{\frac 1 {q_1}}.
\end{align}
By applying Lemma \ref{lem1.3} with $\alpha=1/2$ and suitable scaling, the triangle inequality, and H\"{o}lder's inequality, we obtain
\begin{align*}
& (|D\omega_2 - (D\omega_2)_{Q_{\kappa r}^+(z_0)}|)_{Q_{\kappa r}^+(z_0)}
\\
& \leq N(\kappa r)^{\frac 1 2} [[D\omega_2]]_{C^{\frac 1 4,\frac 1 2}(Q_{4r/3}^+(\hat{z}_0))} \leq N(d, \nu, q_0) \kappa^{\frac 1 2} (|D\omega_2|^{q_0})_{Q_{8r/3}^+(\hat{z}_0)}^{\frac 1 {q_0}}  \\
& \leq  N(d, \nu, q_0) \kappa^{\frac 1 2} \Big[ (|D\omega|^{q_0})_{Q_{4r}^+(\hat{z}_0)}^{\frac 1 {q_0}} + (|D\omega_1|^{q_0})_{Q_{4r}^+(\hat{z}_0)}^{\frac 1 {q_0}}\Big]\\
& \leq N(d, \nu, q_0) \kappa^{\frac 1 2} \Big[ (|D^2 u|^{q_1})_{Q_{4r}^+(\hat{z}_0)}^{\frac 1 {q_1}} + (|D\omega_1|^{q_0})_{Q_{4r}^+(\hat{z}_0)}^{\frac 1 {q_0}}\Big].
\end{align*}
Then, by combining this estimate with \eqref{non-omega-1.est} and the fact that $\delta \in (0,1)$, we infer that
\begin{align}\nonumber
(|D\omega_2 - (D\omega_2)_{Q_{\kappa r}^+(z_0)}|)_{Q_{\kappa r}^+(z_0)} & \leq N(d, \nu, q_0)\kappa^{\frac 1 2} \Big[ (|Dg|^{q_0})_{Q_{4r}^+(\hat{z}_0)}^{\frac 1 {q_0}}  +  (|f|^{q_0})_{Q_{4r}^+(\hat{z}_0)}^{\frac 1 {q_0}} \Big] \\  \label{non-omega-2.est}
& \qquad +   N(d, \nu, q_1, q_0)(\kappa^{\frac 1 2}    + \delta^{\frac 1 {q_0}-\frac 1 {q_1}}) (|D^2u|^{q_1})_{Q_{4r}^+(\hat{z}_0)}^{\frac 1 {q_1}}.
\end{align}
Now, by using the inequality
\[
\fint_{Q_{\kappa r}^+(z_0)}|D\omega -(D\omega)_{Q_{\kappa r}^+(z_0)}| \,dx\, dt \leq 2 \fint_{Q_{\kappa r}^+(z_0)}|D\omega -c| \,dx\,dt
\]
with $c = (D\omega_2)_{Q_{\kappa r}^+(z_0)}$, and then applying
the triangle inequality and H\"{o}lder's inequality, we have
\begin{align*}
&\fint_{Q_{\kappa r}^+(z_0)}|D\omega -(D\omega)_{Q_{\kappa r}^+(z_0)}| \,dx\, dt   \leq 2 \fint_{Q_{\kappa r}^+(z_0)} |D\omega - (D\omega_2)_{Q_{\kappa r}^+(z_0)}| \,dx\, dt\\
& \leq 2  \fint_{Q_{\kappa r}^+(z_0)} |D\omega_2 - (D\omega_2)_{Q_{\kappa r}^+(z_0)}| \,dx\, dt  + N(d, q_0) \kappa^{-\frac{d+2}{q_0}}\left(\fint_{Q_{r}^+(z_0)} |D\omega_1|^{q_0} \,dx\, dt\right)^{\frac 1 {q_0}} \\
& \leq 2  \fint_{Q_{\kappa r}^+(z_0)} |D\omega_2 - (D\omega_2)_{Q_{\kappa r}^+(z_0)}| \,dx\, dt  + N(d, q_0) \kappa^{-\frac{d+2}{q_0}}\left(\fint_{Q_{4r}^+(\hat{z}_0)} |D\omega_1|^{q_0} \,dx\, dt\right)^{\frac 1 {q_0}} .
\end{align*}
This estimate, \eqref{non-omega-1.est}, and \eqref{non-omega-2.est} imply that
\begin{align*}
& (|D\omega -(D\omega)_{Q_{\kappa r}^+(z_0)}|)_{Q_{\kappa r}^+(z_0)}\\
 & \leq N(d, \nu, q_0) \kappa^{-\frac{d+2}{q_0}}(|f|^{q_0})_{Q_{4r}^+(\hat{z}_0)}^{\frac 1 {q_0}} +   N(d, \nu, q_0)\kappa^{-\frac{d+2}{q_0}} (|Dg|^{q_0})_{Q_{4r}^+(\hat{z}_0)}^{\frac 1 {q_0}}   \\
& \quad \quad + N(d, \nu, q_0, q_1)\Big(\kappa^{-\frac{d+2}{q_0}} \delta^{\frac 1 {q_0}-\frac 1 {q_1}}+ \kappa^{\frac 1 2}\Big) (|D^2u|^{q_1})_{Q_{4r}^+(\hat{z}_0)}^{\frac 1 {q_1}}.
\end{align*}
Again, as $Q_{4r}^+(\hat{z}_0) \subset Q_{5r}^+(z_0)$, we see that
\[
(|h|)_{Q_{4r}^+(\hat{z}_0)} \leq N(d) (|h|)_{Q_{5r}^+(z_0)}
\]
for every measurable function $h$, so the assertion of the lemma follows. The proof is then complete.
\end{proof}

Our next lemma gives the key estimates of $D\omega $ and $D^2u$ in the mixed norm.

\begin{lemma}
Let $R\in [1/2,1)$, $R_1\in (0,R_0)$, $\delta \in (0,1)$, $\kappa \in (0, 1/4)$, $s, q \in (1, \infty)$, $q_1 \in (1,\min\{s,q\})$, and $q_0 \in (1, q_1)$.
Assume that Assumption \ref{assump1} ($\delta$) holds. Let $u\in W^{1,2}_{s,q}(Q_{R+R_1}^+)^d$ be a strong solution to \eqref{eq7.41c} in $Q_{R+R_1}^+$ with the boundary conditions \eqref{bdr-cond} on $Q_{R+R_1} \cap \{(t,x): x_d = 0 \}$,  where $p \in W_1^{0,1}(Q_{R+R_1}^+)$, $f \in L_{s,q}(Q_{R+R_1}^+)^d$, and $Dg \in L_{s,q}(Q_{R+R_1}^+)^d$, and let $\omega = \nabla\times u$ denote the matrix of vorticity defined in \eqref{omega-def}.
Then we have
\begin{align}
&\norm{D\omega}_{L_{s,q}(Q_R^+)} \leq N \kappa^{-\frac{d+2}{q_0}}\norm{f}_{L_{s,q}(Q_{R+R_1/2}^+)} + N\kappa^{-\frac{d+2}{q_0}} \norm{Dg}_{L_{s,q}(Q_{R+R_1/2}^+)}  \nonumber\\
                            \label{eq1.39n}
&\quad+N\Big(\kappa^{-\frac{d+2}{q_0}} \delta^{\frac 1 {q_0}-\frac 1 {q_1}} + \kappa^{\frac 1 2}\Big) \norm{D^2u}_{L_{s,q}(Q_{R+R_1/2}^+)}+ N \kappa^{-\frac{d+2}{q_0}} R^{-1}_1\norm{Du}_{L_{s,q}(Q_{R+R_1/2}^+)}
\end{align}
and
\begin{align}
&\norm{D^2u}_{L_{s,q}(Q_R^+)} \leq N \kappa^{-\frac {d+2} {q_0}}\norm{f}_{L_{s,q}(Q_{R+R_1}^+)} +N \kappa^{-\frac {d+2} {q_0}} \norm{Dg}_{L_{s,q}(Q_{R+R_1}^+)}   \nonumber\\
                            \label{eq1.40n}
&\quad+N\Big(\kappa^{-\frac {d+2} {q_0}} \delta^{\frac 1 {q_0}-\frac 1 {q_1}} + \kappa^{\frac 1 2}\Big) \norm{D^2u}_{L_{s,q}(Q_{R+R_1}^+)}+
N \kappa^{-\frac{d+2}{q_0}} R^{-1}_1\norm{Du}_{L_{s,q}(Q_{R+R_1}^+)}.
\end{align}
\end{lemma}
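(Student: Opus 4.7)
The plan is to combine the pointwise mean oscillation estimates from Lemma \ref{interior-lemma}, Lemma \ref{non-vorticity-oss.est}, and Corollary \ref{cor4.8} with the Fefferman--Stein-type extraction in Lemma \ref{mixed-norm-lemma}. I focus first on \eqref{eq1.39n}; the argument for \eqref{eq1.40n} will be parallel.

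Fix a filtration of dyadic partitions of $Q_R^+$ as in the reference cited in Lemma \ref{mixed-norm-lemma}, and let $c$ be a small absolute constant. For a dyadic cell $\mathcal C$ of parabolic radius $r_*\le c R_1$, the half-cylinder $Q_{5r_*}^+$ concentric with $\mathcal C$ lies inside $Q_{R+R_1/2}^+$, so Lemma \ref{non-vorticity-oss.est} (when $\mathcal C$ touches $\{x_d=0\}$) or Lemma \ref{interior-lemma} (interior case) controls $\fint_{\mathcal C}|D\omega-(D\omega)_{\mathcal C}|$ by a sum of parabolic strong maximal functions of $|f|^{q_0}$, $|Dg|^{q_0}$, and $|D^2u|^{q_1}$ on $Q_{R+R_1/2}^+$, with coefficients $N\kappa^{-(d+2)/q_0}$, $N\kappa^{-(d+2)/q_0}$, and $N\bigl(\kappa^{-(d+2)/q_0}\delta^{1/q_0-1/q_1}+\kappa^{1/2}\bigr)$, respectively. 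For cells with $r_*>c R_1$, the crude bound $\fint_{\mathcal C}|D\omega-(D\omega)_{\mathcal C}|\le 2\fint_{\mathcal C}|D\omega|$ followed by an integration-by-parts identity converting averages of $D\omega$ into trace averages of $\omega\sim Du$ on $\partial\mathcal C$ (using the Lions boundary condition to discard the $\{x_d=0\}$ portion) yields a bound of the form $N r_*^{-1}\fint_{\mathcal C^*}|Du|$, which contributes to the term $NR_1^{-1}\|Du\|_{L_{s,q}(Q_{R+R_1/2}^+)}$ in \eqref{eq1.39n}. Taking the supremum over cells containing a given point furnishes a pointwise bound on the dyadic sharp function $(D\omega)^\#_{\mathrm{dy}}$ on $Q_R^+$.

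Inserting this pointwise bound into Lemma \ref{mixed-norm-lemma} and using that the parabolic strong maximal operator is bounded on $L_{s,q}$ whenever $s,q>q_1>q_0$ (by the same extrapolation argument underlying the proof of Lemma \ref{mixed-norm-lemma}), together with H\"older's inequality applied to the $L_1$-tail term, yields \eqref{eq1.39n}. For \eqref{eq1.40n} the scheme is identical, with Corollary \ref{cor4.8} in place of Lemma \ref{non-vorticity-oss.est} and \cite[Lemma 4.1]{arXiv:1805.04143} in place of Lemma \ref{interior-lemma}; since Corollary \ref{cor4.8} already has the term $r^{-1}\|Du\|$ built in, the $R_1^{-1}\|Du\|$ contribution arises automatically at every relevant scale and no separate large-scale argument is needed, which also explains why the slightly larger buffer $R_1$ rather than $R_1/2$ is required on the right-hand side of \eqref{eq1.40n}.

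The main obstacle will be the scale bookkeeping: the small factor $\kappa^{-(d+2)/q_0}\delta^{1/q_0-1/q_1}+\kappa^{1/2}$ must remain attached only to $\|D^2u\|_{L_{s,q}}$, because that is the term later absorbed into the left-hand side after choosing $\kappa$ and $\delta$ small in the proof of Theorem \ref{thm2.3b}, and this separation is what allows the perturbation scheme to close. A secondary subtlety is justifying the integration-by-parts reduction at scales $r_*\sim R_1$ in a way compatible with the Lions boundary conditions, which is where the specific structure of the vorticity equations and their boundary behavior from \eqref{bdr-vorticity} genuinely enter.
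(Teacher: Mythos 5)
Your outline for \eqref{eq1.39n} at \emph{small} scales ($r\lesssim R_1$) matches the paper: use Lemma~\ref{non-vorticity-oss.est} (boundary) and Lemma~\ref{interior-lemma} (interior), dominate the right-hand sides by maximal functions supported in $Q_{R+R_1/2}^+$, insert into Lemma~\ref{mixed-norm-lemma}, and invoke the boundedness of the maximal operator on $L_{s,q}$. However, two of your steps would not actually go through.

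First, your treatment of large cells. After the crude bound $\fint_{\mathcal C}|D\omega-(D\omega)_{\mathcal C}|\le 2\fint_{\mathcal C}|D\omega|$, you propose to control $\fint_{\mathcal C}|D\omega|$ by $r_*^{-1}\fint_{\mathcal C^*}|Du|$ via an integration-by-parts / trace argument. That inequality is an anti-Poincar\'e bound: it fails for arbitrary functions (take $\omega=\sin(Mx_1)$ for large $M$). Integration by parts controls only the \emph{signed average} $\bigl|\fint_{\mathcal C}D\omega\bigr|$, not the $L_1$ average of $|D\omega|$, and the Lions boundary conditions do not repair this. The paper instead inserts Corollary~\ref{cor4.8} at this point (see \eqref{eq9.19}); Corollary~\ref{cor4.8} is a genuine Caccioppoli-type bound that uses the Stokes PDE structure, the perturbation from the $t$-only coefficient problem via Lemma~\ref{lem2.4c}, and the solvability in Theorem~\ref{existence-lemma}, so it is not something one can replace by a purely integration-by-parts shortcut.

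Second, and more seriously, your plan for \eqref{eq1.40n} — re-running the sharp-function argument directly on $D^2u$ with Corollary~\ref{cor4.8} as the small-scale estimate — does not work, because Corollary~\ref{cor4.8} is not a mean-oscillation estimate. It controls $(|D^2u|^{q_0})^{1/q_0}$ by quantities on a larger set but carries no $\kappa$-smallness on the $|D^2u|$ term: the small factor $\kappa^{1/2}$ in front of $\|D^2u\|$ in the target estimates comes from the H\"older estimate of Lemma~\ref{lem1.3} applied to the \emph{vorticity} of the frozen-coefficient comparison solution $v$, and no analogous H\"older estimate exists directly for $D^2u$. Feeding Corollary~\ref{cor4.8} into the sharp-function machinery at small scales would at best give $(D^2u)^\#_{\text{dy}}\lesssim \mathcal M(|D^2u|^{q_1})^{1/q_1}+\cdots$ with an $O(1)$ coefficient, which is tautological and cannot be absorbed. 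The paper instead obtains \eqref{eq1.40n} by a short two-step argument: first use the time-sliced elliptic relation $\Delta u_l = D_l g + \sum_k D_k\omega_{kl}$ (with the Dirichlet/Neumann boundary conditions inherited from \eqref{simple-bd-cond}) and the boundary mixed-norm $W^1_q$ estimate, together with Poincar\'e, to get \eqref{eq1.52n}, i.e.\ $\|D^2u\|_{L_{s,q}(Q_R^+)}\lesssim \|D\omega\|_{L_{s,q}(Q_{R+R_1/2}^+)}+\|Dg\|_{L_{s,q}(Q_{R+R_1/2}^+)}+R_1^{-1}\|Du\|_{L_{s,q}(Q_{R+R_1/2}^+)}$; then substitute \eqref{eq1.39n} (applied with $R+R_1/2$ in place of $R$). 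This composition is also the real reason the buffer widens from $R_1/2$ in \eqref{eq1.39n} to $R_1$ in \eqref{eq1.40n}.
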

\begin{proof}
We first prove \eqref{eq1.39n}.
We consider two cases.
\\ \noindent
{\bf Case I}: $r \in (0, R_1/10)$.
It follows from Lemma \ref{non-vorticity-oss.est} that for all $z_0 \in \overline{Q_R^+}$,
\begin{align*}
&(|D\omega -(D\omega)_{Q_{\kappa r}^+(z_0)}|)_{Q_{\kappa r}^+(z_0)} \leq
N(d, \nu, q_0) \kappa^{-\frac{d+2}{q_0}} (|f|^{q_0})^{\frac 1 {q_0}}_{Q_{5r}^+(z_0)} \\
& + N(d, \nu, q_0) \kappa^{-\frac{d+2}{q_0}} (|Dg|^{q_0})^{\frac 1 {q_0}}_{Q_{5r}^+(z_0)}
+ N(d, \nu, q_0, q_1)\Big(\kappa^{-\frac{d+2}{q_0}} \delta^{\frac 1 {q_0}-\frac 1 {q_1}}+ \kappa^{\frac 1 2}\Big)(|D^2u|^{q_1})^{\frac 1 {q_1}}_{Q_{5r}^+(z_0)}.
\end{align*}
Observe that because $r < R_1/10$, we have $Q_{5r}^+(z_0) \subset Q_{R+R_1/2}^+$. Therefore,
\[
\begin{split}
&  (|Dg|^{q_0})^{\frac 1 {q_0}}_{Q_{5r}^+(z_0)} \leq \mathcal{M}(I_{Q_{_{R+R_1/2}}^+}|Dg|^{q_0})^{\frac 1 {q_0}}(z_0), \\
& (|f|^{q_0})^{\frac 1 {q_0}}_{Q_{5r}^+(z_0)} \leq \mathcal{M}(I_{Q_{_{R+R_1/2}}^+}|f|^{q_0})^{\frac 1 {q_0}}(z_0),  \quad  \text{and} \\
& (|D^2u|^{q_1})^{\frac 1 {q_1}}_{Q_{5r}^+(z_0)}  \le \mathcal{M}(I_{Q_{_{R+R_1/2}}^+}|D^2u|^{q_1})^{\frac 1 {q_1}}(z_0),
\end{split}
\]
where $\mathcal{M}$ is the Hardy--Littlewood maximal function. These estimates imply that
\begin{align*}
&(|D\omega -(D\omega)_{Q_{\kappa r}^+(z_0)}|)_{Q_{\kappa r}^+(z_0)}
 \leq  N \kappa^{-\frac{d+2}{q_0}} \mathcal{M}(I_{Q_{_{R+R_1/2}}^+}|f|^{q_0})^{\frac 1 {q_0}}(z_0)\\
&\quad\quad +N \kappa^{-\frac{d+2}{q_0}} \mathcal{M}(I_{Q_{_{R+R_1/2}}^+}|Dg|^{q_0})^{\frac 1 {q_0}}(z_0)
+ N\Big(\kappa^{-\frac{d+2}{q_0}} \delta^{\frac 1 {q_0}-\frac 1 {q_1}}+ \kappa^{\frac 1 2}\Big)\mathcal{M}(I_{Q_{_{R+R_1/2}}^+}|D^2u|^{q_1})^{\frac 1 {q_1}}(z_0).
\end{align*}
\noindent
{\bf Case II}: $r\in [R_1/10, 2R/\kappa)$ and $z_0 = (t_0,x_0) \in \overline{Q_R^+}$ such that $t_0 \in [-R^2 + (\kappa r)^2/2, 0]$. In this case, we simply estimate
\begin{align}
& (|D\omega -(D\omega)_{Q_{\kappa r}^+(z_0)\cap Q_R^+}|)_{Q_{\kappa r}^+(z_0)\cap Q_R^+}
\le 2(|D\omega|)_{Q_{\kappa r}^+(z_0)\cap Q_R^+}
\le 2(|D\omega|^{q_0})^{\frac 1 {q_0}}_{Q_{\kappa r}^+(z_0)\cap Q_R^+}\nonumber\\
&\le N\kappa^{-\frac{d+2}{q_0}} \Big[ (|f|^{q_0})_{Q_{\kappa r+R_1/2}^+(z_0)\cap Q_{R+R_1/2}^+}^{\frac 1 {q_0}}
+(|Dg|^{q_0})_{Q_{\kappa r+R_1/2}^+(z_0)\cap Q_{R+R_1/2}^+}^{\frac 1 {q_0}} \label{eq9.19}
\\   & \qquad  + R^{-1}_1(|Du|^{q_0})_{Q_{\kappa r+R_1/2}^+(z_0)\cap Q_{R+R_1/2}^+}^{\frac 1 {q_0}} \Big] +   N
\kappa^{-\frac{d+2}{q_1}} \delta^{\frac 1 {q_0}-\frac 1 {q_1}}(|D^2u|^{q_1})_{Q_{\kappa r+R_1/2}^+(z_0)\cap Q_{R+R_1/2}^+}^{\frac 1 {q_1}},\nonumber
\end{align}
where we used Corollary \ref{cor4.8} and $R_1/10\le r$ in the last inequality.

Now, we take $\cX=Q_R^+$ and define the dyadic sharp function $(D\omega)^\#_{\text{dy}}$ of $D\omega$ in $\cX$. From the above two cases, we conclude that for any $z_0\in \cX$,
\begin{align*}
&(D\omega)_{\text{dy}}^{\#}(z_0)    \leq N \kappa^{-\frac{d+2}{q_0}} \Big[ \mathcal{M}(I_{Q_{R+R_1/2}^+}(|f|+|Dg|)^{q_0})^{\frac 1 {q_0}}(z_0)+R_1^{-1}\mathcal{M}(I_{Q_{R+R_1/2}^+}|Du|^{q_0})^{\frac 1 {q_0}}(z_0)\Big] \\
& \quad + N\Big(\kappa^{-\frac{d+2}{q_0}} \delta^{ \frac 1 {q_0}-\frac 1 {q_1}}+ \kappa^{\frac 1 2}\Big) \mathcal{M}(I_{Q_{_{R+R_1/2}}^+}|D^2u|^{q_1})^{\frac 1 {q_1}} (z_0).
\end{align*}
Indeed, by the properties in \cite[Theorem 2.1]{MR3812104}, for any element $Q_\alpha^n$ in the partitions of $Q_R^+$, there exists $z_0 = (t_0,x_0)$ such that $-R^2 + (\kappa r)^2/2 \leq t_0 \leq 0$ and
$$
Q_\alpha^n \subset Q_{\kappa r}^+(z_0) \cap Q_R^+,
$$
where the volumes of $Q_\alpha^n$ and $Q_{\kappa r}^+(z_0)$ are comparable. Recalling that $1<q_0<q_1<\min\{s,q\}$, by Lemma \ref{mixed-norm-lemma} and the Hardy--Littlewood maximal function theorem in mixed-norm spaces (see, for instance, \cite[Corollary 2.6]{MR3812104}),
\begin{align*}
&\norm{D\omega}_{L_{s,q}(Q_R^+)}
\leq N\Big[ \norm{(D\omega)_{\text{dy}}^{\#}}_{L_{s,q}
(Q_R^+)} + R^{\frac 2 s+\frac d q}(|D\omega|)_{Q_R^+} \Big]\\
& \leq  N \kappa^{-\frac{d+2}{q_0}} \norm{\mathcal{M}(I_{Q_{_{R+R_1/2}}^+}(|f|+|Dg|)^{q_0})^{\frac 1 {q_0}}}_{L_{s,q}(\bR^{d+1})} +N\kappa^{-\frac{d+2}{q_0}} R_1^{-1}\norm{\mathcal{M}(I_{Q_{_{R+R_1/2}}^+}|Du|^{q_0})^{\frac 1 {q_0}}}_{L_{s,q}(\bR^{d+1})}\\
&\quad+N\Big(\kappa^{-\frac{d+2}{q_0}} \delta^{\frac 1 {q_0}-\frac 1 {q_1}}+ \kappa^{\frac 1 2}\Big) \norm{\mathcal{M}(I_{Q_{_{R+R_1/2}}^+}|D^2u|^{q_1})^{\frac 1 {q_1}}}_{L_{s,q}(\bR^{d+1})} + NR^{\frac 2 s+\frac d q} (|D\omega|)_{Q_{R}^+}\\
&\le N \Big[ \kappa^{-\frac{d+2}{q_0}}\norm{f}_{L_{s,q}(Q_{R+R_1/2}^+)} + \kappa^{-\frac{d+2}{q_0}} \norm{Dg}_{L_{s,q}(Q_{R+R_1/2}^+)}+\kappa^{-\frac{d+2}{q_0}} R_1^{-1}\norm{Du}_{L_{s,q}(Q_{R+R_1/2}^+)}  \\
& \quad +\Big(\kappa^{-\frac{d+2}{q_0}} \delta^{\frac 1 {q_0}-\frac 1 {q_1}} + \kappa^{\frac 1 2}\Big) \norm{D^2u}_{L_{s,q}(Q_{R+R_1/2}^+)}+  R^{\frac 2 s+\frac d q} (|D\omega|)_{Q_{R}^+}\Big].
\end{align*}
Similar to \eqref{eq9.19}, by Corollary \ref{cor4.8}, the last term on the right-hand side above is bounded by
\begin{align*}
&NR^{\frac 2 s+\frac d q}\Big[ (|f|^{q_0})_{Q_{R+R_1/2}^+}^{\frac 1 {q_0}}
+(|Dg|^{q_0})_{Q_{R+R_1/2}^+}^{\frac 1 {q_0}}
+ R^{-1}_1(|Du|^{q_0})_{Q_{R+R_1/2}^+}^{\frac 1 {q_0}}+\delta^{\frac 1 {q_0}-\frac 1 {q_1}}(|D^2u|^{q_1})_{Q_{R+R_1/2}^+}^{\frac 1 {q_1}}\Big] \\
&\le N\Big[\norm{f}_{L_{s,q}(Q_{R+R_1/2}^+)} + \norm{Dg}_{L_{s,q}(Q_{R+R_1/2}^+)}
+R_1^{-1}\norm{Du}_{L_{s,q}(Q_{R+R_1/2}^+)}
+\delta^{\frac 1 {q_0}-\frac 1 {q_1}}\norm{D^2u}_{L_{s,q}(Q_{R+R_1/2}^+)}\Big],
\end{align*}
where we used H\"older's inequality in the last line.
Combining the two inequalities above, we obtain \eqref{eq1.39n}.

Next, we prove \eqref{eq1.40n}.
Since $u$ satisfies \eqref{eq10.53} in $Q_{R+R_1}^+$ with $g$ in place of $\tilde g$ and with either the zero Dirichlet or Neumann boundary condition, by the boundary mixed-norm Sobolev estimate for non-divergence form elliptic equations (cf. \cite{MR3812104}), we have
$$
\|D^2u \|_{L_{s,q}(Q_{R}^+)}
\le N\|D\omega\|_{L_{s,q}(Q_{R+R_1/2}^+)} + N\|Dg\|_{L_{s,q}(Q_{R+R_1/2}^+)}  +N R^{-2}_1\|u\|_{L_{s,q}(Q_{R+R_1/2}^+)}.
$$
Replacing $u_i$ with $u_i-[u_i]_{B_{R+R_1/2}^+}(t)$ for $i=1,\ldots, d-1$ and using the interior and boundary Poincar\'e inequality, we infer that
\begin{equation}
                           \label{eq1.52n}
\|D^2u \|_{L_{s,q}(Q_{R}^+)}
\le N\|D\omega\|_{L_{s,q}(Q_{R+R_1/2}^+)} + N\|Dg\|_{L_{s,q}(Q_{R+R_1/2}^+)}+N R^{-1}_1\|Du\|_{L_{s,q}(Q_{R+R_1/2}^+)}.
\end{equation}
Combining \eqref{eq1.52n} and \eqref{eq1.39n} with $R+R_1/2$ in place of $R$, we obtain \eqref{eq1.40n}.
The lemma is proved.
\end{proof}

Now we are ready to give the proof of Theorem \ref{thm2.3b}.

\begin{proof}[Proof of Theorem \ref{thm2.3b}]
For $k=1,2,\ldots$, we denote $Q^k=(-(1-2^{-k})^2,0)\times B_{1-2^{-k}}^+$. Let $k_0$ be the smallest positive integer such that $2^{-k_0-1}\le R_0$. For $k\ge k_0$, we apply \eqref{eq1.40n} with $R=1-2^{-k}$ and $R_1=2^{-k-1}$ to get
\begin{align}
&\norm{D^2u}_{L_{s,q}(Q^{k})} \leq N \kappa^{-\frac {d+2} {q_0}}\norm{f}_{L_{s,q}(Q^{k+1})} + N \kappa^{-\frac {d+2} {q_0}}\norm{Dg}_{L_{s,q}(Q^{k+1})} \nonumber\\
                            \label{eq2.15n}
&\quad+N\Big(\kappa^{-\frac {d+2} {q_0}} \delta^{\frac 1 {q_0}-\frac 1 {q_1}} + \kappa^{\frac 1 2}\Big) \norm{D^2u}_{L_{s,q}(Q^{k+1})}+
N \kappa^{-\frac{d+2}{q_0}} 2^{k}\norm{Du}_{L_{s,q}(Q^{k+1})}.
\end{align}
From \eqref{eq2.15n} and the interpolation inequalities, we obtain
\begin{align}
&\norm{D^2u}_{L_{s,q}(Q^{k})} \leq N \kappa^{-\frac {d+2} {q_0}}\norm{f}_{L_{s,q}(Q^{k+1})} + N \kappa^{-\frac {d+2} {q_0}}\|Dg\|_{L_{s,q}(Q^{k+1})} \nonumber\\
                            \label{eq3.21}
&\quad+N\Big(\kappa^{-\frac {d+2} {q_0}} \delta^{\frac 1 {q_0}-\frac 1 {q_1}} + \kappa^{\frac 1 2}\Big) \norm{D^2u}_{L_{s,q}(Q^{k+1})}+
N \kappa^{-\frac 1 2-\frac {2(d+2)} {q_0}}2^{2k}\norm{u}_{L_{s,q}(Q^{k+1})},
\end{align}
where the constants $N$ above are independent of $k$.
We then take $\kappa$ sufficiently small and then $\delta$ sufficiently small so that
$$
N\Big(\kappa^{-\frac {d+2} {q_0}} \delta^{\frac 1 {q_0}-\frac 1 {q_1}} + \kappa^{\frac 1 2}\Big)\le 1/5.
$$
Finally, we multiply both sides of \eqref{eq3.21} by $5^{-k}$ and sum over $k=k_0,k_0+1,\ldots$ to obtain the desired estimate. The theorem is proved.
\end{proof}

\bibliographystyle{plain}

\def\cprime{$'$}

\end{document}